\newtheorem{DE}{Definition}[section]
\newcommand {\sm} {\setminus}
\newcommand{\qed}{\relax\ifmmode\hskip2em\Box\else\unskip\nobreak\hfill$\Box$\fi}
\newtheorem{theorem}[DE]{Theorem}
\newtheorem{lemma}[DE]{Lemma}
\newtheorem{CL}{Claim}
\theoremstyle{break}\theorembodyfont{\rmfamily}}
\theoremstyle{break}\theorembodyfont{\rmfamily}}
\newcounter{claim}
\newenvironment{claim}[1][]%
{\refstepcounter{claim}\vspace{1ex}\noindent{(\it\arabic{claim}){#1}{}}\it}{\rule[-2.5ex]{0em}{2.5ex}}
\newenvironment{proofclaim}[1][]%
	{\noindent {}{#1}{}}{ This proves~(\arabic{claim}).\vspace{1ex}}
 \newenvironment{proof}[1][]%
 {\noindent {\setcounter{claim}{0}\bf Proof ---
    }{#1}{}}{\hfill$\Box$\vspace{2ex}}
\begin{document}

\title{A structure theorem for graphs with no cycle with a unique
  chord and its consequences} \author{Nicolas
  Trotignon\thanks{CNRS, Universit\'e Paris 7, Paris Diderot, LIAFA, Case
    7014, 75205 Paris Cedex 13,   France.   E-mail:
    nicolas.trotignon@liafa.jussieu.fr.}~~and Kristina Vu\v skovi\'c\thanks{School of
    Computing, University of Leeds, Leeds LS2 9JT, UK. E-mail:
    vuskovi@comp.leeds.ac.uk.  Partially supported by EPSRC grant
    EP/C518225/1 and Serbian Ministry for Science and Technological Development grant 144015G.}}

\date{January 29, 2009}

\maketitle

\begin{abstract}
  We give a structural description of the class~$\cal C$ of graphs
  that do not contain a cycle with a unique chord as an induced
  subgraph.  Our main theorem states that any connected graph in $\cal
  C$ is either in some simple basic class or has a decomposition.
  Basic classes are chordless cycles, cliques, bipartite graphs with
  one side containing only nodes of degree two and induced subgraphs
  of the famous Heawood or Petersen graph. Decompositions are node
  cutsets consisting of one or two nodes and edge cutsets called
  1-joins. Our decomposition theorem actually gives a complete
  structure theorem for ${\cal C}$, i.e. every graph in ${\cal C}$ can
  be built from basic graphs that can be explicitly constructed, and
  gluing them together by prescribed composition operations; and all
  graphs built this way are in~${\cal C}$.

  This has several consequences: an ${\cal O}(nm)$-time algorithm to decide
  whether a graph is in~$\cal C$, an ${\cal O}(n+m)$-time algorithm that
  finds a maximum clique of any graph in $\cal C$ and an ${\cal O}(nm)$-time
  coloring algorithm for graphs in $\cal C$. We prove that every graph
  in $\cal C$ is either 3-colorable or has a coloring with $\omega$
  colors where $\omega$ is the size of a largest clique. The problem
  of finding a maximum stable set for a graph in $\cal C$ is known to
  be NP-hard.
\end{abstract}

\noindent AMS Mathematics Subject Classification: 05C17, 05C75, 05C85, 68R10

\noindent Key words: cycle with a unique chord, decomposition,
structure, detection, recognition, Heawood graph, Petersen graph,
coloring.

\section{Motivation}
\label{s:motiv}

In this paper all graphs are simple.  We give a structural
characterization of graphs that do not contain a cycle with a unique
chord as an induced subgraph. For the sake of conciseness we call
$\cal C$ this class of graph. Our main result, Theorem~\ref{th:1},
states that every connected graph in $\cal C$ is either in some simple
basic class or has a particular decomposition. Basic classes are
chordless cycles, cliques, bipartite graphs with one side containing
only nodes of degree two and graphs that are isomorphic to an induced
subgraph of the famous Heawood or Petersen graph. Our decompositions
are node cutsets consisting of one or two nodes or an edge cutset
called a 1-join. The definitions and the precise statement are given
in Section~\ref{s:main}. The proof is given in
Section~\ref{s:proof}. Both Petersen and Heawood graphs were
discovered at the end of the XIXth century in the research on the four
color conjecture, see~\cite{petersen:98} and~\cite{heawood:90}. It is
interesting to us to have them both as sporadic basic graphs.  Note
that our theorem works in two directions: a graph is in~$\cal C$ if
and only if it can be constructed by gluing basic graphs along our
decompositions (this is proved in Section~\ref{sec:st}). Such
structure theorems are stronger than the usual decomposition theorems
and there are not so many of them (see
\cite{chudnovsky.seymour:excluding} for a survey).  This is our first
motivation.

\rule{0em}{1ex}

Our structural characterization allows us to prove properties of
classical invariants. We prove in Section~\ref{s:bounding} that every
graph $G$ in~$\cal C$ satisfies either $\chi(G) = \omega(G)$ or
$\chi(G) \leq 3$ (where $\chi(G)$ denotes the chromatic number and
$\omega(G)$ denotes the size of a maximum clique). This is a
strengthening of the classical Vizing bound $\chi(G) \leq \omega(G)
+1$.  So this class of graphs belongs to the family of $\chi$-bounded
graphs, introduced by Gy\'arf\'as \cite{gyarfas} as a natural
extension of perfect graphs: a family of graphs ${\cal G}$ is {\em
  $\chi$-bounded} with {\em $\chi$-binding function} $f$ if, for every
induced subgraph $G'$ of $G \in {\cal G}$, $\chi (G') \leq f(\omega
(G'))$. A natural question to ask is: what choices of forbidden
induced subgraphs guarantee that a family of graphs is $\chi$-bounded?
Much research has been done in this area, for a survey see
\cite{rs}. We note that most of that research has been done on classes
of graphs obtained by forbidding a finite number of graphs. Since
there are graphs with arbitrarily large chromatic number and girth
\cite{erdos}, in order for a family of graphs defined by forbidding a
finite number of graphs (as induced subgraphs) to be $\chi$-bounded,
at least one of these forbidden graphs needs to be acyclic. Vizing's
Theorem \cite{vizing} states that for a simple graph $G$, $\Delta (G)
\leq \chi ' (G) \leq \Delta (G)+1$ (where $\Delta (G)$ denotes the
maximum vertex degree of $G$, and $\chi '(G)$ denotes the chromatic
index of $G$, i.e. the minimum number of colors needed to color the
edges of $G$ so that no two adjacent edges receive the same
color). This implies that the class of line graphs of simple graphs is
a $\chi$-bounded family with $\chi$-binding function $f(x)=x+1$. This
special upper bound for the chromatic number is called the {\em Vizing
  bound}.  We obtain the Vizing bound for the chromatic number by
forbidding a family of graphs none of which is acyclic.  Our result is
algorithmic: we provide an ${\cal O}(nm)$ algorithm that computes an
optimal coloring of every graph in~$\cal C$. Furthermore, it is easy
to see that there exists an ${\cal O}(n+m)$ algorithm that computes a
maximum clique for every graph in~$\cal C$; and it follows from a
construction of Poljak~\cite{poljak:74} that finding a maximum stable
set of a graph in~$\cal C$ is NP-hard (see
Section~\ref{s:cliquestable}).  All this is our second motivation.

\rule{0em}{1ex}

A third motivation is the detection of induced subgraphs. A
\emph{subdivisible graph} (\emph{s-graph} for short) is a triple $B =
(V, D, F)$ such that $(V, D \cup F)$ is a graph and $D \cap F =
\emptyset$.  The edges in $D$ are said to be \emph{real edges of $B$}
while the edges in $F$ are said to be \emph{subdivisible edges of
  $B$}.  A \emph{realisation} of $B$ is a graph obtained from $B$ by
subdividing edges of $F$ into paths of arbitrary length (at least
one).  The problem $\Pi_B$ is the decision problem whose input is a
graph $G$ and whose question is "Does $G$ contain a realisation of $B$
as an induced subgraph?''. In the discussion below, by ``detection
problem'', we mean ``problem $\Pi_B$ for some fixed s-graph $B$''.
This is restrictive since a lot of detection problems of great
interest (such as the detection of odd holes, where a \emph{hole} is
an induced cycle of length at least four) are not of that kind.

Let $H_{1|1}$ be the s-graph on nodes $a, b, c, d$ with real edges
$ab$, $ac$, $ad$ and subdivisible edges $bd$, $cd$. We also define for
$k, l \geq 1$ the s-graph $H_{k|l}$ obtained from $H_{1|1}$ by
subdividing the edge $ab$ into a path of length $k$ and the edge $ac$
into a path of length $l$.  See Fig.~\ref{f:sgraph} where real edges
are represented as straight lines and subdivisible edges as dashed
lines.  The question in Problem $\Pi_{H_{1|1}}$ can be rephrased as
``Does $G$ contain a cycle with a unique chord?''  or ``Is $G$ not
in~$\cal C$?''. The existence of a polynomial time algorithm was an
open question. A consequence of our structural description of~$\cal C$
is an ${\cal O}(nm)$-time algorithm for $\Pi_{H_{1|1}}$ (see
Section~\ref{sec:decomp}).  This is a solution to the recognition
problem for the class~$\cal C$ and it is interesting for reasons
explained below.

\begin{figure}
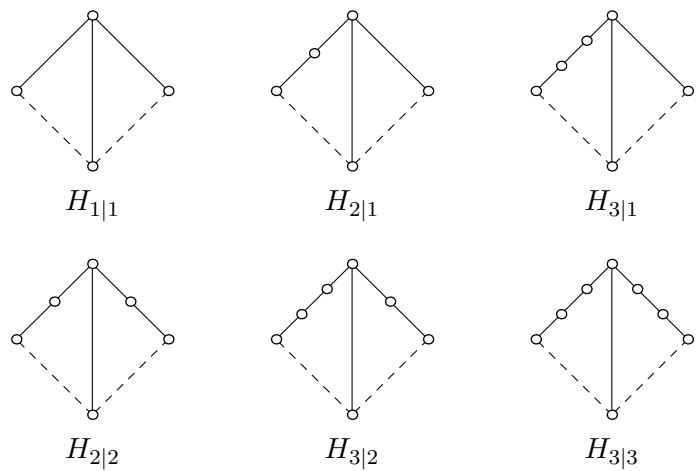

\center
\begin{tabular}{ccccc}
\includegraphics{sgraphs.1}&
\rule{1em}{0ex}&
\includegraphics{sgraphs.2}&
\rule{1em}{0ex}&
\includegraphics{sgraphs.4}\\
$H_{1|1}$&&
$H_{2|1}$&&
$H_{3|1}$
\\
\rule{0em}{1ex}&&&&\\
\includegraphics{sgraphs.3}&
\rule{1em}{0ex}&
\includegraphics{sgraphs.5}&
\rule{1em}{0ex}&
\includegraphics{sgraphs.6}\\
$H_{2|2}$&&
$H_{3|2}$&&
$H_{3|3}$
\end{tabular}
\caption{Some s-graphs\label{f:sgraph}}
\end{figure}

Several problem $\Pi_B$'s can be solved in polynomial time by
non-trivial algorithms (such as detecting pyramids in  
\cite{chudnovsky.c.l.s.v:reco} and thetas in
\cite{chudnovsky.seymour:theta}) and
others that may look similar at first glance are NP-complete (see
\cite{bienstock:evenpair}, \cite{maffray.t:reco}, and
\cite{leveque.lmt:detect} for a survey). A general criterion on an
s-graph that decides whether the related decision problem is
NP-complete or polynomial would be of interest. Our solution of
$\Pi_{H_{1|1}}$ gives some insight in the quest for such a criterion.

A very powerful tool for solving detection problems is the algorithm
three-in-a-tree of Chudnovsky and Seymour (see
\cite{chudnovsky.seymour:theta}). This algorithm decides in time
${\cal O}(n^4)$ whether three given nodes of a given graph $G$ are in an
induced tree of $G$. In~\cite{chudnovsky.seymour:theta}
and~\cite{leveque.lmt:detect} it is observed that every detection
problem $\Pi_B$ for which a polynomial time algorithm is known can be
solved easily by a brute force enumeration or by using
three-in-a-tree. But as far as we can see, three-in-a-tree cannot be
used to solve $\Pi_{H_{1|1}}$, so our solution of $\Pi_{H_{1|1}}$
yields the first example of a detection problem that does not fall
under the scope of three-in-a-tree.  Is there a good reason for that?
We claim that a polynomial time algorithm for $\Pi_{H_{1|1}}$ exists
thanks to what we call \emph{degeneracy}. Let us explain this. Every
statement that we give from here on to the end of the section is under
the assumption that P$\neq$NP.

Degeneracy has to deal with the following question: does putting
bounds on the lengths of the paths in realisations of an s-graph
affect the complexity of the related detection problem?  For upper
bounds, the answer can be found in previous research.  First, putting
upper bounds may turn the complexity from NP-complete to
polynomial. This follows from a simple observation: let $B$ be any
s-graph. A realisation of $B$, where the lengths of the paths arising
from the subdivisions of subdivisible edges are bounded by an integer
$N$, has a number of nodes bounded by a fixed integer $N'$ (that
depends only on $N$ and the size of $B$). So, such a realisation can
be detected in time ${\cal O}(n^{N'})$ by a brute force enumeration.  But
surprisingly, putting upper bounds in another way may also turn the
complexity from polynomial to NP-complete: in
\cite{chudnovsky.c.l.s.v:reco}, a polynomial time algorithm for
$\Pi_{K}$ is given, while in \cite{maffray.t.v:3pcsquare} it is proved
that $\Pi_{K'}$ is NP-complete, where $K, K'$ are the s-graphs
represented in Figure~\ref{f:pp}. Note that $\Pi_{K}$ is usually
called the pyramid (or 3PC($\Delta, \cdot$)) detection problem.

\begin{figure}
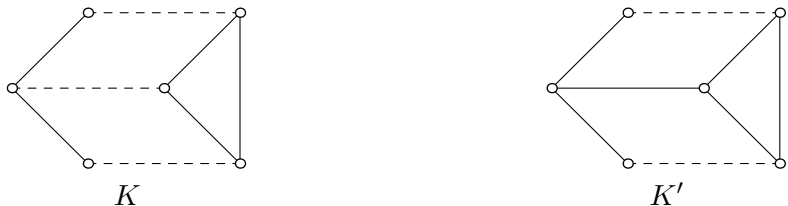

\center
\begin{tabular}{ccc}
\includegraphics{sgraphs.7}&
\rule{8em}{0ex}&
\includegraphics{sgraphs.8}\\
$K$&&
$K'$
\end{tabular}
\caption{Some s-graphs\label{f:pp}}
\end{figure}

Can putting lower bounds turn the complexity from polynomial to
NP-complete? Our recognition algorithm for $\cal C$ shows that the
answer is yes since in Section~\ref{s:bounds} we also prove that the
problem $\Pi_{H_{3|3}}$ is NP-complete. A realisation of $H_{3|3}$ is
simply a realisation of $H_{1|1}$ where every subdivisible edge is
subdivided into a path of length at least three. We believe that a
satisfactory structural description of the class~$\cal C'$ of graphs
that do not contain a realisation of $H_{3|3}$ is hopeless because
$\Pi_{H_{3|3}}$ is NP-complete. So why is there a decomposition
theorem for~$\cal C$~?  Simply because degenerate small graphs like
the \emph{diamond} (that is the cycle on four nodes with exactly one
chord) are forbidden in~$\cal C$, not in~$\cal C'$, and this helps a
lot in our proof of Theorem~\ref{th:1} (the decomposition theorem for
${\cal C}$). This is what we call the \emph{degeneracy} of the
class~$\cal C$. It is clear that degeneracy can help in solving
detection problems, and our results give a first example of this
phenomenon.

So the last question is: can putting lower bounds turn the
complexity from NP-complete to polynomial? We do not know the answer.
Also, we were not able to solve the following questions: what is the
complexity of the problems $\Pi_{H_{2|1}}$, $\Pi_{H_{3|1}}$,
$\Pi_{H_{2|2}}$ and~$\Pi_{H_{3|2}}$? The related classes of graphs
are not degenerate enough to allow us to decompose, and they are too
degenerate to allow us to find an NP-completeness proof.

\rule{0em}{1ex}

A fourth motivation is that our class ${\cal C}$ is related to well
studied classes.  It is a generalization of strongly balanceable
graphs, see \cite{ccv} for a survey.  A bipartite graph is
\emph{balanceable} if there exists a signing of its edges with $+1$
and $-1$ so that the weight of every hole is a multiple of~4.  A
bipartite graph is \emph{strongly balanceable} if it is balanceable
and it does not contain a cycle with a unique chord.  There is an
excluded induced subgraph characterization of balanceable bipartite
graphs due to Truemper \cite{truemper}. A \emph{wheel} in a graph
consists of a hole $H$ and a node $v$ that has at least three
neighbors in $H$, and the wheel is \emph{odd} if $v$ has an odd number
of neighbors in $H$. In a bipartite graph $G$, a \emph{3-odd-path
  configuration} consists of two nonadjacent nodes $u$ and $v$ that
are on opposite sides of the bipartition of $G$, together with three
internally node-disjoint $uv$-paths, such that there are no other
edges in $G$ among the nodes of the three paths. A bipartite graph is
balanceable if and only if it does not contain an odd wheel nor a
3-odd-path configuration \cite{truemper}. So a bipartite graph is
strongly balanceable if and only if it does not contain a 3-odd-path
configuration nor a cycle with a unique chord.

A bipartite graph is \emph{restricted balanceable} if there exists a
signing of its edges with $+1$ and $-1$ so that the weight of every
cycle is a multiple of~4. Conforti and Rao \cite{cr} show that a
strongly balanceable graph is either restricted balanceable or has a
1-join, which enables them to recognize the class of strongly
balanceable graphs (they decompose along 1-joins, and then directly
recognize restricted balanceable graphs).  A bipartite graph is
\emph{2-bipartite} if all the nodes in one side of the bipartition
have degree at most 2. Yannakakis \cite{yan} shows that a restricted
balanceable graph is either 2-bipartite or has a 1-cutset or a 2-join
consisting of two edges (this is an edge cutset that consists of two
edges that have no common endnode), and hence obtains a linear time
recognition algorithm for restricted balanceable graphs.

We note that the basic graphs from our decomposition theorem that do
not have any of our cutsets, and are balanceable, are in fact
2-bipartite.

Class ${\cal C}$ is contained in another well studied class of graphs,
the {\em cap-free} graphs (where a cap is a graph that consists of a
hole and a node that has exactly two neighbors on this hole, and
these two neighbors are adjacent)~\cite{cckv-cap}. In~\cite{cckv-cap}
cap-free graphs are decomposed with 1-amalgams (a generalization of a
1-join) into triangulated graphs and biconnected triangle-free graphs
together with at most one additional node that is adjacent to all
other nodes of the graph. This decomposition theorem is then used to
recognize strongly even-signable and strongly odd-signable graphs in
polynomial time, where a graph is {\em strongly even-signable} if its
edges can be signed with~0 and~1 so that every cycle of length $\geq
4$ with at most one chord has even weight and every triangle has odd
weight, and a graph is {\em strongly odd-signable} if its edges can be
signed with 0 and 1 so that cycles of length 4 with one chord are of
even weight and all other cycles with at most one chord are of odd
weight.

\section{The main theorem}
\label{s:main}

We say that a graph $G$ {\em contains} a graph $H$ if $H$ is
isomorphic to an induced subgraph of $G$.  A graph $G$ is {\em
  $H$-free} if it does not contain $H$.  For $S \subseteq V(G)$,
$G[S]$ denotes the subgraph of $G$ induced by $S$.  A \emph{cycle} $C$
in a graph $G$ is a sequence of nodes $v_1v_2\ldots v_nv_1$, that are
distinct except for the first and the last node, such that for $i=1,
\ldots ,n-1$, $v_iv_{i+1}$ is an edge and $v_nv_1$ is an edge (these
are the edges of $C$). An edge of $G$ with both endnodes in $C$ is
called a \emph{chord} of $C$ if it is not an edge of $C$. One can
similarly define a path and a chord of a path. In this paper we will
only use what is in literature known as chordless paths, so for the
convenience, in this paper (like in \cite{chudnovsky.r.s.t:spgt}) we
define a path as follows: a \emph{path} $P$ in a graph $G$ is a
sequence of distinct nodes $v_1\ldots v_n$ such that for $i=1, \ldots
,n-1$, $v_iv_{i+1}$ is an edge and these are the only edges of $G$
that have both endnodes in $\{ v_1, \ldots ,v_n \}$.  Such a path $P$
is also called a {\em $v_1v_n$-path}.  A {\em hole} is a chordless
cycle of length at least four.  A \emph{triangle} is a cycle of
length~3.  A \emph{square} is a hole of length~4.  A cycle in a graph
is \emph{Hamiltonian} is every node of the graph is in the cycle. Let
us define our basic classes:

The \emph{Petersen graph} is the graph on nodes $\{a_1, \dots a_5,
b_1, \dots, b_5\}$ so that $\{a_1, \dots, a_5\}$ and $\{b_1, \dots,
b_5\}$ both induce a $C_5$ with nodes in their natural order, and such
that the only edges between the $a_i$'s and the $b_i$'s are $a_1b_1$,
$a_2b_4$, $a_3b_2$, $a_4b_5$, $a_5b_3$. See Fig.~\ref{f:petersen}.

\begin{figure}[h]
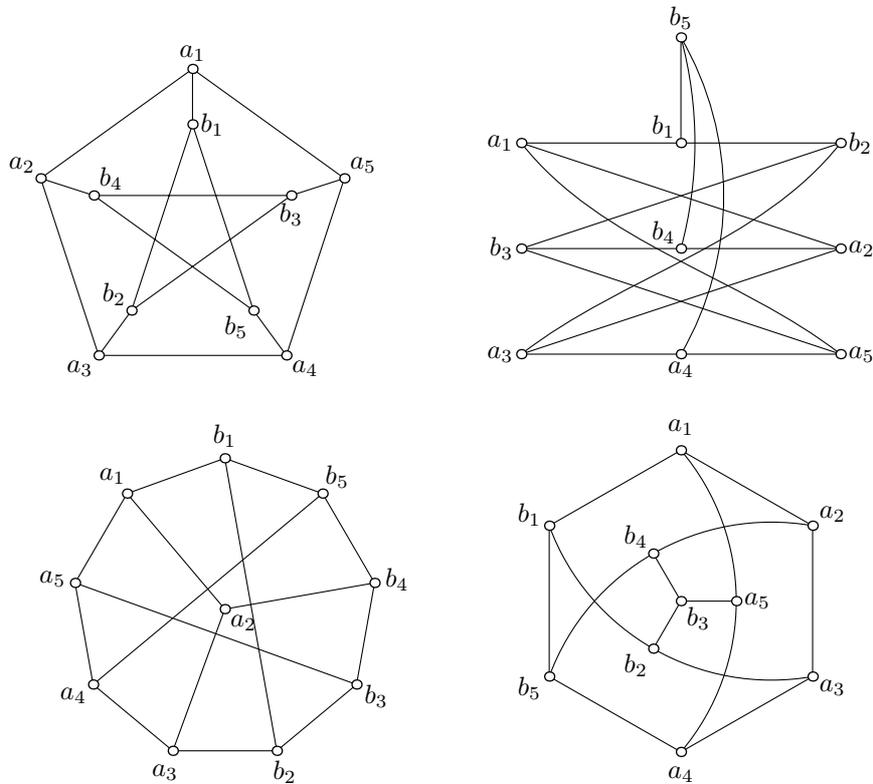

\begin{center}
  \includegraphics{figChord.4} \rule{3em}{0ex}
  \includegraphics{figChord.5}

 \rule{0em}{1ex}

  \includegraphics{figChord.6}  \rule{3em}{0ex}
  \includegraphics{figChord.7}
\end{center}
\caption{Four ways to draw the Petersen graph\label{f:petersen}}
\end{figure}

The \emph{Heawood graph} is the graph on  $\{a_1, \dots ,
a_{14}\}$ so that $\{a_1, \dots , a_{14}\}$ is a Hamiltonian cycle
with nodes in their natural order, and such that the only other edges
are $a_{1}a_{10}$, $a_{2}a_{7}$, $a_{3}a_{12}$, $a_{4}a_{9}$,
$a_{5}a_{14}$, $a_{6}a_{11}$, $a_{8}a_{13}$. See
Fig.~\ref{f:heawood}.

It can be checked that both Petersen and Heawood graph are in~$\cal
C$.  Note that since the Petersen graph and the Heawood graph are both
vertex-transitive, and are not themselves a cycle with a unique chord,
to check that they are in ${\cal C}$, it suffices to delete one node,
and then check that there is no cycle with a unique chord.  Also the
Petersen graph has girth~5 so a cycle with a unique chord in it must
contain at least 8 nodes.  The Heawood graph has girth~6 so a cycle
with a unique chord in it must contain at least 10 nodes.
For the Petersen graph, deleting a node yields an Hamiltonian graph,
and it is easy to check that it does not contain a cycle with a unique
chord.  For the Heawood graph, it is useful to notice that deleting
one node yields the Petersen graph with edges $a_1b_1,b_3b_4,a_3a_4$
subdivided.

\begin{figure}[h]
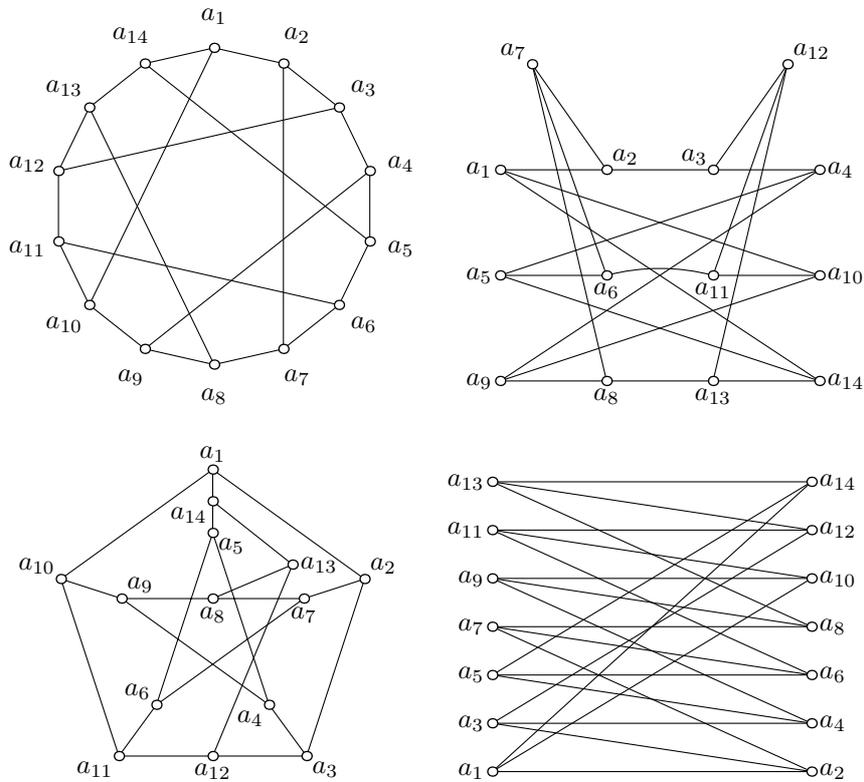

\begin{center}
  \includegraphics{figChord.1}  \rule{1em}{0ex}
  \includegraphics{figChord.2}

 \rule{0em}{1ex}

  \includegraphics{figChord.8} \rule{1em}{0ex}
  \includegraphics{figChord.10}
\end{center}
\caption{Four ways to draw the Heawood graph\label{f:heawood}}
\end{figure}

Let us define our last basic class.  A graph is \emph{strongly
  2-bipartite} if it is square-free and bipartite with bipartition
$(X,Y)$ where $X$ is the set of all degree 2 nodes of $G$ and $Y$ is
the set of all nodes of $G$ with degree at least~3.  A strongly
2-bipartite graph is clearly in $\cal C$ because any chord of a cycle
is an edge linking two nodes of degree at least three, so every cycle
in a strongly 2-bipartite graph is chordless.

We  now define cutsets used in our decomposition theorem:

\begin{itemize}
\item
  A \emph{1-cutset} of a connected graph $G$ is a node $v$ such that
  $V(G)$ can be partitioned into non-empty sets $X$, $Y$ and $\{ v
  \}$, so that there is no edge between $X$ and $Y$.  We say that $(X,
  Y,v)$ is a \emph{split} of this 1-cutset.

\item
  A \emph {proper 2-cutset} of a connected graph $G$ is a pair of
  non-adjacent nodes $a, b$, both of degree at least three, such that
  $V(G)$ can be partitioned into non-empty sets $X$, $Y$ and $\{ a,b \}$ so
  that: $|X|\geq 2$, $|Y| \geq 2$; there are no edges between $X$ and
  $Y$; and both $G[X \cup \{ a,b \}]$ and $G[Y \cup \{ a,b \}]$
  contain an $ab$-path.  We say that $(X, Y, a, b)$ is a \emph{split}
  of this proper 2-cutset.
  
\item
  A \emph{1-join} of a graph $G$ is a partition of $V(G)$ into sets
  $X$ and $Y$ such that there exist sets $A, B$ satisfying:

  \begin{itemize}
  \item
    $\emptyset \neq A \subseteq X$, $\emptyset \neq B \subseteq Y$;
  \item
  $|X| \geq 2$ and $|Y| \geq 2$;
  \item 
    there are all possible edges between $A$ and $B$;
  \item
    there are no other edges between $X$ and $Y$.
  \end{itemize}
  
  We say that $(X, Y, A, B)$ is a \emph{split} of this 1-join.  The
  sets $A, B$ are \emph{special sets} with respect to this 1-join.

  1-Joins were first introduced by Cunningham~\cite{cunningham:1join}.
  In our paper we will use a special type of a 1-join called a {\em
    proper 1-join}: a 1-join such that $A$ and $B$ are stable sets of
  $G$ of size at least two. Note that a square admits a proper 1-join.
\end{itemize}

Our main result is the following decomposition theorem:

\begin{theorem}
  \label{th:1}
  Let $G$ be a connected graph that does not contain a cycle with a
  unique chord. Then either $G$ is strongly 2-bipartite, or $G$ is a
  hole of length at least~7, or $G$ is a clique, or $G$ is an induced
  subgraph of the Petersen or the Heawood graph, or $G$ has a
  1-cutset, a proper 2-cutset, or a proper 1-join.
\end{theorem}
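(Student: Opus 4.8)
The plan is to prove this by assuming $G$ has no cutset of any of the three prescribed types, and then showing under this hypothesis that $G$ must lie in one of the four basic classes. So the overall structure is a long case analysis that extracts structure from the absence of decompositions. The first step is to dispose of the easy configurations: if $G$ is a clique, a hole, or very small, we are done directly, so I would assume $G$ contains a hole $H$ of length at least~$4$ and that $G$ is not itself a hole or clique. Since the diamond (a square plus one chord, i.e.\ $K_4$ minus an edge) is an induced cycle with a unique chord, $G$ is diamond-free; this is the degeneracy the introduction emphasizes, and I would use it constantly to control how a vertex can attach to a hole or a clique.

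The heart of the argument is an analysis of how vertices outside a fixed hole $H$ can attach to it. For a single vertex $v$ with neighbours on $H$, the $H$-free and diamond-free conditions force the neighbourhood $N(v)\cap V(H)$ to be very restricted: it cannot contain exactly one edge of $H$ together with a nonadjacent vertex (that would build a cycle with a unique chord), and it cannot be a path of length~$\geq 2$ on $H$ (diamond or longer unique-chord cycle). So each attachment is either a single vertex, two nonadjacent vertices, an edge, or the full hole; I would classify vertices into these \emph{types} and then study how types of vertices interact with each other through the edges of $G$. The goal of this local analysis is to show that if no decomposition exists, the attachments are globally coherent enough to force one of the basic structures. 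In particular, the absence of a $1$-cutset lets me assume $G$ is $2$-connected, and the absence of a proper $2$-cutset rules out pairs of vertices through which the graph separates, which is exactly what prevents the local ``wheel-like'' or ``path-like'' configurations from being glued along small cutsets.

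From there the argument would split according to $\omega(G)$. If $G$ contains a triangle, I would use diamond-freeness to show triangles organize into cliques and that the clique structure, together with no $1$-join and no $2$-cutset, pushes $G$ toward being a clique outright or exhibits a forbidden configuration. If $G$ is triangle-free, then every chord of every cycle joins two vertices and every cycle that is not a hole must have at least two chords; combined with $2$-connectivity and the absence of the cutsets, I expect this to force either the strongly $2$-bipartite structure (when degree-$2$ vertices form one stable side) or, in the sporadic highly symmetric cases where no side of degree-$2$ vertices exists and the girth is large, one of the Petersen or Heawood graphs. The identification of these two sporadic graphs is where I would lean on the girth remarks in the excerpt (girth~$5$ forces any unique-chord cycle to have $\geq 8$ nodes, girth~$6$ forces $\geq 10$), pinning down the finite exceptional graphs by a bounded search once the degree and girth constraints are established.

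The main obstacle, I expect, is precisely the emergence of the Petersen and Heawood graphs: these are not produced by any generic decomposition argument but arise as rigid, highly connected, vertex-transitive objects that the decompositions simply cannot reach. The difficulty is to show that a $2$-connected, diamond-free, triangle-free graph in $\cal C$ with no proper $2$-cutset and no proper $1$-join, in which some vertex has degree $\geq 3$ and which is not strongly $2$-bipartite, is forced into one of exactly these two graphs. This will require a careful, fairly delicate reconstruction: fix a shortest cycle, analyze attachments of all remaining vertices subject to the girth lower bound, and argue that the only way to add vertices without creating a unique chord, a diamond, or a small cutset is the specific incidence pattern of Petersen or Heawood. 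I would anticipate that most of the length and the real work of the proof lives in this final rigidity step rather than in the routine attachment lemmas.
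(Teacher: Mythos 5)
Your overall strategy (assume no decomposition, analyse attachments to a hole, exploit diamond-freeness, split on the presence of a triangle) matches the paper's in outline, and your sketch of the triangle case is essentially Theorem~\ref{th:triangle}. But there are two genuine gaps. First, you never say how squares are disposed of. Every non-decomposition outcome of the theorem is square-free (strongly 2-bipartite graphs are square-free by definition, as are cliques, holes of length at least~7, and the Petersen and Heawood graphs), so if $G$ contains a square you must \emph{produce} a cutset, and the only one that can do the job is the proper 1-join: the paper grows a square into a maximal pair $(A,B)$ of stable sets with all edges between them and shows that every component of $G\setminus(A\cup B)$ attaches to only one side, which is the content of Theorem~\ref{th:square}. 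Your plan assumes the absence of a proper 1-join but never uses a square to derive one, so the square-containing case is simply unhandled; note also that the square itself is not in any basic class, so it cannot be absorbed into the ``easy small cases.''

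Second, your mechanism for pinning down the Petersen and Heawood graphs --- ``a bounded search once the degree and girth constraints are established,'' starting from a shortest cycle --- has no engine. Triangle-freeness, square-freeness, $2$-connectedness and a girth lower bound do not bound the order of a graph (there are arbitrarily large $3$-regular graphs of girth $5$ and $6$, and the strongly 2-bipartite graphs themselves are arbitrarily large), so there is no finite search space; the finiteness of the exceptional graphs is a conclusion of the proof, not an input to it. What the paper actually does is exclude, in a carefully ordered cascade, a sequence of configurations (a subdivided Petersen-minus-an-induced-matching, a ``hexagonal'' three-path configuration, an ``octagonal'' four-path configuration, a five-path configuration, then cycles with exactly two, exactly three, and finally any positive number of chords, then a ``basket'' and the tree $I$). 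For each configuration: if one of its paths is long, the configuration already yields a proper 2-cutset at the ends of that path; if all paths are short, it is literally the Petersen or Heawood graph minus a few nodes, and those two graphs are dispatched at the outset by showing that any component outside them attaches to at most one node, giving a 1-cutset; and any outside attachment either recreates an earlier forbidden configuration or again yields a proper 2-cutset. The sporadic graphs thus emerge as completions of intermediate configurations, and the final step is not a search but the observation that, with all these configurations excluded, every branch of $G$ has length exactly two, i.e.\ $G$ is strongly 2-bipartite. Without some substitute for that cascade, your ``rigidity step'' is a restatement of the theorem rather than a proof of it.
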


The following intermediate results are proved in the next
section. Theorem~\ref{th:1} follows from Theorems~\ref{th:square}
and~\ref{th:nosquare} (more precisely, it follows
from~\ref{th:nosquare} for square-free graphs, and
from~\ref{th:square} for graphs that contain a square).

\begin{theorem}
  \label{th:triangle}
  Let $G$ be a connected graph that does not contain a cycle with a
  unique chord. If $G$ contains a triangle then either $G$ is a
  clique, or one node of the maximal clique that contains this
  triangle is a 1-cutset of $G$.
\end{theorem}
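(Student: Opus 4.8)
The plan is to let $K$ be the maximal clique containing the given triangle. Since a \emph{diamond} (a $4$-cycle with exactly one chord) is itself a cycle with a unique chord, the graph $G$ is diamond-free; consequently every edge lies in a \emph{unique} maximal clique (two maximal cliques sharing an edge $uv$ but differing would give vertices $a,c$ adjacent to both $u,v$ with $a\not\sim c$, a diamond), so ``the maximal clique that contains this triangle'' is well defined and $|K|\geq 3$. If $V(G)=K$ then $G$ is a clique and we are done, so assume $V(G)\neq K$; as $G$ is connected there is then at least one edge between $K$ and $V(G)\setminus K$. The whole argument rests on understanding how the vertices outside $K$ attach to $K$.

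First I would record the basic attachment fact: every vertex $v\notin K$ has \emph{at most one} neighbor in $K$. Indeed, since $K$ is maximal, $v$ has a non-neighbor $k_l\in K$; if $v$ had two neighbors $k_i,k_j\in K$, then $v\,k_i\,k_l\,k_j\,v$ would be a $4$-cycle whose only chord is $k_ik_j$ (the edge $vk_l$ being absent), i.e. a cycle with a unique chord, a contradiction.

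The heart of the proof is the claim that \emph{each connected component $C$ of $G-K$ has all of its $K$-neighbors at a single vertex of $K$}. Suppose not: then $C$ contains $a\sim k_i$ and $b\sim k_j$ with $i\neq j$, and some induced path in $C$ joins them. Among all induced paths in $C$ whose two endpoints have neighbors at two \emph{distinct} vertices of $K$, choose one, $P=p_1\cdots p_s$, of minimum length, with $p_1\sim k_i$ and $p_s\sim k_j$ (note $s\geq 2$ by the one-neighbor fact). Minimality forces every internal vertex of $P$ to have no neighbor in $K$: an internal $p_r$ adjacent to some $k\in K$ would give a strictly shorter admissible subpath, namely $p_1\cdots p_r$ when $k\neq k_i$, and $p_r\cdots p_s$ when $k=k_i$ (then $k\neq k_j$). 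Now pick a third vertex $k_l\in K\setminus\{k_i,k_j\}$, which exists because $|K|\geq 3$. Then $D=k_l\,k_i\,p_1\cdots p_s\,k_j\,k_l$ is a cycle whose \emph{unique} chord is $k_ik_j$: the path $P$ is induced, $k_l$ has no neighbor among the $p_r$'s, $k_i$ (resp.\ $k_j$) is adjacent among the $p_r$'s only to $p_1$ (resp.\ $p_s$), and the only edge inside $\{k_i,k_j,k_l\}$ that is not already an edge of $D$ is $k_ik_j$. This contradicts $G\in\mathcal{C}$ and proves the claim. \textbf{This component-attachment claim is the main obstacle}; its delicate point is the minimal-path selection, which guarantees a chordless interior so that the third clique vertex $k_l$ creates exactly one chord.

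Finally I would assemble the $1$-cutset. By connectivity every component of $G-K$ attaches to $K$, hence (by the claim) to exactly one vertex of $K$. For $k\in K$ let $\mathcal{D}_k$ be the union of the components attached to $k$; since $V(G)\neq K$, some $\mathcal{D}_k$ is non-empty, and I fix such a $k$. Vertices of $\mathcal{D}_k$ have no neighbor in $K\setminus\{k\}$ and no neighbor in any $\mathcal{D}_{k'}$ with $k'\neq k$ (distinct components of $G-K$ are non-adjacent), so there is no edge between $\mathcal{D}_k$ and $\bigl(K\setminus\{k\}\bigr)\cup\bigcup_{k'\neq k}\mathcal{D}_{k'}$. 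As $|K|\geq 3$ both of these sets are non-empty, so $\bigl(\mathcal{D}_k,\ (K\setminus\{k\})\cup\bigcup_{k'\neq k}\mathcal{D}_{k'},\ k\bigr)$ is a split of a $1$-cutset, and $k$ is a node of the maximal clique containing the triangle, as required.
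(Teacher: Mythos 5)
Your proof is correct and follows essentially the same route as the paper: both take a minimal connected piece of $G\setminus K$ attaching to two distinct clique vertices (necessarily an induced path), rule out the single-vertex case via the diamond, and then use a third vertex of the clique to produce a cycle whose unique chord is the edge between the two attachment vertices. The only cosmetic differences are that you isolate the ``at most one neighbor in $K$'' fact up front and verify the unique chord directly, where the paper folds the first into a length-zero case and invokes Lemma~\ref{c:neighHole} for the second.
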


\begin{theorem}
  \label{th:square}
  Let $G$ be a connected graph that does not contain a cycle with a unique
  chord. Suppose that $G$ contains either a square, the Petersen graph
  or the Heawood graph.  Then either $G$ is the Petersen graph or $G$
  is the Heawood graph or $G$ has a 1-cutset or a proper 1-join.
\end{theorem}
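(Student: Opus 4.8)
The plan is to first dispose of the triangle case, then reduce the Petersen/Heawood hypotheses to the square case, and finally treat the square case, which carries the real content.

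\textbf{Reduction to triangle-free graphs.} First suppose $G$ contains a triangle. By Theorem~\ref{th:triangle}, either $G$ is a clique or some node of a maximal clique is a $1$-cutset of $G$. A clique contains neither a square, nor the Petersen graph, nor the Heawood graph (all of which are triangle-free), so $G$ is not a clique and we obtain a $1$-cutset. Hence from now on we may assume that $G$ is triangle-free. Note the consequence we shall use repeatedly: for any node $v$ and induced subgraph $H$, the set $N(v)\cap V(H)$ is stable in $H$, and two non-adjacent nodes sharing two common neighbours, together with those neighbours, induce a square.

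\textbf{Reducing the Petersen and Heawood cases.} Suppose $G$ contains an induced subgraph $P$ isomorphic to the Petersen graph (the Heawood case is analogous, using its girth $6$). If $V(G)=V(P)$ then $G$ is the Petersen graph and we are done, so assume there is a node $v\notin V(P)$, which we may take to have a neighbour in $V(P)$ since $G$ is connected. If $v$ has two neighbours $x,y$ in $P$, then (as $G$ is triangle-free) $x,y$ are non-adjacent, so in $P$ they have a common neighbour $z$; since $vxz$ and $vyz$ cannot be triangles, $v\not\sim z$ and $\{x,z,y,v\}$ induces a square. Thus as soon as some external node has two neighbours in $P$, the graph $G$ contains a square and we are reduced to the square case below. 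Otherwise every node outside $P$ has at most one neighbour in $P$; in that situation I would show, using the high symmetry and girth of the Petersen graph, that two distinct attachment nodes of $P$ joined through the rest of the graph would create a cycle with a unique chord, so that in fact all edges leaving $P$ meet $V(P)$ in a single node, which is then a $1$-cutset of $G$. Either way the conclusion holds, so it remains to handle graphs that contain a square.

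\textbf{The square case.} Assume $G$ is triangle-free and contains a square. The natural object is a maximal induced complete bipartite subgraph, but partial attachments must be allowed: adding to $K_{3,3}$ with sides $\{a_1,a_2,a_3\}$ and $\{b_1,b_2,b_3\}$ a node $v$ adjacent only to $a_1,a_2$ keeps the graph in ${\cal C}$, and its proper $1$-join has special sets $\{a_1,a_2\}$ and $\{v,b_1,b_2,b_3\}$. Guided by this, I would fix a pair of non-adjacent nodes $a,a'$ whose set of common neighbours $B=N(a)\cap N(a')$ has size at least two (such a pair exists inside the square), chosen with $|B|$ as large as possible, and set $A=\{u : B\subseteq N(u)\}$, the set of nodes complete to $B$. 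Both $A$ and $B$ are stable by triangle-freeness, they are complete to each other by construction, and $a,a'\in A$ with $|A|,|B|\ge 2$; this is the candidate special pair of a proper $1$-join. I would then classify every remaining node $w$ by its neighbourhood in $A\cup B$: triangle-freeness forbids $w$ from having neighbours in both $A$ and $B$, and the exclusion of cycles with a unique chord, together with the maximality in the choice of $B$, should force the edges from $w$ into $A\cup B$ to lie on a single side and to be compatible with a partition $(X,Y)$ with $A\subseteq X$, $B\subseteq Y$ having no edges between $X$ and $Y$ other than those between $A$ and $B$. Reading off this partition yields the desired proper $1$-join, unless the analysis instead exhibits a cutnode, giving a $1$-cutset.

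\textbf{Main obstacle.} The hard part is this last step: controlling the attachments of arbitrary nodes to the complete bipartite core and proving that the cut between the two sides is clean. Concretely, given a node $w$ attached to the $A$-side, one must rule out a $w$-path reaching $B$ (equivalently, an $X$--$Y$ edge bypassing the special sets), since any such path closes into a cycle whose $A$--$B$ edges act as chords, and one must argue that avoiding a \emph{unique} such chord is impossible unless the path is absent. Making the choice of the maximal core precise so that this case analysis closes, while simultaneously settling the single-attachment subcase for Petersen and Heawood, is where the bulk of the work lies.
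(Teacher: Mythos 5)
Your overall strategy is the same as the paper's: reduce to triangle-free graphs via Theorem~\ref{th:triangle}, grow a maximal complete bipartite core $(A,B)$ out of the square and show it yields a proper 1-join, and handle the Petersen and Heawood copies by showing that each component of the rest attaches to a single node, giving a 1-cutset. Your choice of core (the common neighbourhood $B$ of a non-adjacent pair with $|B|$ maximum, and $A$ the set of all nodes complete to $B$) differs in detail from the paper's (which takes $A\cup B$ maximal as a set among pairs of disjoint stable sets with all edges between them and both sides of size at least two), but both choices deliver the one fact the argument needs: a node outside $A\cup B$ with a neighbour in $A$ must have a non-neighbour in $A$, and symmetrically for $B$.

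Two genuine gaps remain. First, the heart of the square case --- that every component of $G\setminus(A\cup B)$ attaches to only one of $A$, $B$ --- is exactly the step you defer as ``the main obstacle''. It closes in a few lines: take a minimal connected $D=u\dots v$ in $G\setminus(A\cup B)$ with $ua\in E(G)$, $a\in A$, $vb\in E(G)$, $b\in B$; triangle-freeness gives $u\neq v$ and forbids $u$ (resp.\ $v$) from having any neighbour in $B$ (resp.\ $A$); maximality supplies a non-neighbour $a'\in A$ of $u$ and a non-neighbour $b'\in B$ of $v$; and then $D\cup\{a,b,a',b'\}$ induces a cycle whose unique chord is $ab$. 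Without this construction the proposal does not prove the theorem. Second, your reduction of the Heawood case is not ``analogous'' to the Petersen case: in the Petersen graph any two non-adjacent vertices have a common neighbour (it is strongly regular with $\mu=1$), so an external node with two neighbours does force a square; but in the Heawood graph two vertices at distance three (for instance $a_1$ and $a_4$) have no common neighbour, so an external node adjacent to both creates neither a triangle nor a square. Your dichotomy ``two neighbours $\Rightarrow$ square, else at most one neighbour'' therefore fails for the Heawood graph, and this configuration has to be excluded directly by exhibiting a cycle with a unique chord, which is what the paper's explicit case analysis for attachments to the Heawood copy does.
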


\begin{theorem}
  \label{th:nosquare}
  Let $G$ be a connected square-free graph that does not contain a
  cycle with a unique chord. Then either $G$ is strongly 2-bipartite,
  or $G$ is a hole of length at least~7, or $G$ is a clique or $G$ is
  an induced subgraph of the Petersen or the Heawood graph, or $G$ has
  a 1-cutset or a proper 2-cutset.
\end{theorem}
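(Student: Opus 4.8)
The plan is to dispatch the easy structural cases first, reducing to a highly constrained ``core'', and then to prove on that core the dichotomy \emph{strongly 2-bipartite versus proper 2-cutset}. If $G$ has a cutvertex we have a 1-cutset and are done, so assume $G$ is 2-connected. If $G$ contains a triangle, then by Theorem~\ref{th:triangle} either $G$ is a clique or $G$ has a 1-cutset, and we are done; so assume $G$ is triangle-free. If $G$ contains the Petersen or the Heawood graph, apply Theorem~\ref{th:square}: its conclusion offers a 1-cutset, a proper 1-join, or that $G$ is Petersen/Heawood. But a proper 1-join is impossible here, because its four special vertices $a_1,a_2 \in A$ and $b_1,b_2 \in B$ (stable sets, all edges present across) induce the square $a_1b_1a_2b_2$, contradicting square-freeness. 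Hence $G$ is Petersen/Heawood or has a 1-cutset, and we are done; so assume $G$ contains neither. Now $G$ has girth at least~$5$. If every vertex has degree~$2$ then $G$ is a cycle, i.e.\ a hole of length at least~$5$; this is $C_5$ or $C_6$ (an induced subgraph of the Petersen resp.\ Heawood graph) or a hole of length at least~$7$. So from now on $G$ is 2-connected of girth at least~$5$, contains no Petersen/Heawood, and has a vertex of degree at least~$3$.

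The main tool I would establish is the following \emph{attachment lemma}: for any hole $H$ and any $v \notin V(H)$, $v$ has at most two neighbors on $H$, and if exactly two then they lie at distance at least~$3$ along $H$. Indeed, if $v$ has two neighbors $x,y$ on $H$ at distance~$2$, with common $H$-neighbor $w$ between them, then $vxwy$ is a $4$-cycle whose only possible chords are $xy$ and $vw$; the edge $xy$ is absent since $H$ has girth at least~$5$, and $vw$ is absent since otherwise $vxw$ is a triangle; so $vxwy$ is a square, a contradiction. If $v$ has at least three neighbors on $H$, pick three cyclically consecutive ones $n_1,n_2,n_3$ and let $A$ be the arc of $H$ from $n_1$ to $n_3$ through $n_2$ (containing no other neighbor of $v$). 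Then $C = v\,n_1\,A\,n_3\,v$ is a cycle on which the neighbors of $v$ are exactly $n_1,n_2,n_3$; since $A$ is chordless and $n_1,n_3$ are cycle-edges at $v$, the edge $vn_2$ is the \emph{unique} chord of $C$, a contradiction.

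Next I would extract the decompositions from long threads. Call a vertex of degree at least~$3$ a \emph{branch vertex} and a maximal path of internal degree-$2$ vertices a \emph{thread}; the endpoints of a thread are branch vertices. The key sub-claim is: if some thread has at least two internal vertices and its endpoints $a,b$ are non-adjacent, then $\{a,b\}$ is a proper 2-cutset. To see this, let $X$ be the interior of the thread; every vertex of $X$ has degree~$2$ with both neighbors inside the thread, so any path meeting $X$ runs along the whole thread. Thus $\{a,b\}$ separates $X$ from $Y := V(G) \sm (X \cup \{a,b\})$, with $|X| \ge 2$; the thread is an $ab$-path in $G[X \cup \{a,b\}]$; and since $G$ is 2-connected there are two internally disjoint $ab$-paths, at most one using $X$, so the other lies in $G[Y \cup \{a,b\}]$. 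As $a$ has degree at least~$3$ it has at least two neighbors off the thread, so $|Y| \ge 2$; finally $a,b$ are non-adjacent of degree at least~$3$. Hence we may assume every thread with non-adjacent endpoints has a single internal vertex. If moreover no two branch vertices are adjacent, then every degree-$2$ vertex is the lone interior of a length-$2$ thread joining two branch vertices, so every edge joins a degree-$2$ vertex to a degree-at-least-$3$ vertex; together with square-freeness this says exactly that $G$ is strongly 2-bipartite, and we are done.

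The remaining, and by far the hardest, case is therefore when two branch vertices $a,b$ are \emph{adjacent}. This is precisely the regime of the sporadic examples: the Petersen and Heawood graphs are essentially cubic, so every vertex is a branch vertex, adjacent branch vertices abound, and yet they are 2-connected with no proper 2-cutset — which is why they must appear as basic graphs. The hard part will be to show that, under the standing hypothesis of \emph{no proper 2-cutset}, the local structure forced around such a pair $a,b$ closes up in only finitely many ways. Triangle-freeness forces $N(a) \cap N(b) = \emptyset$ and makes the other neighbors of $a$ (and of $b$) pairwise non-adjacent; the attachment lemma then tightly controls how each newly reached vertex may see the holes already built, pushing all relevant intersections to distance at least~$3$. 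I expect this to require a delicate, case-heavy analysis in which one repeatedly applies the attachment lemma to track how vertices attach to a growing system of holes, at each step either exhibiting a proper 2-cutset or being forced to complete the configuration into one of the two sporadic graphs. Carrying out this degenerate, finite-exceptional analysis and proving that Petersen and Heawood are the only cutset-free survivors is the crux and the main obstacle of the proof.
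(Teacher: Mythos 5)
Your reductions at the start are sound and essentially match the paper's: triangles are dispatched by Theorem~\ref{th:triangle}, Petersen/Heawood containment by Theorem~\ref{th:square} (with the correct observation that a proper 1-join is impossible in a square-free graph), your attachment lemma is the paper's Lemma~\ref{c:neighHole} (with a distance refinement that girth~$\geq 5$ makes immediate), and your thread analysis --- long threads with non-adjacent ends give proper 2-cutsets, and if additionally no two branch vertices are adjacent then $G$ is strongly 2-bipartite --- is exactly the paper's concluding paragraph. But the proof is not complete: the entire difficulty of the theorem is concentrated in the case you leave open, namely when two vertices of degree at least~$3$ are adjacent. In a triangle-free, square-free graph such an edge $ab$ forces an induced copy of the graph $I$ (the vertex $a$ with two further neighbours $c,d$ and $b$ with two further neighbours $e,f$, all six distinct and with no other edges among them), and showing that this situation always yields a 1-cutset or a proper 2-cutset is where almost all of the work lies. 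Saying that you ``expect this to require a delicate, case-heavy analysis'' and that it ``is the crux and the main obstacle'' is an accurate diagnosis, but it is not a proof.

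Concretely, the paper closes this gap by a long chain of exclusion claims, each building on the previous ones: it first rules out three subdivided configurations related to the Petersen and Heawood graphs (Claims~\ref{c:Petersen3pc}, \ref{c:hexagonal3PC}, \ref{c:octogonal4PC}, \ref{c:5PC}), each time by showing that any attachment of the rest of the graph to the configuration either creates a cycle with a unique chord or forces a proper 2-cutset at the ends of one of the constituent paths; it then uses these to exclude cycles with exactly two chords, then exactly three chords, then \emph{any} chord at all (Claims~\ref{c:2chords}--\ref{c:Nchords}); and finally it excludes a ``basket'' configuration and the graph $I$ itself (Claims~\ref{c:basket}, \ref{c:noI}), the last step using Lemma~\ref{abedge} and the connectivity hypotheses to grow paths around the edge $ab$ until one of the previously excluded configurations appears. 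None of this is present in your proposal, and there is no evident shortcut: the Petersen and Heawood graphs really do arise as the cutset-free survivors of this analysis, so some mechanism that recognises and terminates at these sporadic graphs is unavoidable. Until you supply an argument for the adjacent-branch-vertices case, the proof stands incomplete at its central point.
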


\section{Proof of Theorems~\ref{th:triangle}, \ref{th:square} and \ref{th:nosquare}}
\label{s:proof} 

We first need two lemmas: 

\begin{lemma}
  \label{c:neighHole}
  Let $G$ be a graph in $\cal C$, $H$ a hole of $G$ and $v$ a
  node of $G\sm H$. Then $v$ has at most two neighbors in $H$, and
  these two neighbors are not adjacent.
\end{lemma}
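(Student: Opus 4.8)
The plan is to argue by contradiction: assuming that $v$ has more than two neighbors on $H$, or exactly two that are adjacent, I will exhibit a cycle of $G$ having a unique chord, contradicting $G\in{\cal C}$. Write $N$ for the set of neighbors of $v$ on $H$ and $k=|N|$. The nodes of $N$ cut $H$ into $k$ arcs, and for two cyclically consecutive members $n,n'$ of $N$ the arc $A$ of $H$ joining them that avoids the rest of $N$ is an induced subpath whose interior contains no neighbor of $v$. The basic building block is, for such a consecutive pair, the cycle going from $v$ to $n$, along $A$ to $n'$, and back to $v$: since $A$ is chordless and its interior misses $N$, this cycle is either a triangle or itself chordless. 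Hence a single arc never produces a unique chord, and I will always glue two consecutive arcs so that the node between them supplies exactly one chord $vn_i$.

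Assume first $k\ge 3$. I would take three cyclically consecutive neighbors $n_{i-1},n_i,n_{i+1}$ with gap-arcs $A_{i-1}$ (from $n_{i-1}$ to $n_i$) and $A_i$ (from $n_i$ to $n_{i+1}$), and form the cycle $D$ that runs from $v$ to $n_{i-1}$, along $A_{i-1}$ to $n_i$, along $A_i$ to $n_{i+1}$, and back to $v$. Then $vn_i$ is a chord of $D$. To see it is the \emph{unique} chord I check the only three possible sources: the part of $D$ lying on $H$ is a subpath of the chordless hole, so it contributes no chord; the interiors of $A_{i-1}$ and $A_i$ contain no neighbor of $v$, so $v$ sends no further chord; the single remaining candidate is the edge $n_{i-1}n_{i+1}$. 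Ruling this out is the step that needs care, and the one I expect to be the main obstacle: I must choose the triple so that \emph{both} arcs of $H$ between $n_{i-1}$ and $n_{i+1}$ have length at least two, whence $n_{i-1}n_{i+1}\notin E(H)$. When $k\ge 4$ the complementary arc already carries a neighbor of $v$ in its interior and so is automatically long; when $k=3$ the three arcs cannot all have length one (that would force $|H|=3$, contradicting that $H$ is a hole), so I pick the middle node $n_i$ to face an arc of length at least two. In either case $vn_i$ is the unique chord of $D$, a contradiction, so $k\le 2$.

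It remains to rule out $k=2$ with the two neighbors $a,b$ adjacent on $H$. Here I take $P$ to be the path obtained from $H$ by deleting the edge $ab$, an induced $ab$-path of length $|H|-1\ge 3$ whose interior is $V(H)\sm\{a,b\}$ and therefore, as $k=2$, contains no neighbor of $v$. The cycle running from $v$ to $a$, along $P$ to $b$, and back to $v$ then has $ab$ as a chord (it is an edge of $H$ not used by the cycle), and the same three-way check shows it has no other chord, a contradiction. Thus the two neighbors cannot be adjacent, which together with $k\le 2$ gives the claim. The only genuine difficulty throughout is the bookkeeping that forbids the spurious endpoint-chord $n_{i-1}n_{i+1}$ (respectively, that makes $ab$ a bona fide chord), and this is exactly what the arc-length argument above controls.
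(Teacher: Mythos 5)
Your proof is correct and takes essentially the same approach as the paper: for three or more neighbors, close up a subpath of $H$ containing exactly three neighbors of $v$ into a cycle whose unique chord is the edge to the middle neighbor, and for two adjacent neighbors, use $V(H)\cup\{v\}$ as a cycle with unique chord $ab$. You are in fact more careful than the paper's two-line argument, since you explicitly justify the choice of the consecutive triple that rules out the spurious chord $n_{i-1}n_{i+1}$, a point the paper leaves implicit.
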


\begin{proof}
  If $v$ has at least three neighbors in $H$, then $H$ contains a
  subpath $P$ with exactly three neighbors of $v$ and $V(P) \cup
  \{v\}$ induces a cycle of $G$ with a unique chord, a
  contradiction. If $v$ has two neighbors in $H$, they must be
  non-adjacent for otherwise $H\cup \{v\}$ is a cycle with a unique
  chord.
\end{proof}

In a connected graph $G$ two nodes $a$ and $b$ form a {\em 2-cutset}
if $G\setminus \{ a,b \}$ is disconnected.

\begin{lemma}\label{abedge}
  Let $G$ be a connected graph that has no 1-cutset.  If $\{ a,b\}$ is
  a 2-cutset of $G$ and $ab$ is an edge, then $G \not \in {\cal C}$.
\end{lemma}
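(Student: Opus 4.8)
The plan is to put $G$ outside ${\cal C}$ by exhibiting an explicit cycle with a unique chord, where the unique chord is the edge $ab$ itself. The body of the cycle will be made of two $ab$-paths running through two distinct components of $G\setminus\{a,b\}$, and the whole point of the hypothesis ``no 1-cutset'' is to guarantee that such paths exist and that $ab$ ends up being a genuine chord.

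First I would record a neighborhood claim. Since $\{a,b\}$ is a $2$-cutset, $G\setminus\{a,b\}$ has at least two components; fix two of them, $X$ and $Y$. I claim each of $a,b$ has a neighbor in $X$ and a neighbor in $Y$. Indeed, suppose $a$ had no neighbor in $X$. As $X$ is a component of $G\setminus\{a,b\}$, no edge of $G$ joins $X$ to any vertex outside $X\cup\{a,b\}$, so the only vertex outside $X$ adjacent to $X$ is $b$. Then $(X,\ (\{a\}\cup Y\cup\cdots),\ b)$ is a split of a $1$-cutset at $b$, contradicting the hypothesis. The remaining three cases are symmetric, each using that neither $a$ nor $b$ is a $1$-cutset.

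Next I would build the two paths. Inside $G[X\cup\{a,b\}]$ with the edge $ab$ deleted, take a shortest $ab$-path $P_X=a,x_1,\dots,x_k,b$; it exists because $X$ is connected and both $a$ and $b$ have neighbors in $X$, and since the direct edge $ab$ is unavailable we have $k\geq 1$. Being a shortest path, $P_X$ is induced in $G$ apart from the single extra edge $ab$. Likewise obtain a shortest $ab$-path $P_Y=a,y_1,\dots,y_l,b$ through $Y$ with $l\geq1$. Gluing them gives the cycle $C=a,x_1,\dots,x_k,b,y_l,\dots,y_1,a$. Because $X$ and $Y$ lie in different components of $G\setminus\{a,b\}$ there is no edge between the $x_i$'s and the $y_j$'s, and because each of $P_X,P_Y$ is induced (apart from $ab$) no chord survives within either half except $ab$. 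Hence $G[V(C)]$ is exactly $C$ plus the single chord $ab$, so $C$ is a cycle with a unique chord and $G\notin{\cal C}$.

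I do not expect a serious obstacle, since this is essentially the routine ``two paths across a cutset plus the cutset edge'' construction; the care lies in two bookkeeping points. The first, and the one I would check most explicitly, is that $ab$ is a \emph{chord} of $C$ rather than one of its cycle edges: this holds precisely because $a$ and $b$ are separated on $C$ by at least one interior vertex on each side, i.e. $k\geq1$ and $l\geq1$, which the neighborhood claim supplies. The second is that no unwanted chords (in particular of the form $ax_j$ or $x_ib$) remain, which is immediate from the standard fact that a shortest path is induced. Organizing the neighborhood claim cleanly and confirming these two points is all that the proof really requires.
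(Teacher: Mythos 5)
Your proposal is correct and follows essentially the same route as the paper: use the absence of a 1-cutset to show that both $a$ and $b$ have neighbors in each component of $G\setminus\{a,b\}$, take an induced $ab$-path through each of two components in $G$ minus the edge $ab$, and observe that their union is a cycle whose unique chord is $ab$. The only cosmetic difference is that you enforce inducedness via shortest paths, whereas the paper's convention already defines paths to be chordless.
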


\begin{proof}
  Suppose $\{ a,b \}$ is a 2-cutset of $G$, and $ab$ is an edge.  Let
  $C_1, \ldots ,C_k$ be the connected components of $G \setminus \{
  a,b\}$.  Since $G$ is connected and has no 1-cutset, for every $i
  \in \{ 1, \ldots ,k\}$, both $a$ and $b$ have a neighbor in
  $C_i$. Let $G'$ be the graph obtained from $G$ by removing the edge
  $ab$.  So for every $i \in \{ 1, \ldots ,k\}$, there is an $ab$-path
  $P_i$ in $G'$ whose interior nodes are contained in $C_i$. Then
  $G[V(P_1) \cup V(P_2)]$ is a cycle with a unique chord, and hence $G
  \not \in {\cal C}$.
\end{proof}

If $H$ is any induced subgraph of $G$ and $D$ is a subset of nodes of
$G\setminus H$, the \emph{attachment of $D$ over $H$} is the set of
all nodes of $H$ that have at least one neighbor in $D$.  When clear
from context we do not distinguish between a graph and its node set,
so we also refer to the attachment of $G[D]$ over $H$.

\subsection*{Proof of Theorem~\ref{th:triangle}}

Suppose $G$ contains a triangle, and let $C$ be a maximal clique of
$G$ that contains this triangle.  In fact, $C$ is unique or otherwise
$G$ contains a diamond.  If $G \neq C$ and if no node of $C$ is a
1-cutset of $G$ then let $D$ be a connected induced subgraph of $G\sm
C$, whose attachment over $C$ contains at least two nodes, and that is
minimal with respect to this property. So, $D$ is a path with one end
adjacent to $a\in C$, the other end adjacent to $b \in C\sm \{a\}$ and
$D \cup \{a, b\}$ induces a chordless cycle.  If $D$ has length zero,
then its unique node (say $u$) must have a non-neighbor $c\in C$ since
$C$ is maximal. Hence, $\{u, a, b, c\}$ induces a diamond, a
contradiction.  If $D$ has length at least one then let $c \neq a, b$
be any node of $C$.  Then the hole induced by $D\cup \{a, b\}$ and
node $c$ contradict Lemma~\ref{c:neighHole}. This proves
Theorem~\ref{th:triangle}.

\subsection*{Proof of Theorem~\ref{th:square}}

    \begin{CL}
      \label{c:noTriangle0}    
      We may assume that $G$ is triangle-free. 
    \end{CL}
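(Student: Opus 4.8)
The plan is to settle the claim by disposing of the case where $G$ contains a triangle, using Theorem~\ref{th:triangle} directly; once that case yields the conclusion of Theorem~\ref{th:square}, we are free to add the hypothesis that $G$ is triangle-free for the remainder of the argument. So suppose $G$ contains a triangle, and let $C$ be a maximal clique of $G$ containing it. By Theorem~\ref{th:triangle}, either $G$ is a clique, or some node of $C$ is a 1-cutset of $G$.

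First I would rule out that $G$ is a clique. By the hypothesis of Theorem~\ref{th:square}, $G$ contains a square, the Petersen graph, or the Heawood graph as an induced subgraph. Each of these three graphs possesses a pair of non-adjacent nodes (a square is a chordless $C_4$, and both the Petersen and Heawood graphs are cubic, hence far from complete), whereas in a clique every two nodes are adjacent. Therefore a clique contains none of these three graphs, and consequently $G$ itself cannot be a clique.

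It follows that the second alternative of Theorem~\ref{th:triangle} must hold: one node of $C$ is a 1-cutset of $G$. This is precisely the ``$G$ has a 1-cutset'' outcome in the statement of Theorem~\ref{th:square}, so the theorem holds whenever $G$ contains a triangle. Hence for the rest of the proof of Theorem~\ref{th:square} we may assume that $G$ is triangle-free, which is the content of the claim.

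I do not anticipate any real difficulty here: the entire reduction rests on Theorem~\ref{th:triangle}, and the only verification needed is the elementary observation that a clique cannot contain a square, the Petersen graph, or the Heawood graph, each of which has a non-edge.
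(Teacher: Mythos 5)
Your proposal is correct and follows exactly the paper's argument: the paper's proof is the one-liner ``Clear by Theorem~\ref{th:triangle} (note that $G$ cannot be a clique),'' and you have simply filled in the routine details — that a clique contains no square, Petersen, or Heawood graph since each has a non-edge, so the 1-cutset alternative of Theorem~\ref{th:triangle} must apply. Nothing further is needed.
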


    \begin{proof}
      Clear by Theorem~\ref{th:triangle} (note that $G$ cannot be a
      clique).
    \end{proof}

    \begin{CL}
      \label{c:noSquare0}
      We may assume that $G$ is square-free.
    \end{CL}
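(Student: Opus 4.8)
The claim is the reduction step inside the proof of Theorem~\ref{th:square}: I read ``we may assume that $G$ is square-free'' as meaning that the case in which $G$ contains a square can be disposed of by exhibiting one of the allowed outcomes, namely a proper 1-join. So the plan is to prove: if $G$ contains a square, then $G$ has a proper 1-join; afterwards we may indeed assume $G$ is square-free. By Claim~\ref{c:noTriangle0} we already have that $G$ is triangle-free. Starting from a square $v_1v_2v_3v_4$, its two diagonals give disjoint sets $A_0=\{v_1,v_3\}$ and $B_0=\{v_2,v_4\}$ with all four edges present between them. I would enlarge this to an inclusion-maximal pair of disjoint sets $(A,B)$, each of size at least two, with all edges present between $A$ and $B$. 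Since $G$ is triangle-free, $A$ and $B$ are automatically stable, because two adjacent vertices on one side together with any vertex of the other side would form a triangle.

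The heart of the argument is the following separation property: \emph{no connected component of $G\setminus(A\cup B)$ has a neighbor in both $A$ and $B$}. First, no single vertex $u\notin A\cup B$ can be adjacent both to some $a\in A$ and to some $b\in B$, since $a\sim b$ (all $A$--$B$ edges are present) would make $\{u,a,b\}$ a triangle. Now suppose for contradiction that some component $D$ reaches both sides. I would take a chordless path $P=p_0\cdots p_k$ in $D$ of minimum length such that $p_0$ has a neighbor in $A$ and $p_k$ has a neighbor in $B$; minimality forces $k\geq 1$, makes $P$ induced, and guarantees that every interior $p_i$ is adjacent to neither $A$ nor $B$, that $p_0\not\sim B$, and that $p_k\not\sim A$. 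Choose $a_1\in A$ with $a_1\sim p_0$ and $b_1\in B$ with $b_1\sim p_k$. Maximality then supplies the two extra vertices I need: if $p_0$ were complete to $A$ then, as $p_0\not\sim B$, the set $B\cup\{p_0\}$ would enlarge the pair, so there is $a_2\in A$ with $a_2\not\sim p_0$; symmetrically there is $b_2\in B$ with $b_2\not\sim p_k$.

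Consider the cycle $C^{\ast}=a_2\,b_1\,p_k\,p_{k-1}\cdots p_0\,a_1\,b_2\,a_2$, whose edges $a_2b_1$, $b_1p_k$, the path $P$, $p_0a_1$, $a_1b_2$, $b_2a_2$ are all present (the $A$--$B$ edges by completeness, $b_1p_k$ and $a_1p_0$ by choice, and $P$ itself). I claim its only chord is $a_1b_1$: this pair is adjacent but not consecutive on $C^{\ast}$, whereas $a_1a_2$ and $b_1b_2$ are non-edges ($A,B$ stable), $a_2b_2$, $a_2b_1$, $a_1b_2$ are cycle edges, and every remaining pair involving some $p_i$ is a non-edge because interior vertices miss $A\cup B$, while $p_0\not\sim B$, $p_k\not\sim A$, $a_2\not\sim p_0$ and $b_2\not\sim p_k$. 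Thus $C^{\ast}$ is a cycle with a unique chord, contradicting $G\in{\cal C}$. This cycle construction, together with the correct use of maximality to produce $a_2$ and $b_2$, is the step I expect to be the main obstacle, since one must check that no unwanted chord survives.

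With the separation property in hand, the conclusion is routine. Since $G$ is connected and $A\cup B\neq\emptyset$, every component of $G\setminus(A\cup B)$ has a neighbor in $A\cup B$, hence in exactly one of $A,B$. Let $R_A$ (resp.\ $R_B$) be the union of the components attaching to $A$ (resp.\ to $B$), and set $X=A\cup R_A$ and $Y=B\cup R_B$. There are no edges between $R_A$ and $R_B$ (distinct components), none between $R_A$ and $B$ or between $R_B$ and $A$ (wrong-side attachment), so the only edges between $X$ and $Y$ are the $A$--$B$ edges, all of which are present. As $A,B$ are stable sets of size at least two and $|X|,|Y|\geq 2$, the tuple $(X,Y,A,B)$ is a proper 1-join of $G$. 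Hence a square-containing $G$ already satisfies the conclusion of Theorem~\ref{th:square}, and we may assume from now on that $G$ is square-free.
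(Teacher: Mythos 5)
Your proof is correct and follows essentially the same route as the paper's: extend the square's diagonals to a maximal complete bipartite pair $(A,B)$ (stable by triangle-freeness), take a minimal path in an outside component reaching both sides, use maximality to supply the non-neighbors $a_2, b_2$, and exhibit a cycle whose unique chord is $a_1b_1$, forcing every component of $G\setminus(A\cup B)$ to attach to only one side and hence yielding a proper 1-join. The only differences are cosmetic: you traverse the contradiction cycle in a different cyclic order and spell out the final split $(X,Y,A,B)$ explicitly where the paper states it in one line.
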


    \begin{proof}
      Assume $G$ contains a square. Then $G$ contains disjoint sets of
      nodes $A$ and $B$ such that $G[A]$ and $G[B]$ are both stable graphs,
      $|A|, |B| \geq 2$ and every node of $A$ is adjacent to every
      node of $B$. Let us suppose that $A \cup B$ is chosen to be
      maximal with respect to this property. If $V(G) = A\cup B$ then
      $(A, B)$ is a proper 1-join of $G$, so we may assume that there are
      nodes in $G\sm (A\cup B)$.

      \begin{claim}
        \label{c:noSquare1}
        Every component of $G \sm (A \cup B)$ has neighbors only in
        $A$ or only in $B$.
      \end{claim}
    
      \begin{proofclaim}
        Else, let us take a connected induced subgraph $D$ of $G \sm
        (A\cup B)$, whose attachment over $A \cup B$ contains nodes of
        both $A$ and $B$, and that is minimal with respect to this
        property.  So $D = u \dots v$ is a path, no interior node of
        which has a neighbor in $A\cup B$ and there exists $a\in A$,
        $b\in B$ such that $ua, vb \in E(G)$. By
        Claim~\ref{c:noTriangle0}, $u \neq v$, $u$ has no neighbor in
        $B$ and $v$ has no neighbor in $A$. By maximality of $A \cup
        B$, $u$ has a non-neighbor $a' \in A$ and $v$ has a
        non-neighbor $b' \in B$. Now, $D \cup \{a, b, a', b'\}$ is a
        cycle with a unique chord (namely $ab$), a contradiction.
      \end{proofclaim}
  
      From~(\ref{c:noSquare1}), it follows that $G$ has a proper 1-join with
      special sets $A, B$.
    \end{proof}

    Now, we just have to prove the following two claims:

    \begin{CL}
      \label{c:petersen0}
      If $G$ contains the Petersen graph then the theorem holds.
    \end{CL}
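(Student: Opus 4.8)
The plan is to work under the reductions of Claims~\ref{c:noTriangle0} and~\ref{c:noSquare0}, so that $G$ is triangle-free and square-free, and to show that a graph $G\in{\cal C}$ containing an induced Petersen graph $P$ is either equal to $P$ or has a $1$-cutset. I will freely use that $P$ is strongly regular with parameters $(10,3,0,1)$ (adjacent vertices have no common neighbour, non-adjacent vertices have exactly one common neighbour) and that $\mathrm{Aut}(P)\cong S_5$ acts transitively on the non-adjacent pairs of $P$.

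First I would prove that every node $v\in V(G)\sm V(P)$ has at most one neighbour in $P$. Indeed, if $v$ had two neighbours $x,y\in V(P)$, then $xy\notin E(G)$ since $G$ is triangle-free, so $x,y$ are non-adjacent in $P$ and have a unique common neighbour $z$ in $P$; now $\{v,x,z,y\}$ induces a triangle if $vz\in E(G)$ and a square if $vz\notin E(G)$, a contradiction either way. Next, assuming $V(G)\neq V(P)$, I would pick (exactly as in the proof of Claim~\ref{c:noSquare1}) a connected induced subgraph $D$ of $G\sm V(P)$ whose attachment over $P$ has at least two nodes and that is minimal with this property. By minimality $D$ is a path $u\cdots w$ no interior node of which has a neighbour in $P$; by the previous paragraph $u$ has a unique neighbour $p_1\in V(P)$ and $w$ a unique neighbour $p_2\in V(P)$, with $p_1\neq p_2$ (so in particular $u\neq w$). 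Writing $R$ for the induced path $p_1u\cdots wp_2$, the heart of the argument is to produce, for every such pair, a $p_1p_2$-path $Q$ inside $P$ having exactly one chord; then $R\cup Q$ is a cycle of $G$ with a unique chord, contradicting $G\in{\cal C}$.

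There are two cases. If $p_1p_2\in E(P)$, I take a $5$-cycle of $P$ through the edge $p_1p_2$ (one exists since $P$ has girth~$5$) and let $Q$ be this $5$-cycle minus the edge $p_1p_2$; as $Q$ is an arc of an induced $5$-cycle and $R$ has no neighbour in $P$ except at its ends, the edge $p_1p_2$ is the unique chord of $R\cup Q$. If $p_1p_2\notin E(P)$, transitivity of $S_5$ on non-adjacent pairs lets me assume a single representative pair, for which I exhibit an explicit path $Q$ running $p_1,z,z',\dots ,p_2$, where $z$ is the common neighbour of $p_1$ and $p_2$, $z'$ is the third neighbour of $z$, and $zp_2$ is the chord; checking that this $Q$ has no other chord is then a finite verification. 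I expect this non-adjacent case to be the main obstacle, precisely because the naive alternative — building a hole out of $R$ and an induced $p_1p_2$-path of $P$ and then forcing a node of $P$ with three neighbours on it so as to invoke Lemma~\ref{c:neighHole} — does \emph{not} work: the Petersen graph has only very short induced paths, so no node of $P$ ever acquires three neighbours on such a hole, and one genuinely has to find the one-chord path $Q$ by hand.

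From this contradiction it follows that no connected subset of $G\sm V(P)$, hence no component of $G\sm V(P)$, attaches to two nodes of $P$. Since $G$ is connected, each component of $G\sm V(P)$ attaches to exactly one node $p$ of $P$, and such a $p$ is then a $1$-cutset of $G$, separating that component from the nine remaining nodes of $P$. Therefore either $G\sm V(P)=\emptyset$, in which case $G=P$ is the Petersen graph, or $G$ has a $1$-cutset; in both cases the conclusion of Theorem~\ref{th:square} holds, which proves the claim.
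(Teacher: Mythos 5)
Your proposal is correct and follows essentially the same route as the paper: reduce to triangle- and square-free $G$, show each outside node has at most one neighbour in the Petersen copy, take a minimal attaching path $D$ with ends hooked to two distinct nodes $p_1,p_2$, use the symmetry of the Petersen graph to reduce to the adjacent and distance-two cases, and in each exhibit a cycle with a unique chord (your explicit paths differ from the paper's, e.g. $a_1a_2b_4b_3b_2a_3$ with chord $a_2a_3$ versus the paper's $a_1a_5b_3b_4b_5a_4a_3$ with chord $a_4a_5$, but both check out). The conclusion that each component of $G\setminus P$ then attaches to a single node, yielding a 1-cutset, matches the paper's Claim~(\ref{c:pet2}).
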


    \begin{proof}
      Let $\Pi = \{a_1, \dots a_5, b_1, \dots, b_5\}$ be a set of ten
      nodes of $G$ so that $G[\Pi]$ has adjacencies like in the
      definition of the Petersen graph.  We may assume that there are
      some other nodes in $G$ for otherwise the theorem holds.

      \begin{claim}
        \label{c:pet}
        A node of $G\sm \Pi$ has at most one neighbor in $\Pi$.
      \end{claim}
      
      \begin{proofclaim}
        Otherwise $G$ contains a triangle or a square, contrary to
        Claims~\ref{c:noTriangle0}, \ref{c:noSquare0}.
      \end{proofclaim}
      
      Here below, we use symmetries in the Petersen graph to shorten
      the list of cases. First, the Petersen graph is edge-transitive,
      so up to an automorphism, all edges are equivalent. But also, it
      is ``distance-two-transitive'', meaning that every induced $P_3$
      is equivalent to every other induced $P_3$. To see this, it
      suffices to check that every induced $P_3$ is included in an
      induced $C_5$ and that removing any $P_3$ always yields the same
      graph.

      \begin{claim}
        \label{c:pet2}
        The attachment of any component of $G\sm \Pi$ over $\Pi$
        contains at most one node.
      \end{claim}

      \begin{proofclaim}
        Else, let $D$ be a connected induced subgraph of $G\sm \Pi$
        whose attachment over $\Pi$ contains at least two nodes, and
        that is minimal with respect to this property.  By minimality
        and up to symmetry, $D$ is a path with one end adjacent to
        $a_1$ (and to no other node of $\Pi$ by~(\ref{c:pet})), one
        end adjacent to $x \in \{a_2, a_3\}$ (and to no other node of
        $\Pi$). Moreover, no interior node of $D$ has a neighbor in
        $\Pi$. If $x=a_2$ then $D\cup \{a_1, \dots, a_5\}$ is a cycle
        with a unique chord, a contradiction. If $x=a_3$ then $D\cup
        \{a_1, a_5, b_3, b_4, b_5, a_4, a_3\}$ is a cycle with a
        unique chord, a contradiction again.
      \end{proofclaim}

      From~(\ref{c:pet2}) it follows that $G$ has a 1-cutset.
    \end{proof}

    \begin{CL}
      \label{c:heawood0}
      If $G$  contains the Heawood graph then the theorem holds.
    \end{CL}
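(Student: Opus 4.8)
The plan is to mirror the structure of the Petersen case (Claim~\ref{c:petersen0}) as closely as possible, exploiting the fact that by Claims~\ref{c:noTriangle0} and~\ref{c:noSquare0} we may assume $G$ is triangle-free and square-free, and that the Heawood graph is itself triangle- and square-free with girth~6. First I would fix a set $\Theta = \{a_1, \dots, a_{14}\}$ of fourteen nodes of $G$ inducing a copy of the Heawood graph, and assume $G \neq G[\Theta]$ (otherwise the theorem holds, as $G$ is the Heawood graph). As in the Petersen argument, the goal is to show that every connected component $D$ of $G \sm \Theta$ attaches to at most one node of $\Theta$; once this is established, any such $D$ together with its single attachment point yields a $1$-cutset of $G$, and the theorem follows.

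The first step is the analogue of Claim~\ref{c:pet}: a node $v$ of $G \sm \Theta$ has at most one neighbor in $\Theta$. Indeed, if $v$ had two neighbors in $\Theta$, they would be non-adjacent by square-freeness combined with triangle-freeness (two adjacent neighbors would give a triangle, and two neighbors at distance~2 would, using the short $a_i$–$a_j$ connection in the Heawood graph, produce a square or a cycle with a unique chord). The cleanest way to phrase this uniformly is: two neighbors of $v$ in $\Theta$ at graph-distance~$2$ inside $G[\Theta]$ create a $4$-cycle, hence a square or a chorded cycle, and two at distance~$1$ create a triangle; so any two neighbors must be at distance~$\geq 3$, but then the unique shortest $a_i$–$a_j$ path in the Heawood graph together with $v$ is a cycle, and the chords forced by the Heawood adjacencies make it a cycle with a unique chord. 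To make this rigorous I would invoke the high symmetry of the Heawood graph: it is vertex- and edge-transitive, so up to automorphism I may fix the pair of neighbors, reducing the verification to a small number of cases (by distance class of the pair).

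The second and harder step is the analogue of Claim~\ref{c:pet2}: if some component $D$ had attachment of size at least two, take $D$ connected and minimal with this property, so that $D = u \dots v$ is a path whose interior has no neighbor in $\Theta$, with $u$ adjacent to exactly one node $a_i$ and $v$ adjacent to exactly one node $a_j$, $i \neq j$. I would then close $D$ into a cycle using a path inside $G[\Theta]$ between $a_i$ and $a_j$ and argue that, for an appropriate choice of the internal path, this cycle has exactly one chord, contradicting $G \in {\cal C}$. Here the key structural fact is that the Heawood graph is the incidence graph of the Fano plane and is distance-transitive: its automorphism group acts transitively on ordered pairs of vertices at each fixed distance, so up to symmetry the pair $(a_i, a_j)$ is determined by the distance $d(a_i, a_j) \in \{1, 2, 3\}$ (the diameter is~$3$). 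This reduces the analysis to at most three configurations, and for each I would exhibit a concrete cycle through $D$ that has precisely one chord — for instance, picking a $10$-cycle or a $12$-cycle of the Heawood graph passing through $a_i$ and $a_j$ with exactly one additional $a_pa_q$ edge as its lone chord.

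The main obstacle I anticipate is making the case analysis in the second step genuinely exhaustive rather than merely illustrative: unlike the Petersen graph, the Heawood graph has diameter~$3$ and more vertices, so there are more candidate closing paths and one must verify in each distance class that some closing path yields \emph{exactly} one chord (not zero, and not two or more). The cleanest route is to lean heavily on distance-transitivity to collapse the ``choose $a_i, a_j$'' freedom to the three distance classes, and within each class to use the explicit Fano-plane / incidence-graph description to locate a specific long cycle of $G[\Theta]$ through $a_i$ and $a_j$ containing a single chord-producing edge. If a direct single-cycle witness is awkward in the distance-$3$ case, the fallback is to note that the attachment pattern forces either a chordless cycle that is too short to be allowed given the girth constraints or a doubly-chorded configuration that still contains a cycle with a unique chord as a subconfiguration; either way the contradiction with $G \in {\cal C}$ is obtained, and the theorem is proved.
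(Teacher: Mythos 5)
Your overall strategy is the same as the paper's: bound the attachment of a single node of $G\sm\Pi$, then show a minimal connected component attaching to two nodes of $\Pi$ yields a contradiction, and conclude that $G$ has a 1-cutset. But there is a genuine gap in your first step. You claim that a node $v$ of $G\sm\Pi$ with two neighbours $a_i,a_j$ at distance~$3$ in the Heawood copy gives a cycle with a unique chord, because ``the unique shortest $a_i$--$a_j$ path together with $v$ is a cycle, and the chords forced by the Heawood adjacencies make it a cycle with a unique chord.'' This is false: the Heawood graph has girth~$6$, so a path of length~$3$ between $a_i$ and $a_j$ has no chords among its vertices, and closing it with $v$ produces a \emph{chordless} $C_5$, which is perfectly allowed in $\cal C$. (The same applies to your ``fallback'': a chordless cycle through $v$ is never ``too short to be allowed'' once triangles and squares are excluded, and a doubly-chorded cycle does not in general contain a cycle with a unique chord as a subconfiguration.) To exclude such a $v$ one must exhibit a \emph{longer} cycle of $G[\Pi\cup\{v\}]$ through both neighbours carrying exactly one Heawood chord --- e.g.\ for neighbours $a_1,a_4$ the $9$-cycle $va_4a_5a_6a_7a_8a_{13}a_{14}a_1v$ with unique chord $a_5a_{14}$. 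The paper in fact only proves the weaker bound ``at most two neighbours'' for a single node (ruling out distance-$1$ and distance-$2$ pairs exactly as you do, and noting the remaining candidates $a_4,a_6,a_8,a_{12}$ are pairwise at distance two), and then disposes of the two-neighbour configurations inside the component-attachment claim, where a single node is treated as a path $D$ of length zero.

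Your second step is correct in outline, and your observation that distance-transitivity of the Heawood graph collapses the choice of the pair $(a_i,a_j)$ to three distance classes is a legitimate simplification of the paper's enumeration over $i=2,\dots,14$. However, as written you defer all the explicit witness cycles (``I would exhibit a concrete cycle\ldots{} with exactly one additional edge as its lone chord''), and this is precisely where the combinatorial content of the proof lives: for each class one must name a closed walk through $D$, $a_i$ and $a_j$ using Heawood edges and verify that exactly one further Heawood edge is induced. Until those witnesses are produced --- and until the distance-$3$ single-node case is repaired as above --- the argument is a plan rather than a proof.
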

    
  \begin{proof}
    Let $\Pi = \{a_1, \dots, a_{14}\}$ be a set of fourteen nodes of
    $G$ so that $G[\Pi]$ has adjacencies like in the definition of the
    Heawood graph. We may assume that there are some other nodes in
    $G$ for otherwise the theorem holds.

    \begin{claim}
      \label{c:hea1}
      A node of $G\sm \Pi$ has at most two neighbors in $\Pi$.
    \end{claim}

    \begin{proofclaim}
      Suppose that some node $v$ in $G\sm \Pi$ has at least two
      neighbors in $\Pi$.  Since the Heawood graph is
      vertex-transitive we may assume $va_1\in E(G)$. By
      Claims~\ref{c:noTriangle0} and \ref{c:noSquare0}, $v$ cannot be
      adjacent to a node at distance 1 or 2 from $v$, namely to any of
      $a_{2}, a_{14}, a_{10}, a_{3}, a_{13}, a_{9}, a_{11}, a_{5},
      a_{7}$. So, the only other possible neighbors are $a_{4}, a_{6},
      a_{8}, a_{12}$. But these four nodes are pairwise at distance
      two in $\Pi$, so by Claim~\ref{c:noSquare0}, $v$ can be adjacent
      to at most one of them.
    \end{proofclaim}

    \begin{claim}
      \label{c:hea2}
      The attachment of any component of $G\sm \Pi$ over $\Pi$
      contains at most one node.
    \end{claim}

    \begin{proofclaim}
      Else, let $D$ be a connected induced subgraph of $G\sm \Pi$
      whose attachment over $\Pi$ contains at least two nodes, and is
      minimal with respect to this property. By minimality and up to
      symmetry, $D$ is a path, possibly of length zero, with one end
      adjacent to $a_1$, one end adjacent to $a_i$ where $i\neq 1$ and
      no interior node of $D$ has neighbors in $\Pi$.  Note that by
      assumption, if $D$ is of length at least one then no end of $D$
      can have more than one neighbor in $\Pi$, because such an end
      would contradict the minimality of $D$.  So, $a_1, a_i$ are the
      only nodes of $\Pi$ that have neighbors in $D$ (when $D$ is of
      length zero this holds by~(\ref{c:hea1}).

      If $i = 2$ then $D\cup \{a_1, a_2, a_7, a_8, a_9, a_{10}\}$ is a
      cycle with a unique chord. If $i \in \{3, 4, 5\}$ then $D\cup
      \{a_i, a_{i+1}, \dots, a_8, a_{13}, a_{14}, a_{1}\}$ is a cycle
      with a unique chord. If $i = 6$ then $D\cup \{a_6, a_{5}, a_{4},
      a_9, a_8, a_{13}, a_{14}, a_1\}$ is a cycle with a unique
      chord. If $i \in \{7, 8\}$ then $D\cup \{a_1, a_{2}, \dots,
      a_i\}$ is a cycle with a unique chord. If $i = 9$ then $D\cup
      \{a_9, \dots, a_{14}\}$ is a cycle with a unique chord. If $i
      \in \{10, 11\}$ then $D\cup \{a_1, a_{2}, a_3, a_4, a_9, a_{10},
      a_i\}$ is a cycle with a unique chord.  If $i \in \{12, 13\}$
      then $D\cup \{a_1, a_{2}, a_3, a_4, a_5, a_6, a_{11}, a_{12},
      a_i\}$ is a cycle with a unique chord.  If $i = 14$ then $D\cup
      \{a_1, a_{2}, a_{7}, a_8, a_{13}, a_{14}\}$ is a cycle with a
      unique chord.  In every case, there is a contradiction.
    \end{proofclaim}
    
    From~(\ref{c:hea2}) it follows that $G$ has a 1-cutset. 
  \end{proof}

  This proves Theorem~\ref{th:square}.

\subsection*{Proof of Theorem~\ref{th:nosquare}}

  \setcounter{CL}{0}

  We consider a graph $G$ containing no cycle with a unique chord and
  no square.  So:

  \begin{CL}
    \label{c:noSquare}
    $G$ is square-free.
  \end{CL}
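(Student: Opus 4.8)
This claim is immediate from the standing hypothesis. The proof of Theorem~\ref{th:nosquare} begins by fixing a graph $G$ that ``contains no cycle with a unique chord and no square,'' and ``$G$ is square-free'' simply restates the second half of that assumption. So the proof is the one-line observation that square-freeness holds by hypothesis; recall that a \emph{square} is defined as a hole of length~4, so ``no square'' and ``square-free'' mean exactly the same thing here, and nothing needs to be derived.

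The only reason to promote this to a numbered claim is bookkeeping. Later steps in the proof of Theorem~\ref{th:nosquare} will need to invoke square-freeness repeatedly --- for instance to forbid the four-cycles that would otherwise arise when external nodes share neighbours on a hole --- exactly as Claims~\ref{c:noTriangle0} and~\ref{c:noSquare0} are cited over and over in the Petersen and Heawood analyses of Theorem~\ref{th:square}. Isolating it now gives a clean label to point back to, so that the subsequent case analysis reads uniformly.

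Consequently there is no obstacle in the claim itself: the hypothesis is simply handed to us. The real work --- and the reason square-freeness is recorded first --- lies in what comes next: a companion reduction to the triangle-free case via Theorem~\ref{th:triangle} (a triangle forces a clique or a 1-cutset, both of which are permitted outcomes of Theorem~\ref{th:nosquare}), after which $G$ has girth at least~5 and Lemma~\ref{c:neighHole} governs how external nodes attach to a hole. That attachment analysis, together with Lemma~\ref{abedge} steering $2$-cutsets toward the \emph{proper} ones, is where the difficulty of Theorem~\ref{th:nosquare} is concentrated --- not in the present claim.
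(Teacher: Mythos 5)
Your reading is exactly right: the paper states this claim with no proof at all, since Theorem~\ref{th:nosquare} hypothesizes a square-free graph and the claim merely records that hypothesis under a label for repeated citation later. Your proposal matches the paper's (trivial) treatment, and your remarks about why the label is useful are consistent with how Claim~\ref{c:noSquare} is subsequently invoked.
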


  Our proof now goes through thirteen claims, most of them of the same
  kind: if some basic graph $H$ is an induced subgraph of $G$, then
  either $G=H$ and so $G$ itself is basic, or some nodes of $G\sm H$
  must be attached to $H$ in a way that entails a proper 2-cutset. At
  the end of this process there are so many induced subgraphs
  forbidden in $G$ that we can prove that $G$ is strongly 2-bipartite.

  \begin{CL}
    \label{c:noTriangle}
    We may assume that $G$ is triangle-free. 
  \end{CL}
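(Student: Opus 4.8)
The plan is to reduce immediately to the triangle-free case by invoking Theorem~\ref{th:triangle}. Suppose $G$ contains a triangle. Since $G$ is connected and lies in $\cal C$ (it contains no cycle with a unique chord), Theorem~\ref{th:triangle} applies verbatim, and it yields one of exactly two outcomes: either $G$ is a clique, or some node of the maximal clique containing the given triangle is a 1-cutset of $G$. Note that square-freeness of $G$ plays no role here; the hypotheses of Theorem~\ref{th:triangle} are already satisfied.

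The key observation is that \emph{both} of these outcomes are explicitly listed among the admissible conclusions of Theorem~\ref{th:nosquare}, namely ``$G$ is a clique'' and ``$G$ has a 1-cutset''. Hence, whenever $G$ contains a triangle, the statement of Theorem~\ref{th:nosquare} already holds outright, and nothing further needs to be proved in that case. Consequently, for the remainder of the argument we lose no generality in assuming that $G$ is triangle-free, which is precisely what the claim asserts.

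I expect no real obstacle here. Unlike the corresponding step in the proof of Theorem~\ref{th:square} (Claim~\ref{c:noTriangle0}), where the possibility ``$G$ is a clique'' had to be discarded because the standing hypothesis forced $G$ to contain a square, the Petersen graph, or the Heawood graph, none of which is a clique, in the present setting ``clique'' is a perfectly acceptable conclusion. Thus the only thing to verify is the one-to-one matching of the two conclusions of Theorem~\ref{th:triangle} with two of the listed conclusions of Theorem~\ref{th:nosquare}, and no additional case analysis is required.
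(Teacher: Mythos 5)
Your proposal is correct and is exactly the paper's argument: the paper disposes of this claim with the single line ``Clear by Theorem~\ref{th:triangle}'', and your write-up simply makes explicit that both conclusions of that theorem (clique, or 1-cutset) appear verbatim among the admissible outcomes of Theorem~\ref{th:nosquare}. Your side remark contrasting this with Claim~\ref{c:noTriangle0}, where the clique outcome must be ruled out, is also accurate.
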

      
  \begin{proof}
    Clear by Theorem~\ref{th:triangle}.
  \end{proof}

  \begin{CL}
    \label{c:petersen}
    We may assume that $G$ does not contain the Petersen graph.
  \end{CL}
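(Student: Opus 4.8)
The goal is to show that if $G$ contains the Petersen graph then one of the outcomes of Theorem~\ref{th:nosquare} already holds, so that in the remainder of the proof we may assume $G$ is Petersen-free. The plan is to repeat, almost verbatim, the argument already used for Claim~\ref{c:petersen0} in the proof of Theorem~\ref{th:square}; the two hypotheses that argument needs, namely that $G$ is triangle-free and square-free, are available here by Claims~\ref{c:noTriangle} and~\ref{c:noSquare}.

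First I would fix a set $\Pi = \{a_1, \dots, a_5, b_1, \dots, b_5\}$ of ten nodes inducing a Petersen graph. If $V(G) = \Pi$ then $G$ is (isomorphic to) the Petersen graph, hence trivially an induced subgraph of the Petersen graph, and the theorem holds; so I would assume there is a node outside $\Pi$. The next step is to check that every node of $G \sm \Pi$ has at most one neighbor in $\Pi$: two neighbors in $\Pi$ would be either adjacent, creating a triangle, or non-adjacent, and since the Petersen graph has diameter two any two non-adjacent nodes of $\Pi$ have a common neighbor in $\Pi$, creating a square --- both forbidden.

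The heart of the argument is then to show that the attachment over $\Pi$ of any component of $G \sm \Pi$ is a single node, which immediately yields a 1-cutset (the unique attachment node separates that component from the rest of $\Pi$, which is nonempty as $|\Pi| = 10$). For this I would take a connected induced subgraph $D$ of $G \sm \Pi$ whose attachment contains at least two nodes and is minimal with this property; minimality forces $D$ to be an induced path, no interior node of which touches $\Pi$, with its two ends attached (each to a single node of $\Pi$, by the previous paragraph) to two distinct nodes, say $a_1$ and $x$. Exploiting that the Petersen graph is edge-transitive and transitive on induced $P_3$'s (equivalently, on pairs of nodes at distance two), I would reduce to the two cases $x = a_2$ (attachment nodes adjacent) and $x = a_3$ (attachment nodes non-adjacent). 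In the first case $D$ together with the $C_5$ on $\{a_1, \dots, a_5\}$ forms a cycle whose only chord is $a_1 a_2$; in the second case $D$ together with $\{a_1, a_5, b_3, b_4, b_5, a_4, a_3\}$ forms a cycle whose only chord is $a_4 a_5$. Either way we reach a cycle with a unique chord, a contradiction.

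The step I expect to require the most care is this final case analysis: one has to exhibit, for each of the two symmetry classes, an explicit cycle through $D$ and verify that it has \emph{exactly} one chord rather than zero or several. The symmetry reduction to just two cases is precisely what keeps this manageable, and verifying the chord count reduces to reading off the adjacencies within the chosen nodes of $\Pi$ and using that $D$ is induced with only its two prescribed edges to $\Pi$.
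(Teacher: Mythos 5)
Your proposal is correct and is essentially the paper's argument: the paper proves this claim by simply citing Theorem~\ref{th:square} (observing that the proper 1-join outcome cannot occur since $G$ is square-free), and the case analysis you spell out --- at most one neighbour in $\Pi$ via girth~5 and diameter~2, then the minimal attaching path $D$ reduced by edge- and distance-two-transitivity to the two cases $x=a_2$ and $x=a_3$, each killed by an explicit cycle with a unique chord --- is exactly the proof of the Petersen case of that theorem (Claim~\ref{c:petersen0}). So you have merely inlined the relevant portion of an already-established result rather than invoking it, and all the details you give check out.
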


  \begin{proof}
    Clear by Theorem~\ref{th:square}. Note that $G$ cannot admit a
    proper 1-join since it is square-free. 
  \end{proof}

  \begin{CL}
    \label{c:heawood}
    We may assume that $G$ does not contain the Heawood graph.
  \end{CL}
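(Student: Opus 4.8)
The plan is to invoke Theorem~\ref{th:square} directly, exactly as was done for the Petersen graph in Claim~\ref{c:petersen}. Suppose $G$ contains the Heawood graph. Since $G$ is a connected graph containing no cycle with a unique chord, Theorem~\ref{th:square} applies and yields four possible outcomes: $G$ is the Petersen graph, $G$ is the Heawood graph, $G$ has a 1-cutset, or $G$ has a proper 1-join. I would then verify that each surviving outcome is one of the alternatives in the conclusion of Theorem~\ref{th:nosquare}, so that whenever $G$ contains the Heawood graph the theorem already holds and we are entitled to assume the contrary.

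First I would rule out the proper 1-join using square-freeness. By definition the special sets $A$ and $B$ of a proper 1-join are stable sets of size at least two with all edges present between them, so choosing $a, a' \in A$ and $b, b' \in B$ gives the induced four-cycle $a b a' b' a$ (the non-edges $aa'$ and $bb'$ follow from stability of $A$ and $B$). This is a square, contradicting Claim~\ref{c:noSquare}. Hence the proper-1-join outcome cannot occur, and this is the only point in the argument that requires a word of justification.

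For the three remaining outcomes the conclusion of Theorem~\ref{th:nosquare} is immediate: if $G$ is the Petersen or the Heawood graph then $G$ is an induced subgraph of the Petersen or the Heawood graph, which is one of the listed alternatives, and if $G$ has a 1-cutset then that too is a listed alternative. (The ``$G$ is the Petersen graph'' case is in fact vacuous, since the ten-node Petersen graph cannot contain the fourteen-node Heawood graph as an induced subgraph, but one need not argue this.) Thus in every case Theorem~\ref{th:nosquare} holds for $G$, so we may assume that $G$ does not contain the Heawood graph. I expect no genuine obstacle here: the claim is a routine reduction identical in form to Claim~\ref{c:petersen}, and the substance is entirely carried by Theorem~\ref{th:square} together with the square-freeness of $G$.
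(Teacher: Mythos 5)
Your proposal is correct and matches the paper's own argument: the paper likewise disposes of this claim by citing Theorem~\ref{th:square}, with the observation (recorded there under the Petersen claim) that a proper 1-join is impossible because the special sets would induce a square. Your explicit verification that the remaining outcomes of Theorem~\ref{th:square} are all conclusions of Theorem~\ref{th:nosquare} is exactly the intended, if unstated, bookkeeping.
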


  \begin{proof}
    Clear by Theorem~\ref{th:square}. 
  \end{proof}

  \begin{CL}
    \label{c:Petersen3pc}
    We may assume that $G$ does not contain the following
    configuration: three node-disjoint paths $X = x \dots x'$, $Y =
    y \dots y'$ and $Z = z \dots z'$, of length at least two and with
    no edges between them. There are four more nodes $a, b, c,
    d$. The only edges except those from the paths are $ax, ay, az,
    bx', by, bz', cx', cy', cz, dx, dy', dz'$.
  \end{CL}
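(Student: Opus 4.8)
The plan is to show that if $G$ contains the described configuration $H$ (the three paths $X=x\dots x'$, $Y=y\dots y'$, $Z=z\dots z'$ together with $a,b,c,d$), then the conclusion of Theorem~\ref{th:nosquare} holds, so that the configuration may indeed be excluded from now on. First I would record the elementary structure of $H$: listing the neighbourhoods shows that each of $x,x',y,y',z,z'$ is adjacent to exactly two of $a,b,c,d$, and that the six resulting pairs are exactly the six pairs of $\{a,b,c,d\}$. Consequently $H$ is bipartite, with the six endpoints on one side and $\{a,b,c,d\}$ together with the interior nodes of the three paths on the other, and one checks directly that $H$ contains no cycle with a unique chord. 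Thus the configuration genuinely occurs inside ${\cal C}$ and cannot be ruled out by exhibiting a unique chord. The crucial remark is that when the three paths all have length exactly two, $H$ has $13$ nodes, girth $6$ and is in fact isomorphic to the Heawood graph minus a vertex (equivalently, to the Petersen graph with three pairwise non-adjacent edges subdivided, as noted after the definition of the Heawood graph); in that case $H$ is an induced subgraph of the Heawood graph, so the theorem holds as soon as $G=H$.

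The cutset I would exploit comes from an ``opposite'' pair of endpoints, say $\{x,x'\}$. These two nodes are joined by the path $X$ and by the chordless path $x\,a\,y\,b\,x'$, they are non-adjacent (each path has length at least two) and both have degree at least three, while the interior nodes of $X$ have neighbours only inside $X$. Hence, in $H$, deleting $x$ and $x'$ separates the interior of $X$ from everything else, and $\bigl(V(X)\setminus\{x,x'\},\ V(G)\setminus V(X),\ x,x'\bigr)$ satisfies all requirements of a proper $2$-cutset \emph{provided the interior of $X$ has at least two nodes}, i.e. provided $X$ has length at least three. By symmetry the same holds for $\{y,y'\}$ and $\{z,z'\}$. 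So the only way all three opposite pairs fail to yield a proper $2$-cutset is that every path has a single interior vertex, which is precisely the minimal case identified above, where $H$ is an induced subgraph of the Heawood graph.

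What remains, and what I expect to be the main obstacle, is the attachment analysis for the vertices of $G\setminus H$, needed both to guarantee that no extra edge of $G$ crosses the candidate cutset and to settle the minimal case when $G\neq H$. Using Lemma~\ref{c:neighHole} on the many holes of $H$, together with triangle-freeness, square-freeness and the fact that $G$ contains neither the Petersen nor the Heawood graph (Claims~\ref{c:petersen} and~\ref{c:heawood}), I would argue that any component of $G\setminus H$ can attach to $H$ only along a single path, merely lengthening it (so that the corresponding pair $\{x,x'\}$ still gives a proper $2$-cutset, now with a larger interior side), since any richer attachment would force a cycle with a unique chord. The delicate point is the case where all three paths have length two and extra vertices are present: there one must show that an admissible attachment either supplies a node adjacent to the three interior nodes of $X,Y,Z$ — which reconstructs the Heawood graph and contradicts Claim~\ref{c:heawood} — or else produces a proper $2$-cutset. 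Once every attachment is controlled, the outcome is that either $G=H$ is an induced subgraph of the Heawood graph, or one of $\{x,x'\},\{y,y'\},\{z,z'\}$ is a proper $2$-cutset of $G$, which proves the claim.
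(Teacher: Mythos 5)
Your overall strategy is exactly the paper's: set $\Pi=X\cup Y\cup Z\cup\{a,b,c,d\}$, observe that $G[\Pi]$ alone is either the Heawood graph minus a vertex (all paths of length two) or has a proper $2$-cutset formed by the ends of a long path, and then reduce the general case to an attachment analysis showing that every component of $G\setminus\Pi$ attaches inside a single one of $X$, $Y$, $Z$, $\{a\}$, $\{b\}$, $\{c\}$, $\{d\}$, with the one dangerous configuration being a node that reconstructs the Heawood graph. So the skeleton, the choice of cutset, and the identification of the delicate case all match.

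The genuine gap is that the attachment analysis is not carried out, and it is not a routine verification: it is essentially the entire content of the claim. The paper needs two separate statements --- first that a \emph{single node} of $G\setminus\Pi$ has neighbours in at most one of the seven sets, and then that a whole \emph{component} does --- and each requires a multi-page case split (is $u$ adjacent to a branch node $x$? to one of $a,b,c,d$? to the interiors of two paths?), where in each branch one must exhibit a specific cycle with a unique chord threading through $\Pi$, or invoke Lemma~\ref{c:neighHole} on a specific hole of $\Pi$, or derive the Heawood graph. Writing ``any richer attachment would force a cycle with a unique chord'' asserts the conclusion without producing these cycles; until they are exhibited the claim is unproved. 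Two smaller slips: (i) your asserted bipartition of $H$ (endpoints on one side, $\{a,b,c,d\}$ and all interior nodes on the other) fails as soon as some path has length at least three, since two adjacent interior nodes then land on the same side --- harmless here, but false as stated; (ii) your final dichotomy ``$G=H$ or one of $\{x,x'\},\{y,y'\},\{z,z'\}$ is a proper $2$-cutset'' omits the outcome in which a component of $G\setminus\Pi$ attaches to a single node of $\Pi$, which yields only a $1$-cutset (still an acceptable conclusion of Theorem~\ref{th:nosquare}, but it must be listed, since such a component gives no proper $2$-cutset when all three paths have length two).
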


  \begin{proof}
    Let $\Pi = X \cup Y \cup Z \cup \{a, b, c, d\}$. Nodes $a, b,
    c, d, x, x', y, y', z, z'$ are called here the \emph{branch nodes}
    of $\Pi$. It is convenient to notice that $G[\Pi]$ can be obtained
    by subdividing the edges of any induced matching of size three of
    the Petersen graph.  Note also that either $G[\Pi]$ is the Heawood
    graph with one node deleted (when $X, Y, Z$ all have length two), or
    $G[\Pi]$ has a proper 2-cutset (when one of the paths is of length
    at least three, the proper 2-cutset is formed by the ends of that path).
    Hence we may assume that there are nodes in $G\setminus \Pi$.

    \begin{claim}
      \label{c:P3pc1}
      A node of $G \sm \Pi$ has neighbors in at most one of the
      following sets: $X, Y, Z, \{a\}, \{b\}, \{c\}, \{d\}$.
    \end{claim}

    \begin{proofclaim}
      Let $u$ be a node of $G \sm \Pi$.  Note that $a, b, c, d$ are
      pairwise at distance two in $\Pi$, so by Claims~\ref{c:noSquare}
      and \ref{c:noTriangle}, $u$ can be adjacent to at most one of
      them.

      Suppose first that $ux \in E(G)$.  Then $u$ can be adjacent to
      none of $a, d, y, y', z, z'$ by Claims~\ref{c:noSquare}
      and~\ref{c:noTriangle}. If $u$ is adjacent to some other
      branch-node of $G[\Pi]$ distinct from $x'$, then we may assume
      that $u$ is adjacent to one of $b, c$ (say $b$ up to symmetry),
      but then $u$ is not adjacent to $c$ so $u x d y' c z a y b u$ is
      a cycle with a unique chord, a contradiction.  Hence we may
      assume that $u$ has a neighbor $v$ in the interior of $Y$ or $Z$
      (say $Z$ up to symmetry) for otherwise the claim holds. By
      Lemma~\ref{c:neighHole} and since $xdz'Zzcx'Xx$ is a hole, we
      note that $u$ has exactly two neighbors in $X \cup Z$, namely
      $x$ and $v$.  If $vz'\notin E(G)$ then $xuvZzaybz'dx$ is a cycle
      with a unique chord, a contradiction. So $vz'\in E(G)$ and
      $xuvZzcx'bz'dx$ is a cycle with a unique chord (namely $vz'$), a
      contradiction again. Hence we may assume that $ux\notin E(G)$,
      and symmetrically $ux', uy, uy', uz, uz' \notin E(G)$.

      Suppose now that $ua\in E(G)$. Then $u$ cannot be adjacent to
      any other branch-node of $\Pi$ by the discussion above. So we
      may assume that $u$ has a neighbor in the interior of $X, Y, Z$
      (say $Z$ up to symmetry) for otherwise our claim holds. Now we
      define $v$ to be the neighbor of $u$ along $Z$ closest to $z$
      and observe that $uvZzcx'byau$ is a cycle with a unique chord, a
      contradiction. Therefore we may assume that $u$ is not adjacent
      to any branch-node of $\Pi$.

      We may suppose now that $u$ has neighbors in the interior of at
      least two of the paths among $X, Y, Z$ (say $X, Z$ w.l.o.g.) for
      otherwise our claim holds. Let $v$ (resp. $w$) be a neighbor of
      $u$ in $X$ (resp. $Z$). Since $X \cup Z \cup \{d, c\}$ induces a
      hole, by Lemma~\ref{c:neighHole}, $N(u) \cap (X \cup Z) = \{v,
      w\}$.  If $vx \notin E(G)$ then $vuwZz'dxaybx'Xv$ is a cycle
      with unique chord. So $vx \in E(G)$ and symmetrically, $vx', wz,
      wz' \in E(G)$. If $u$ has no neighbor in $Y$ then
      $vuwz'dy'Yybx'v$ is a cycle with a unique chord, so $u$ must
      have at least one neighbor $w'$ in $Y$. By the same discussion
      that we have done above on $X, Z$ we can prove that $w'y, w'y'
      \in E(G)$. Now we observe that $G[\Pi\cup \{u\}]$ is the Heawood
      graph, contradicting Claim~\ref{c:heawood}.
    \end{proofclaim}

    \begin{claim}
      \label{c:P3pc2}
      The attachment of any component of $G\setminus \Pi$ is included
      in one of the sets $X, Y, Z, \{a\}, \{b\}, \{c\}, \{d\}$.
    \end{claim}

    \begin{proofclaim}
      Else let $D$ be a connected induced subgraph of $G\setminus \Pi$
      whose attachment overlaps two of the sets, and is minimal with
      respect this property. By~(\ref{c:P3pc1}), $D$ is a path of
      length at least one, with ends $u, v$ and $u$ (resp. $v$) has
      neighbors in exactly one set $S_u$ (resp. $S_v$) of $X, Y, Z,
      \{a\}, \{b\}, \{c\}, \{d\}$. Moreover, no interior node of $D$
      has a neighbor in $\Pi$. If $S_u = X$ and $S_v = \{a\}$ then let
      $w$ be the neighbor of $u$ closest to $x$ along $X$. We observe
      that $ayYy'dxXwuDva$ is a cycle with a unique chord, a
      contradiction.  Every case where there is an edge between $S_u$
      and $S_v$ is symmetric, so we may assume that there is no edge
      between $S_u$ and $S_v$. If $S_u = X$ and $S_v = Z$ then let $w$
      (resp. $w'$) be the neighbor of $u$ (resp. of $v$) closest to
      $x'$ along $X$ (resp. to $z'$ along~$Z$).  If $w=x$ and $w'=z$
      then $xdy'YyazvDux$ is a cycle with a unique chord, a
      contradiction.  So up to symmetry we may assume $w' \neq z$.
      Hence $wXx'cy'Yybz'Zw'vDuw$ is a cycle with a unique chord, a
      contradiction.  So up to symmetry we may assume that $S_u =\{ a
      \}$ and $S_v = \{ c \}$.  But then $auDvcx'bz'Zza$ is a cycle
      with a unique chord, a contradiction.
    \end{proofclaim}

    By~(\ref{c:P3pc2}), either some component of $G\sm \Pi$ attaches to
    a node of $\Pi$ and there is a 1-cutset, or some component
    attaches to one of $X, Y, Z$ (say $X$ up to symmetry), and $\{x,
    x'\}$ is a proper 2-cutset.
  \end{proof}

  \begin{CL}
    \label{c:hexagonal3PC}
    We may assume that $G$ does not contain the following
    configuration: three node-disjoint paths $X = x \dots x'$, $Y = y
    \dots y'$ and $Z = z \dots z'$, of length at least two, and such
    that the only edges between them are $xy$, $yz$, $zx'$, $x'y'$,
    $y'z'$ and $z'x$.
  \end{CL}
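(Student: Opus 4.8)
The plan is to follow the scheme used for Claim~\ref{c:Petersen3pc}. Write $\Pi=X\cup Y\cup Z$ and call $x,x',y,y',z,z'$ the \emph{branch nodes}. These six nodes induce a hexagon $x\,y\,z\,x'\,y'\,z'$ (exactly the listed edges), and the three paths join the three antipodal pairs $\{x,x'\},\{y,y'\},\{z,z'\}$; equivalently $G[\Pi]$ is obtained from $K_{3,3}$ by subdividing a perfect matching. Throughout I would use the three \emph{two-path holes}: $H_{XY}$, consisting of $X$, $Y$ and the hexagon edges $xy,x'y'$, and likewise $H_{YZ}$ and $H_{ZX}$. I would also use the symmetry of $\Pi$: the hexagon automorphisms induce every permutation of $\{X,Y,Z\}$, together with a symmetry reversing all three paths simultaneously, so I may fix the relevant paths ``up to symmetry''. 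First dispose of $G=\Pi$: if some path, say $X$, has length at least three then $\{x,x'\}$ is a proper $2$-cutset (one side is the interior of $X$, the other contains the $xx'$-path $x\,y\,z\,x'$); and if all three paths have length exactly two then $G[\Pi]$ is the Petersen graph with one node deleted, so $G$ is an induced subgraph of the Petersen graph. Hence I may assume there are nodes in $G\sm\Pi$.

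The heart of the proof is the analogue of~(\ref{c:P3pc1}): \emph{every node $u\in G\sm\Pi$ has neighbours in at most one of $X,Y,Z$}. Suppose not. By Claims~\ref{c:noTriangle} and~\ref{c:noSquare}, $u$ cannot be adjacent to two nodes of $\Pi$ lying at distance one or two, and by Lemma~\ref{c:neighHole} applied to the relevant two-path hole, $u$ has at most two neighbours there and they are non-adjacent. I would split according to the type of $u$'s neighbours. If $u$ is adjacent to a branch node $x$ and to a node $w$ of another path, then according to the position of $w$ one obtains a triangle (when $w$ is a hexagon-neighbour of $x$), a square (when $x$ and $w$ share a neighbour, e.g. $w=z$ gives the square on $u,x,y,z$), or, when $w$ is interior, a cycle with a unique chord: the cycle through $u$, $x$, all of $Y$, and the part of $Z$ from $z'$ to $w$, whose only chord is the hexagon edge $xz'$. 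If $u$ is adjacent to interior nodes of exactly two paths, a similar routing that traverses the third path in full again exposes a single hexagon edge as the unique chord.

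The genuinely delicate case, which I expect to be the main obstacle, is when $u$ has exactly one neighbour $v_X,v_Y,v_Z$ in the interior of each of the three paths; here Lemma~\ref{c:neighHole} applied to the three holes forces one neighbour per path, pairwise non-adjacent. If all three paths have length two, then $v_X,v_Y,v_Z$ are the unique interior nodes and $G[\Pi\cup\{u\}]$ is exactly the Petersen graph, contradicting Claim~\ref{c:petersen}. Otherwise some path is longer and the Petersen symmetry is broken, so one must produce a cycle whose unique chord is a \emph{path} edge rather than a hexagon edge; for instance, using that $u$ short-circuits two of the paths, a cycle of the shape $u\,v_X\dots x\,z'\,y'\,x'\,z\,v_Z\,u$ can be arranged so that its only chord is the single edge of $Z$ incident to $z'$. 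Verifying that exactly one chord survives, uniformly over the positions of $v_X,v_Y,v_Z$ and over the path lengths, is where the bookkeeping is heaviest, and the dihedral symmetry is what keeps the number of cases bounded.

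Once this node-lemma holds, the attachment lemma (the analogue of~(\ref{c:P3pc2})) follows the usual minimal-subgraph argument: a connected $D\subseteq G\sm\Pi$ whose attachment meets two of $X,Y,Z$ and is minimal is a path whose two ends attach to two different paths, and routing a cycle through $D$ and around the hexagon yields a unique chord, a contradiction. Hence the attachment of every component of $G\sm\Pi$ lies inside a single path. To conclude: if some component attaches to a single node of $\Pi$ we get a $1$-cutset; otherwise a component attaches to two nodes of one path, say $X$, and then $\{x,x'\}$ is a proper $2$-cutset, its sides being the interior of $X$ together with the components hanging on $X$ on one hand, and the rest on the other, both containing an $xx'$-path. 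This proves the claim.
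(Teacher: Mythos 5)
Your overall architecture is the right one and matches the paper's: dispose of $G=\Pi$ (Petersen minus a node, or a proper $2$-cutset from a long path), prove that a node of $G\sm\Pi$ attaches to at most one of $X,Y,Z$, then run the minimal-connected-subgraph argument to get the attachment lemma and conclude with a proper $2$-cutset. Most of your case analysis for the node-level claim is also sound. But there is a genuine gap exactly in the case you flag as delicate, and the fix is not ``heavier bookkeeping'': when $u$ has one neighbour $v_X,v_Y,v_Z$ in the interior of each path, the forced adjacencies are (up to symmetry) $v_Xx,\,v_Yy',\,v_Zz\in E(G)$, and then the three edges $v_Xx',\,v_Yy,\,v_Zz'$ are each either present or absent. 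If all three are present you correctly get the Petersen graph; if some but not all are present a cycle with a unique chord exists. But if \emph{none} of them is present (which happens as soon as the paths have length at least three, e.g.\ all paths of length $3$ with $v_X,v_Y,v_Z$ adjacent to $x,y',z$ respectively), then $G[\Pi\cup\{u\}]$ is precisely the subdivided-Petersen configuration of Claim~\ref{c:Petersen3pc} --- the three paths $v_XXx'$, $v_YYy$, $v_ZZz'$ together with the four nodes $u,z,y',x$ --- and that configuration contains \emph{no} cycle with a unique chord (it is the Heawood graph minus a node when the paths are short, and in general it is itself a graph of $\cal C$). Your proposed cycle $u\,v_X\dots x\,z'\,y'\,x'\,z\,v_Z\,u$ is in fact chordless there: its only candidate chords would be $v_Xx'$, $v_Zz'$, or an edge from $u$, and all are absent. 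So no amount of rerouting will produce the contradiction you are after.

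The missing idea is that this case must be closed by invoking Claim~\ref{c:Petersen3pc} itself (which is why that claim is proved first and why its conclusion is a proper $2$-cutset or the Heawood graph rather than an outright contradiction), not by exhibiting a cycle with a unique chord. With that one substitution your argument goes through; the rest --- the two-neighbour cases, the attachment lemma, and the extraction of the proper $2$-cutset --- is essentially the paper's proof.
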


  \begin{proof}
    Note that $G[x, y, z, x', y', z']$ is a hole on six nodes.  Also
    either $G[X\cup Y \cup Z]$ is the Petersen graph with one node
    deleted (when $X, Y, Z$ have length two), or $G[X\cup Y \cup Z]$
    has a proper 2-cutset (when one of the paths is of length at least
    three, the 2-cutset is formed by the ends of that path). Hence we
    may assume that there are nodes in $G\setminus (X\cup Y \cup Z)$.
    
    \begin{claim}
      \label{c:h3pc1}
      A node of $G\sm (X\cup Y \cup Z)$ has neighbors in at most one
      of the sets $X, Y, Z$.
    \end{claim}

    \begin{proofclaim}
      Let $u$ be a node of $G\sm (X \cup Y \cup Z)$.  Note that $u$
      has neighbors in at most one of the following sets: $\{x, x'\}$,
      $\{y, y'\}$, $\{z, z'\}$, for otherwise $G$ contains a triangle
      or a square, contradicting  Claims~\ref{c:noSquare} and
      \ref{c:noTriangle}.
 
      If $u$ has at least two neighbors among $x, x', y, y', z, z'$
      then we may assume by the paragraph above $ux, ux' \in
      E(G)$. Since $X\cup Y$ and $X\cup Z$ both induce holes, every
      node in $X\cup Y \cup Z$ is in a hole going through $x, x'$. So,
      by Lemma~\ref{c:neighHole} $u$ has no other neighbors in $X\cup
      Y \cup Z$. Hence, from here on, we assume that $u$ has at most
      one neighbor among $x, x', y, y', z, z'$.

      If $ux \in E(G)$ then we may assume that $u$ has neighbors in
      one of $Y, Z$, say $Z$ up to symmetry.  Let $v \in Z$ be a
      neighbor of $u$.  Then by Lemma~\ref{c:neighHole}, since $X\cup
      Z$ induces a hole, $v$ and $x$ are the only neighbors of $u$ in
      $X\cup Z$.  So, $xuvZz'y'x'Xx$ is a cycle with a unique chord, a
      contradiction. Hence we may assume that $u$ has no neighbors
      among $x, x', y, y', z, z'$.

      If $u$ has neighbors in the interior of at most one of $X, Y, Z$
      our claim holds, so let us suppose that $u$ has neighbors in the
      interior of $X$ and the interior of $Y$. Since $X\cup Y$ induces
      a hole, by Lemma~\ref{c:neighHole}, $u$ has a unique neighbor
      $v\in X$ and a unique neighbor $w\in Y$.  If $u$ has no neighbor
      in $Z$ then $xXvuwYyzZz'x$ is a cycle with a unique chord, a
      contradiction. So $u$ has a neighbor $w'\in Z$ that is unique by
      Lemma~\ref{c:neighHole}.  If $vx, wy \notin E(G)$ then
      $x'zyxz'y'YwuvXx'$ is a cycle with a unique chord, a
      contradiction. So, up to a symmetry we may assume $vx \in
      E(G)$. If $wy'\notin E(G)$ then $x'y'z'xyYwuvXx'$ is a cycle
      with a unique chord, a contradiction, so $wy'\in E(G)$. By the
      same argument, we can prove that $w'z\in E(G)$. If $vx', wy, w'z' \in
      E(G)$ then we observe that $G[X\cup Y \cup Z \cup \{u\}]$ is the
      Petersen graph, contradicting Claim~\ref{c:petersen}. If $vx',
      wy, w'z' \notin E(G)$ then we observe that the three paths
      $vXx'$, $wYy$, $w'Zz'$ and nodes $u, z, y', x$ have the same
      configuration as those in Claim~\ref{c:Petersen3pc}, a
      contradiction. So, we may assume that $vx'\in E(G)$ and $wy
      \notin E(G)$. But then, $wuvxyzx'y'w$ is a cycle with a unique
      chord, a contradiction.
    \end{proofclaim}

    \begin{claim}
      \label{c:h3pc2}
      The attachment of any component of $G\setminus (X\cup Y \cup Z)$
      is included in one of the sets $X, Y, Z$.
    \end{claim}

    \begin{proofclaim}
      Else let $D$ be a connected induced subgraph of $G\setminus (X
      \cup Y \cup Z)$, whose attachment overlaps two of the sets, and
      is minimal with this property. By~(\ref{c:h3pc1}), $D$ is a path
      of length at least one, with ends $u, v$ and no interior node of
      $D$ has a neighbor in $X\cup Y \cup Z$. We may assume that $u$
      has neighbors only in $X$ and $v$ only in $Y$. Let $u'$
      (resp. $v'$) be the neighbor of $u$ (resp. of $v$) closest to
      $x$ along $X$ (resp. to $y$ along $Y$). If $u'\neq x'$ and
      $v'\neq y'$ then $zyYv'vDuu'Xxz'Zz$ is a cycle with a unique
      chord, a contradiction, so we may assume $v'=y'$.  Let $u''$ be
      the neighbor of $u$ closest to $x'$ along $X$.  If $u'' \neq x$
      then $uu''Xx'zZz'y'vDu$ is a cycle with a unique chord a
      contradiction. So, $u'' = x$ and $uxz'Zzx'y'vDu$ is a cycle with
      a unique chord, a contradiction.
    \end{proofclaim}

    By~(\ref{c:h3pc2}) one of $\{x, x'\}$, $\{y, y'\}$, $\{z, z'\}$ is
    a proper 2-cutset of $G$.
  \end{proof}

  \begin{CL}
    \label{c:octogonal4PC}
    We may assume that $G$ does not contain the following
    configuration: four node-disjoint paths $X = a_1 \dots a_5$, $Y
    = a_2 \dots a_6$, $Z = a_3 \dots a_7$ and $T = a_4 \dots a_8$, of
    length at least two, and such that the only edges between them are
    $a_1a_2$, $a_2a_3$, $a_3a_4$, $a_4a_5$, $a_5a_6$, $a_6a_7$,
    $a_7a_8$ and $a_8a_1$.
  \end{CL}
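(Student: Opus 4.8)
The plan is to follow the template already used for Claims~\ref{c:Petersen3pc} and~\ref{c:hexagonal3PC}. Write $\Pi = X \cup Y \cup Z \cup T$ and call $a_1, \dots, a_8$ the \emph{branch nodes}; note that the octagon $a_1 a_2 \cdots a_8$ is a chordless cycle of $G$, and that each pair of paths whose ends are consecutive on the octagon (namely $\{X,Y\}$, $\{Y,Z\}$, $\{Z,T\}$, $\{T,X\}$) spans a hole. I would first dispose of the case $G = G[\Pi]$. When all four paths have length exactly two, $G[\Pi]$ is isomorphic to the graph obtained from the Heawood graph by deleting two adjacent nodes: the eight degree-three nodes then form the octagon and the four degree-two nodes subdivide its four main diagonals $a_1a_5, a_2a_6, a_3a_7, a_4a_8$, which one checks matches the Heawood graph with an edge's two endpoints removed; hence $G[\Pi]$ is an induced subgraph of the Heawood graph and the theorem holds. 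When some path, say $X$, has length at least three, its ends $a_1, a_5$ are non-adjacent and of degree three, and they separate the (at least two) interior nodes of $X$ from the rest, so $\{a_1,a_5\}$ is a proper 2-cutset. Thus I may assume there is a node in $G \setminus \Pi$.

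The core step is the analogue of Claims~\ref{c:P3pc1} and~\ref{c:h3pc1}: \emph{every node $u$ of $G \setminus \Pi$ has neighbors in at most one of $X, Y, Z, T$}. By Claims~\ref{c:noTriangle} and~\ref{c:noSquare}, $u$ cannot have two neighbors among the branch nodes at octagon-distance one or two; since two branch nodes in distinct paths are at octagon-distance one, two or three, the only admissible cross-path pairs of branch-node neighbors are at distance exactly three, e.g.\ $a_1 \in X$ and $a_6 \in Y$. For such cases I would use the \emph{global} octagon rather than a single spanning hole: for instance, if $u$ is adjacent to $a_1$ and $a_6$ and to no other node of $\Pi$, then $u\,a_1\,a_8\,T\,a_4\,a_3\,Z\,a_7\,a_6\,u$ is a cycle whose unique chord is $a_7a_8$, a contradiction. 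The remaining cross-path configurations split by whether the two paths are consecutive or opposite on the octagon and by whether $u$'s neighbors are branch or interior nodes; each opposite-path case with nearby neighbors collapses to a square (contradicting Claim~\ref{c:noSquare}), while the others, after the neighbors are pinned down uniquely by Lemma~\ref{c:neighHole} applied to the relevant spanning hole, yield either a cycle with a unique chord routed around the octagon through the two untouched paths, or, when $u$ meets three or four paths coherently, an induced Petersen graph, Heawood graph, or one of the configurations of Claims~\ref{c:Petersen3pc} and~\ref{c:hexagonal3PC}, all already forbidden. I expect this case analysis to be the main obstacle: there are many sub-cases and the chord-carrying cycle typically uses octagon edges far from $u$, so the routing must be chosen with care; the dihedral symmetry of the octagon, together with the automorphism swapping $(X,Z)$ and $(Y,T)$ and the reflections, should cut the work down considerably.

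Granting this, the second sub-claim (analogue of Claims~\ref{c:P3pc2} and~\ref{c:h3pc2}) is that the attachment of any component of $G \setminus \Pi$ is contained in a single one of $X, Y, Z, T$. I would prove it in the usual way: a minimal connected $D \subseteq G \setminus \Pi$ whose attachment meets two of the sets is, by the first sub-claim, a path with ends $u,v$ attaching to two distinct paths with no interior attachment, and joining $u$ to $v$ through $D$ while routing the rest around the octagon produces a cycle with a unique chord. Finally, take a component attaching inside a single path, say $X$, and let $A$ be the union of the interior of $X$ with all components attaching inside $X$; then $\{a_1, a_5\}$ separates $A$ from the rest. Since $A \cup \{a_1,a_5\}$ contains the $a_1a_5$-path $X$, the other side contains an $a_1a_5$-path through $a_2,a_3,a_4$ on the octagon, both $a_1$ and $a_5$ have degree at least three and are non-adjacent, and $|A| \ge 2$ because $X$ has a non-empty interior and at least one component is present, $\{a_1, a_5\}$ is a proper 2-cutset of $G$. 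This establishes the claim.
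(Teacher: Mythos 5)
Your overall architecture (dispose of $G=G[\Pi]$, then a local claim about single nodes, then a claim about components, then read off a proper 2-cutset) matches the paper's, and the base case is handled correctly. But there is a genuine gap in the core step, concentrated in one case that your proposal never actually engages with: a node (or, in the second sub-claim, a minimal connecting path $D=u\dots v$) whose two attachments lie in the \emph{interiors} of two \emph{opposite} paths, say $x$ in the interior of $X$ and $z$ in the interior of $Z$. Here none of your proposed mechanisms fires: no square or triangle arises; Lemma~\ref{c:neighHole} only pins the neighbors down to one per path; and ``routing the rest around the octagon'' produces a \emph{chordless} cycle, not a cycle with a unique chord --- e.g.\ $u\,x\,X\,a_5\,a_6\,a_7\,Z\,z\,u$ and $u\,x\,X\,a_1\,a_2\,a_3\,Z\,z\,u$ are both holes, and detours through $Y$ or $T$ fare no better. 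Consequently your stated first sub-claim (``neighbors in at most one of $X,Y,Z,T$'') is not provable by the means you describe, and your second sub-claim's blanket assertion that rerouting ``produces a cycle with a unique chord'' is false in this case.

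The paper resolves exactly this case, and it is the longest and most delicate part of the argument: one first proves that $v$ has a \emph{unique} neighbor $z$, that $z$ is interior to $Z$, and that $Z$ \emph{has length exactly two} (the proof of this requires carefully chosen cycles such as $a_5a_6a_7a_8Ta_4a_3ZzvDuxXa_5$, whose unique chord $a_4a_5$ lives far from $D$), then symmetrically for $X$, and only then observes that the three paths $xuDvz$, $Y$, $T$ together with $a_1,a_3,a_7,a_5$ form the forbidden configuration of Claim~\ref{c:Petersen3pc}. You do mention falling back on Claims~\ref{c:Petersen3pc} and~\ref{c:hexagonal3PC}, but you attach that fallback to the wrong case (``when $u$ meets three or four paths'') and you never identify the need to force the opposite paths to have length two, which is the step that makes the reduction legitimate. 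Note also that the paper's first sub-claim is deliberately weaker than yours --- only ``at most two neighbors in $\Pi$'' --- precisely because the two-opposite-paths situation cannot be killed locally and must be deferred to the component-level analysis.
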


  \begin{proof}
    Either $G[X\cup Y \cup Z \cup T]$ is obtained from the Heawood
    graph by deleting two adjacent nodes (when $X, Y, Z, T$ have
    length two), or $G[X\cup Y \cup Z \cup T]$ has a proper 2-cutset
    (when one of the paths is of length at least three, the 2-cutset
    is formed by the ends of that path).  Hence we may assume that
    there are nodes in $G\setminus (X\cup Y \cup Z \cup T)$.

    \begin{claim}
      \label{c:o4pc1}
      A node of $G\sm (X\cup Y \cup Z \cup T)$ has at most two
      neighbors in $X\cup Y \cup Z \cup T$.\hspace{-1em}
    \end{claim}

    \begin{proofclaim}
      For suppose that a node $u$ of $G\sm (X\cup Y \cup Z \cup T)$
      has at least three neighbors in $X\cup Y \cup Z \cup T$. Since
      every pair of path from $X, Y, Z, T$ can be embedded in a hole
      (for instance, $X\cup Y$ or $X \cup Z \cup \{a_2, a_6\}$ are
      holes, and the other cases are symmetric), by
      Lemma~\ref{c:neighHole}, the neighbors of $u$ lie on three or
      four paths and every path contains at most one neighbor of $u$.

      Suppose $u$ is adjacent to one of the $a_i$'s, say $a_1$. Then
      by Lemma~\ref{c:neighHole} $u$ has at most one neighbor in $Y
      \cup T$ since $Y \cup T \cup \{a_1, a_5\}$ is a hole.  So up to
      symmetry we assume that $u$ has a neighbor $v$ in $Y$, no
      neighbor in $T$, and so $u$ must have a neighbor $w$ in $Z$.  By
      Lemma~\ref{c:neighHole} applied to the hole $X\cup Z \cup \{a_2,
      a_6\}$ and node $u$, $v$ must be in the interior of $Y$. If
      $w\neq a_3$ then $w Z a_7 a_8 T a_4 a_5 a_6 Y v u w$ is a cycle
      with a unique chord. So $w = a_3$ and $G$ contains a square, a
      contradiction to Claim~\ref{c:noSquare}. Hence we may assume
      that $u$ has no neighbors among the $a_i$'s.

      Up to symmetry we assume that $u$ has neighbors $x\in X$, $y \in
      Y$, $z\in Z$. These neighbors are unique and are in the interior
      of their respective paths. So $uxXa_5a_6Ya_2a_3Zzu$ is a cycle
      with a unique chord (namely $uy$), a contradiction.
    \end{proofclaim}

    \begin{claim} 
      \label{c:o4pc2}
      The attachment of any component of $G\setminus (X\cup Y \cup Z
      \cup T)$ is included in one of the sets $X, Y, Z, T$.
    \end{claim}

    \begin{proofclaim}
      Else let $D$ be a connected induced subgraph of $G\setminus (X
      \cup Y \cup Z \cup T)$, whose attachment overlaps two of the
      sets, and is minimal with respect this property.  By the choice
      of $D$, the following hold.  $D$ is a path, possibly of length
      zero, with ends $u, v$, and we may assume up to symmetry that
      $u$ has neighbors in $X$ and that $v$ has neighbors in $Y$ or in
      $Z$. No interior node of $D$ has neighbors in $X \cup Y \cup Z
      \cup T$.  If $u\neq v$ then $u$ has neighbors only in $X$ and
      $v$ only in $Y$ or in $Z$.  If $u=v$ then by~(\ref{c:o4pc1}) $u$
      has  neighbors only in $X \cup Y$ or only in $X \cup Z$.
   
      If $v$ has neighbors in $Y$ then let $x$ be the neighbor of $u$
      closest to $a_5$ along $X$ and $y$ be the neighbor of $v$
      closest to $a_6$ along $Y$. If $x=a_1$ and $y=a_2$ then $D \cup
      \{a_1, \dots ,a_8\}$ is a cycle with a unique chord, so up to
      symmetry we may assume $x \neq a_1$. But then, $u x X a_5 a_4 T
      a_8 a_7 a_6 Y y v D u$ is a cycle with a unique chord, a
      contradiction.
 
      So $v$ has neighbors in $Z$. We claim that $v$ has a unique
      neighbor $z$ in $Z$, that is in the interior of $Z$, and that
      $Z$ has length two. Else, up to the symmetry between $a_3$ and
      $a_7$ we may assume that the neighbor $z$ of $v$ closest to
      $a_3$ along $Z$ is not $a_7$ and is not adjacent to $a_7$.  Let
      $x$ be the neighbor of $u$ closest to $a_5$ along $X$. If
      $x=a_1$ then $a_1Xa_5a_6Ya_2a_3ZzvDua_1$ is a cycle with a
      unique chord.  So $x \neq a_1$.  If $v$ is not adjacent to
      $a_7$, then $a_5a_6a_7a_8Ta_4a_3ZzvDuxXa_5$ is a cycle with
      unique chord, a contradiction. So $v$ is adjacent to $a_7$. In
      particular, $u \neq v$.  By~(\ref{c:o4pc1}), $a_7$ and $z$ are
      the only neighbors of $v$ in $Z$. But then $a_5a_6a_7ZzvDuxXa_5$
      is a cycle with unique chord, a contradiction.  So, our claim is
      proved. Similarly, it can be proved that $u$ has a unique
      neighbor $x$ in $X$, that this neighbor is in the interior of
      $X$, and that $X$ has length two.  We observe that the three
      paths $xuDvz$, $Y$, $T$ and nodes $a_1, a_3, a_7, a_5$ have the
      same configuration as those in Claim~\ref{c:Petersen3pc}, a
      contradiction.
    \end{proofclaim}

    By~(\ref{c:o4pc2}), one of 
    $\{a_1, a_5\}$, $\{a_2, a_6\}$, $\{a_3, a_7\}$, $\{a_4, a_8\}$ is
    a proper 2-cutset.
  \end{proof}

  \begin{CL}
    \label{c:5PC}
    We may assume that $G$ does not contain the following
    configuration: five paths $P_{13} = a_1 \dots a_3$, $P_{15} = a_1
    \dots a_5$, $P_{48} = a_4 \dots a_8$, $P_{37} = a_3 \dots a_7$,
    $P_{57} = a_5 \dots a_7$, node disjoint except for their ends, of
    length at least two, and such that $G[P_{13} \cup P_{15} \cup
      P_{37} \cup P_{57}]$ is a hole and the only edges between this
    hole and $P_{48}$ are $a_3a_4$, $a_4a_5$, $a_7a_8$ and $a_8a_1$.
  \end{CL}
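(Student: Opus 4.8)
The plan is to follow the template of Claims~\ref{c:Petersen3pc}, \ref{c:hexagonal3PC} and~\ref{c:octogonal4PC}. Set $\Pi = P_{13} \cup P_{15} \cup P_{48} \cup P_{37} \cup P_{57}$ and let $H = P_{13} \cup P_{37} \cup P_{57} \cup P_{15}$ be the hole through the corners $a_1, a_3, a_7, a_5$. First I would settle the case $G = G[\Pi]$. When all five paths have length exactly two, $G[\Pi]$ has eleven nodes, namely six of degree three ($a_1, a_3, a_4, a_5, a_7, a_8$) and five of degree two; it is bipartite of girth six, and in fact it is the Heawood graph with a three-node path deleted, hence an induced subgraph of the Heawood graph and so basic. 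When some path has length at least three, its two ends are non-adjacent, have degree at least three, and removing them separates the interior of that path from the rest while leaving an end-to-end path on each side, so they form a proper 2-cutset. In either case the theorem holds for $G[\Pi]$, so we may assume $G \setminus \Pi \neq \emptyset$.

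The heart of the argument is the single-node attachment claim: a node $u$ of $G \setminus \Pi$ has neighbors in at most one of the five paths (a neighbor at a shared corner being charged to every path containing it). I would prove this by a case analysis driven by Lemma~\ref{c:neighHole} and the earlier exclusions. The structural inputs are that every two of the four hole-paths lie on $H$, and that $P_{48}$ together with any one hole-path also lies on a hole (closed up through the edges $a_3a_4, a_4a_5, a_7a_8, a_8a_1$); thus by Lemma~\ref{c:neighHole} $u$ has at most two, pairwise non-adjacent, neighbors on each of these holes. By Claims~\ref{c:noTriangle} and~\ref{c:noSquare}, $u$ is adjacent to at most one node of any triple that is pairwise within distance two in $\Pi$, in particular to at most one of $a_3, a_4, a_5$ and at most one of $a_7, a_8, a_1$. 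For every way $u$ could see two distinct paths I would produce a cycle with a unique chord, using the automorphisms of the configuration (the rotation $a_1 \leftrightarrow a_5$, $a_3 \leftrightarrow a_7$, $a_4 \leftrightarrow a_8$ and the reflection $a_1 \leftrightarrow a_3$, $a_5 \leftrightarrow a_7$, $a_4 \leftrightarrow a_8$, both fixing $P_{48}$ setwise) to cut the case count.

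Granting this, the component version is the standard minimality step: if some component of $G \setminus \Pi$ had an attachment meeting two paths, a minimal connected such subgraph would be an induced path $D = u \dots v$ whose ends attach to distinct paths and whose interior has no neighbor in $\Pi$; routing a cycle through $D$ and suitable arcs of $\Pi$ then gives a cycle with a unique chord in each case. Once every component attaches within a single path, the conclusion is immediate: a component attaching to a single node of $\Pi$ gives a 1-cutset, and otherwise a component attaches within one path $P$ and the two ends of $P$ form a proper 2-cutset (they are non-adjacent of degree at least three, and each side carries an end-to-end path).

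I expect the single-node claim to be the main obstacle, just as Claim~\ref{c:h3pc1} was for Claim~\ref{c:hexagonal3PC}. The delicate sub-case is when $u$ has neighbors in the interiors of several paths simultaneously: there one argues edge by edge that the absence of any one of the ``expected'' adjacencies creates a unique-chord cycle, thereby forcing all of them, and then recognizes $G[\Pi \cup \{u\}]$ as the Heawood graph (contradicting Claim~\ref{c:heawood}), or as the Petersen graph (contradicting Claim~\ref{c:petersen}), or as one of the configurations already excluded in Claims~\ref{c:Petersen3pc}, \ref{c:hexagonal3PC} and~\ref{c:octogonal4PC}. Keeping track of which adjacencies to force, and which earlier claim each completed picture violates, is where the real work lies; the remaining cases are routine chord-chasing.
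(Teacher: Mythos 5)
Your skeleton matches the paper's proof: the same preliminary disposal of $G=G[\Pi]$ (Heawood minus a $P_3$, or a proper 2-cutset from the ends of a long path), a bound on the neighbors of a single outside node, a minimal connected attaching set $D$ reduced to a path, and the corner pairs as proper 2-cutsets at the end. Your two symmetries of the configuration are genuine automorphisms, and charging a corner to both paths through it is the right bookkeeping. But there is one concrete misstep and one large deferral. The misstep: you assert that once the single-node claim is granted, the component step closes because ``routing a cycle through $D$ and suitable arcs of $\Pi$ then gives a cycle with a unique chord in each case.'' This fails in the hardest case, where the two ends of $D$ attach to two hole-paths with no common end (say $P_{13}$ and $P_{57}$). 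If $u$ has a unique neighbor $x$ in the interior of $P_{13}$ with $P_{13}$ of length two, and $v$ has a unique neighbor $y$ in the interior of $P_{57}$ with $P_{57}$ of length two, then every cycle through $D$ and arcs of $\Pi$ that you can route is chordless (e.g.\ $xuDvya_5P_{15}a_1x$ is a hole), so no unique-chord cycle exists. The paper's proof first forces exactly this situation by chord-chasing and then observes that $xuDvy$, $P_{15}$, $P_{37}$, $P_{48}$ form the configuration of Claim~\ref{c:octogonal4PC} --- and this reduction is needed for $D$ of \emph{every} length, not only for the single-node case where you placed it. So the appeal to Claim~\ref{c:octogonal4PC} must live in the component step, not only in the single-node claim.

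The deferral: the entire case analysis --- showing an outside node has at most one neighbor in $P_{48}$ and at most one in each hole-path, splitting on whether the two attached hole-paths share a corner, and in the shared-corner and $P_{48}$ cases exhibiting the explicit unique-chord cycles --- is exactly where the paper's proof spends its effort, and you leave it as ``routine chord-chasing.'' It is doable with the tools you name (Lemma~\ref{c:neighHole} applied to the hole $P_{13}\cup P_{15}\cup P_{37}\cup P_{57}$ and to the holes formed by $P_{48}$ with each other path, plus Claims~\ref{c:noTriangle} and~\ref{c:noSquare}), but until those cycles are written down the proof is not complete; note also that the paper never needs to recognize the Heawood or Petersen graph here, only the configuration of Claim~\ref{c:octogonal4PC}.
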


  \begin{proof}
    We put $\Pi = P_{13} \cup P_{15} \cup P_{48} \cup P_{37} \cup
    P_{57}$.  Either $G[\Pi]$ is the Heawood graph with three nodes
    inducing a $P_3$ deleted (when the five paths have length two), or
    $G[\Pi]$ has a proper 2-cutset (when one of the paths is of length
    at least three, the 2-cutset is formed by the ends of that path).
    Hence we may assume that there are nodes in $G\setminus \Pi$.  

    \begin{claim}
      \label{c:5pc1}
      A node of $G\sm \Pi$ has at most two neighbors in $\Pi$. 
    \end{claim}

    \begin{proofclaim}
      Let $u$ be a node of $G \sm \Pi$ and suppose that $u$ has more
      than two neighbors in~$\Pi$.  By Lemma~\ref{c:neighHole} and
      since $P_{13} \cup P_{15} \cup P_{37} \cup P_{57}$ is a hole,
      $u$ has at most two neighbors among these paths. So, $u$ must
      have one neighbor in $P_{48}$, and this neighbor is unique since
      the union of $P_{48}$ with any of the other paths yields a
      hole. For the same reason, $u$ has a unique neighbor in exactly
      two paths among $P_{13}, P_{15}, P_{37}, P_{57}$. So there are
      two cases up to symmetry: either $u$ has neighbors in two paths
      among $P_{13}, P_{15}, P_{37}, P_{57}$ that have a common end,
      or $u$ has neighbors in two paths among $P_{13}, P_{15}, P_{37},
      P_{57}$ that have no common ends.

      In the first case, we may assume that $u$ has neighbors $x \in
      P_{37}$, $y \in P_{48}$ and $z \in P_{13}$. Note that $x \neq
      a_3$ and $z \neq a_3$, for otherwise $P_{13}$ or $P_{37}$ would
      contain two neighbors of $u$.  Suppose $y \neq a_4$. If $z \neq
      a_1$ then $x u y P_{48} a_8 a_1 P_{15} a_5 P_{57} a_7 P_{37} x$
      is a cycle with a unique chord, a contradiction.  If $z=a_1$
      then by Claim~\ref{c:noSquare}, $x \neq a_7$ and hence
      $xuyP_{48} a_8 a_1 P_{13} a_3 P_{37} x$ is a cycle with a unique
      chord, a contradiction.  So $y = a_4$. But then, since $G$ does
      not contain a square, $x a_3 \notin E(G)$ and hence $u x P_{37}
      a_7 a_8 a_1 P_{13} a_3 a_4 u$ is a cycle with a unique chord, a
      contradiction.

      In the second case, we may assume that $u$ has neighbors $x$ in
      $P_{13}$, $y$ in $P_{48}$ and $z$ in $P_{57}$.  If $x=a_1$ then
      the previous case applies.  Hence we may assume $x\neq a_1$ and
      symmetrically $z \neq a_5$.  So, $u x P_{13} a_3 a_4 P_{48} a_8
      a_7 P_{57} z u$ is a cycle with a unique chord (namely $uy$).
    \end{proofclaim}

    \begin{claim}
      \label{c:5pc2}
      The attachment of any component of $G\setminus \Pi$ is included
      in one of the sets $P_{13}, P_{15}, P_{37}, P_{57}, P_{48}$.
    \end{claim}

    \begin{proofclaim}
      Else let $D$ be a connected induced subgraph of $G\setminus
      \Pi$, whose attachment is not contained in one of the sets, and
      is minimal with respect to this property. By the choice of $D$
      the following hold.  $D$ is a path, possibly of length zero,
      with ends $u, v$, where $u$ has neighbors in one of the sets
      $P_{13}, P_{15}, P_{37}, P_{57}, P_{48}$ that we denote by
      $X_u$, and $v$ has neighbors in another one, say $X_v$. No
      interior node of $D$ has neighbors in $\Pi$.  If $u\neq v$ then
      $u$ has neighbors only in $X_u$ and $v$ only in $X_v$.  If
      $u=v$ then by~(\ref{c:5pc1}) $u$ has neighbors only in $X_u
      \cup X_v$.

      If $X_u = P_{48}$ then up to symmetry we may assume $X_v =
      P_{57}$. Let $x$ be the neighbor of $u$ closest to $a_8$ along
      $P_{48}$. If $x\neq a_4$ then let $y$ be the neighbor of $v$
      closest to $a_7$ along $P_{57}$. Then $u D v y P_{57}
      a_7 P_{37} a_3 P_{13} a_1 a_8 P_{48} x u$ is a cycle with a
      unique chord.  If $x = a_4$ then let $y$ be the neighbor of $v$
      closest to $a_5$ along $P_{57}$. Then $u D v y P_{57}
      a_5 P_{15} a_1 P_{13} a_3 a_4 u$ is a cycle with a unique
      chord. So, $X_u \neq P_{48}$, and symmetrically $X_v \neq
      P_{48}$.

      If $X_u$, $X_v$ are paths with a common end then we may assume
      $X_u = P_{37}$ and $X_v = P_{57}$. Let $x$ be the neighbor of
      $u$ closest to $a_3$ along $P_{37}$ and $y$ the neighbor of $v$
      closest to $a_5$ along $P_{57}$. We note that $x, y \neq a_7$
      for otherwise the attachment of $D$ is a single path $P_{37}$ or
      $P_{57}$ contrary to the definition of~$D$. So, $x u D v y
      P_{57} a_5 a_4 P_{48} a_8 a_1 P_{13} a_3 P_{37} x$ is a cycle
      with a unique chord.
    
      If $X_u$, $X_v$ are paths with no common end then we may assume
      $X_u = P_{13}$ and $X_v = P_{57}$. We claim that $u$ has a
      unique neighbor in $P_{13}$, that is in the interior of
      $P_{13}$, and that $P_{13}$ has length two. Else, up to the
      symmetry between $a_1$ and $a_3$ we may assume that the neighbor
      $x$ of $u$ closest to $a_3$ along $P_{13}$ is not $a_1$ and is
      not adjacent to $a_1$.  Let $y$ be the neighbor of $v$ closest
      to $a_5$ along $P_{57}$.  If $y=a_7$ then the previous case
      applies, i.e. the neighbors of $u$ and $v$ in $\Pi$ are
      contained in the paths with a common end. So $y\neq a_7$.  If
      $u$ is not adjacent to $a_1$, then
      $xuDvyP_{57}a_5P_{15}a_1a_8P_{48}a_4a_3P_{13}x$ is a cycle with
      a unique chord. So $u$ is adjacent to $a_1$.  By~(\ref{c:5pc1}),
      $a_1$ and $x$ are the only neighbors of $u$ in $\Pi$.  But then
      $xuDvyP_{57}a_5P_{15}a_1P_{13}x$ is a cycle with a unique chord.
      Our claim is proved, and similarly we can prove that $v$ has a
      unique neighbor in $P_{57}$, that this neighbor is in the
      interior of $P_{57}$ and that $P_{57}$ has length two. Now we
      observe that the paths $x u D v y$, $P_{15}, P_{37}, P_{48}$
      have the same configuration as those in
      Claim~\ref{c:octogonal4PC}, a contradiction.
    \end{proofclaim}

    By~(\ref{c:5pc2}), one of $\{a_1, a_5\}$, $\{a_1, a_3\}$, $\{a_4,
    a_8\}$, $\{a_5, a_7\}$ $\{a_3, a_7\}$ is a proper 2-cutset.
  \end{proof}

  \begin{CL}
    \label{c:2chords}
    We may assume that $G$ does not contain a cycle with exactly two
    chords.
  \end{CL}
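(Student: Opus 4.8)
The plan is to take a cycle $\Gamma$ of $G$ with exactly two chords $e_1,e_2$ and to argue according to the relative position of $e_1$ and $e_2$ along $\Gamma$, disposing of every configuration except the one in which the two chords \emph{cross}. Write $e_1=pq$. If $e_1$ and $e_2$ do not cross---and this includes the case where they share an endpoint---then the two ends of $e_2$ both lie on the closure of a single one of the two $p$--$q$ arcs $R$ of $\Gamma$ (in the shared-endpoint case one of them is $p$ itself). The subcycle formed by $R$ together with the edge $e_1$ then contains both ends of $e_2$, which are non-adjacent on it, so $e_2$ is a chord of this subcycle; and it is its \emph{only} chord, since the sole chords of $\Gamma$ are $e_1$ (now used as an edge) and $e_2$. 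This is a cycle with a unique chord, contradicting $G\in\mathcal C$. Hence $e_1$ and $e_2$ cross.

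So let the four ends be $x_1,x_2,y_1,y_2$ in this cyclic order along $\Gamma$, with $e_1=x_1y_1$, $e_2=x_2y_2$, and let $A,B,P,Q$ be the arcs $x_1\!\to\!x_2$, $x_2\!\to\!y_1$, $y_1\!\to\!y_2$, $y_2\!\to\!x_1$. Then $\Pi:=V(\Gamma)$ with its two chords is a subdivision of $K_4$ whose only unsubdivided edges are $e_1,e_2$. Using \ref{c:noTriangle} I would show that no two arcs sharing a branch node are both of length one, and using \ref{c:noSquare} that no two opposite arcs are both of length one and, more generally, that a length-one arc together with a neighbouring length-two arc and the incident chord would close a square; this forces every arc to have length at least two, the single symmetric exception being the graph in which all four arcs have length exactly two, which is the Petersen graph minus two adjacent nodes, hence an induced subgraph of the Petersen graph and so basic.

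The remaining and main work is the attachment analysis for $\Pi$, carried out in the style of Claims~\ref{c:Petersen3pc}--\ref{c:5PC}. First I would prove, via Lemma~\ref{c:neighHole} applied to the holes contained in $\Pi$ (for instance $x_1y_1Bx_2y_2Qx_1$ and $x_1Ax_2y_2Py_1x_1$) together with \ref{c:noTriangle} and \ref{c:noSquare}, that each node of $G\setminus\Pi$ has at most two neighbours in $\Pi$ and that these cannot sit on two different arcs, or on an arc and a branch node, without creating a cycle with a unique chord, the Petersen or Heawood graph (contradicting \ref{c:petersen}, \ref{c:heawood}), or one of the configurations already excluded in \ref{c:Petersen3pc}--\ref{c:5PC}. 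Then I would take a minimal connected $D\subseteq G\setminus\Pi$ whose attachment is not contained in a single arc and derive a contradiction in the same way, so that every component of $G\setminus\Pi$ attaches inside a single arc or at a single branch node. A component at a branch node yields a $1$-cutset; a component inside the interior of an arc, say $A$, together with the ends $x_1,x_2$ of that arc (non-adjacent, of degree at least three), yields a proper $2$-cutset, both sides containing an $x_1x_2$-path. The delicate point, and the step I expect to be the main obstacle, is precisely this crossing case: unlike the non-crossing ones it produces no unique chord by itself, so it must be converted into a decomposition, and one has to handle the degenerate short-arc subcases (where deleting the two ends of an arc would leave only a single interior node on one side) by checking that such a $\Pi$ is either killed by square-freeness or is one of the small induced subgraphs of the Petersen graph, and so is basic rather than decomposable.
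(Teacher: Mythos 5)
Your overall strategy coincides with the paper's: reduce to crossing chords (the non-crossing case giving a cycle with $e_2$ as unique chord, exactly as in the paper), force all four arcs to have length at least two, observe that the bare configuration is either the Petersen graph minus two adjacent nodes or already has a proper 2-cutset, and then run an attachment analysis invoking the configurations of Claims~\ref{c:hexagonal3PC} and~\ref{c:5PC} to produce a proper 2-cutset at the ends of one arc. But one step of your plan fails as written: your argument that every arc has length at least two. Triangle- and square-freeness only kill a length-one arc whose neighbouring or opposite arc has length one or two; they say nothing about, say, $cb$ being an edge while $P_{ac}$, $P_{bd}$ and $P_{da}$ all have length at least three, and in that situation there is no triangle and no square to be found. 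The missing observation is that a length-one arc is itself the \emph{unique chord} of another cycle: if $cb\in E(G)$, then travelling along $P_{ac}$ from $a$ to $c$, crossing by the chord $cd$, travelling along $P_{bd}$ from $d$ to $b$, and returning by the chord $ba$ yields a cycle whose only chord is $cb$ --- unless $da$ is also an edge, in which case $G[\{a,b,c,d\}]$ is a square or contains a triangle. The crossing chords must be used as cycle edges here; squares alone do not suffice.

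A second omission concerns a node of $G\setminus C$ with two neighbours on the \emph{same} arc. Such a node does not by itself threaten the conclusion (its attachment is still confined to one arc), but it spoils the case analysis for the minimal connecting path $D$: the unique-chord cycles you intend to build could acquire extra chords from the second neighbour, and nothing in Lemma~\ref{c:neighHole} excludes this configuration, since the two neighbours need not lie on a common hole of the configuration. The paper neutralises it by choosing $C$ with the \emph{fewest nodes} among all cycles with exactly two chords: if $u\notin C$ had two neighbours $x,y$ on one arc, the $xy$-subpath (of length greater than two by triangle- and square-freeness) could be shortcut through $u$, producing a strictly smaller cycle with exactly two chords. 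With that device every end of $D$ has exactly one neighbour in $C$, which is what keeps the final case analysis (arcs with a common end versus arcs without, the latter leading to Claims~\ref{c:hexagonal3PC} and~\ref{c:5PC}) short. Your sketch needs this minimality choice, or some substitute for it, to go through.
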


  \begin{proof}
    For let $C$ be a cycle in $G$ with exactly two chords $ab, cd$. We
    may assume up to the symmetry between $c$ and $d$ that $a, c, b,
    d$ appear in this order along $C$ for otherwise there is a cycle
    with a unique chord. We denote by $P_{ac}$ the unique path in $C$
    from $a$ to $c$ that does not go through $b, d$. We define
    similarly $P_{cb}$, $P_{bd}$, $P_{da}$.  We assume that $C$ is a
    cycle with exactly two chords in $G$ that has the fewest number of
    nodes.

    If $P_{cb}$ has length one then $P_{ac} \cup P_{bd}$ is a cycle
    with a unique chord unless $P_{ad}$ has also length one. But then
    $G[a, b, c, d]$ is a square or contains a triangle, a
    contradiction to Claims~\ref{c:noSquare} and \ref{c:noTriangle}. So
    $P_{cb}$ has length at least two and symmetrically, $P_{ac}$,
    $P_{bd}$, $P_{da}$ have all length at least two. 

    Note  that either $C$ is the Petersen graph with two adjacent
    nodes deleted (when $C$ is on eight nodes), or $C$ has a proper
    2-cutset (when $C$ is on at least nine nodes). Hence we may assume
    that there are nodes in $G\setminus C$.

    \begin{claim}
      \label{c:2chords1}
      A node of $G\sm C$ has at most two neighbors in $C$, and if it
      has two neighbors in $C$ then these two neighbors are not
      included in one of the sets $P_{ac}, P_{cb}, P_{bd}, P_{da}$.
    \end{claim}
    
    \begin{proofclaim}
      Let $u$ be a node of $G\sm C$ that has at least three neighbors
      in $C$. Note that by Lemma~\ref{c:neighHole}, $u$ has at most
      one neighbor in each of $P_{ac}$, $P_{cb}$, $P_{bd}$, $P_{da}$
      because the union of any two of them forms a hole.  So, up to
      symmetry we may assume that $u$ has neighbors $x \in P_{ad}$, $y
      \in P_{ac}$, $z \in P_{bd}$ (and possibly one more in
      $P_{cb}$). If $y = c$ then $x\neq d$ and $xd \notin E(G)$ for
      otherwise $G$ contains a square or a triangle, contradicting
      Claims~\ref{c:noSquare} and \ref{c:noTriangle}, and hence $u c d
      P_{bd} b a P_{ad} x u$ has a unique chord (namely $uz$). So
      $c\neq y$ and symmetrically, $b\neq z$. Hence $y u z P_{bd} d
      P_{ad} a P_{ac} y $ is a cycle with a unique chord (namely
      $ux$), a contradiction. So $u$ has at most two neighbors in $C$.

      Let $x$ and $y$ be two neighbors of $u$ in $C$, and suppose that
      they both belong to the same path, say $P_{bd}$. W.l.o.g.\ $x$
      is closer to $b$ on $P_{bd}$. By Claims~\ref{c:noSquare} and
      \ref{c:noTriangle}, the $xy$-subpath $P$ of $P_{bd}$ is of
      length greater than 2. Let $C'$ be the cycle induced by $(C
      \setminus P) \cup \{ x,y \}$. Then $C'$ is a cycle with exactly
      two chords that has fewer nodes than $C$, contradicting our
      choice of $C$.
    \end{proofclaim}

    \begin{claim}
      \label{c:2chords2}
      The attachment of any component of $G\setminus C$ is included in
      one of the sets $P_{ac}, P_{cb}, P_{bd}, P_{da}$.
    \end{claim}

    \begin{proofclaim}
      Else let $D$ be a connected induced subgraph of $G\setminus C$,
      whose attachment is not contained in one of the sets and is
      minimal with respect to this property.  By the choice of $D$ the
      following hold.  $D$ is a path possibly of length zero, with
      ends $u, v$, where $u$ has neighbors in one of the sets $P_{ac},
      P_{cb}, P_{bd}, P_{da}$ that we denote by $X_u$, and $v$ has
      neighbors in another one, say $X_v$.  No interior node of $D$
      has neighbors in $C$.  If $u\neq v$ then $u$ has neighbors only
      in $X_u$ and $v$ only in $X_v$.  If $u=v$, then
      by~(\ref{c:2chords1}) $u$ has neighbors only in $X_u \cup X_v$.
      Let $x$ be a neighbor of $u$ in $X_u$, and $y$ a neighbor of $v$
      in $X_v$.  By~(\ref{c:2chords1}), $(N(u) \cup N(v)) \cap C=\{
      x,y \}$.

      If $X_u$ and $X_v$ share a common end then up to symmetry we
      assume $X_u = P_{ac}$, $X_v = P_{ad}$.  Neither $x$ nor $y$
      coincides with $a$ for otherwise the attachment of $D$ over $C$
      is in $P_{ac}$ or $P_{ad}$, contrary to the definition
      of~$D$. So, $u D v y P_{ad} d P_{bd} b P_{bc} c P_{ac} x u$ is a
      cycle with a unique chord.

      So $X_u$ and $X_v$ do not share a common end, hence up to
      symmetry we assume $X_u = P_{ac}$, $X_v = P_{bd}$.  By the
      previous paragraph, we may assume $x \not \in \{ a,c \}$ and $y
      \not \in \{ b,d \}$.  If $xa, yb \notin E(G)$ then $x P_{ac} c
      P_{cb} b a P_{ad} d P_{bd} y v D u x$ is a cycle with a unique
      chord. So, up to symmetry we assume $xa \in E(G)$.  If $y d
      \notin E(G)$ then $x u D v y P_{bd} b a P_{ad} d c P_{ac} x$ is
      a cycle with a unique chord, a contradiction. So, $yd \in E(G)$.
      Since $xabP_{bc}cdyvDux$ cannot be a cycle with a unique chord,
      $xc, yb$ are either both in $E(G)$ or both not in $E(G)$. In the
      first case, the three paths $x u D v y$, $P_{ad}, P_{bc}$ have
      the same configuration as those in Claim~\ref{c:hexagonal3PC}.
      In the second case, the five paths $x u D v y$, $P_{ad}, P_{bc},
      x P_{ac} c, y P_{bd} b$ have the same configuration as those in
      Claim~\ref{c:5PC}.
    \end{proofclaim}
    
    By~(\ref{c:2chords2}) one of 
    $\{a, c\}$, $\{c, b\}$, $\{b, d\}$, $\{d, a\}$ is a proper
    2-cutset.
  \end{proof}

  \begin{CL}
    \label{c:3chords}
    We may assume that $G$ does not contain a cycle with exactly three
    chords.
  \end{CL}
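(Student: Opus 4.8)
The plan is to show that a cycle with exactly three chords simply cannot occur, so that—unlike the two-chord case of Claim~\ref{c:2chords}, which produced a basic graph or a proper 2-cutset—here there is nothing to construct, only a contradiction to reach. So I would suppose for contradiction that $G$ contains a cycle $C$ whose chords are exactly $a_1b_1$, $a_2b_2$ and $a_3b_3$, and I would not even need to choose $C$ minimal.

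First I would pin down the cyclic arrangement of the six endpoints. The key observation is that for any chord $e_i=a_ib_i$ and either of the two arcs $A$ of $C$ joining $a_i$ to $b_i$, the cycle $A+e_i$ is a cycle of $G$ whose chords are precisely those chords of $C$ having both endpoints in $V(A)\cup\{a_i,b_i\}$ (recall that the only edges of $G$ inside $V(C)$ are the edges and the three chords of $C$). This number of chords cannot be one, since that would be a cycle with a unique chord and $G\in{\cal C}$; and it cannot be two, by Claim~\ref{c:2chords}. Hence it is zero. Applying this to every chord and every arc shows that no chord of $C$ lies inside an arc determined by another chord; equivalently, the three chords pairwise cross. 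In particular they have six distinct endpoints, since a shared endpoint would, for a suitable arc $A$, make one chord a chord of the corresponding cycle $A+e_i$, which we have just ruled out. Up to relabelling the chords and choosing an orientation, the endpoints therefore occur in the cyclic order $a_1,a_2,a_3,b_1,b_2,b_3$ along $C$, this being the only way six cyclically ordered points can be matched so that all three pairs cross.

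It then remains to exhibit a forbidden subconfiguration. I would consider the cycle $C'$ obtained by following $C$ from $a_1$ through $a_2$ to $a_3$, then the chord $a_3b_3$, then $C$ from $b_3$ through $b_2$ to $b_1$, and finally the chord $b_1a_1$; this uses the arc from $a_1$ to $a_3$ (the one containing $a_2$), the arc from $b_3$ to $b_1$ (the one containing $b_2$), and the two chords $a_3b_3$ and $a_1b_1$, while omitting the arcs from $a_3$ to $b_1$ and from $b_3$ to $a_1$. All six endpoints $a_1,\dots,b_3$ lie on $C'$; the interiors of the two traversed arcs contain no endpoint of any chord; and of the three chords of $C$ exactly two, namely $a_1b_1$ and $a_3b_3$, are used as edges of $C'$. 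Consequently the only chord of $C'$ is $a_2b_2$ (with $a_2$ on the first arc and $b_2$ on the second, so that they are non-adjacent on $C'$), and thus $C'$ is a cycle with a unique chord, contradicting $G\in{\cal C}$. This contradiction proves the claim.

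The conceptual core is the crossing argument of the second step, which cleanly reduces the configuration of the chords to Claim~\ref{c:2chords} together with the defining property of ${\cal C}$; this is where the real content lies. The main obstacle is then purely a matter of bookkeeping in the last step: one must check that $C'$ genuinely has $a_2b_2$ as its \emph{sole} chord, i.e.\ that all six branch endpoints sit on $C'$, that two of the three chords have degenerated into edges of $C'$, and that the interiors of the traversed arcs introduce nothing new. I expect no difficulty beyond this verification, and in particular no auxiliary analysis of how vertices outside $C$ attach, which is what made the earlier claims long.
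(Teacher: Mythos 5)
Your step 2 is correct and is essentially the paper's opening move: closing each arc of a chord with that chord gives a cycle whose number of chords must be zero, since one chord is excluded by $G\in{\cal C}$ and two by Claim~\ref{c:2chords}; hence the three chords pairwise cross, have six distinct endpoints, and sit in the cyclic order $a_1,a_2,a_3,b_1,b_2,b_3$. The gap is in step 3, precisely in the part you dismiss as bookkeeping. A chord of $C'$ is any edge of $G$ with both endpoints on $C'$ that is not an edge of $C'$, and this includes \emph{edges of $C$ itself} joining the two traversed arcs: the two omitted arcs of $C$ (from $a_3$ to $b_1$ and from $b_3$ to $a_1$) have both endpoints on $C'$, and nothing in step 2 prevents such an arc from having length one. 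If, say, $a_3b_1\in E(C)$, then $a_3b_1$ is a second chord of $C'$ and your contradiction disappears. This degenerate case cannot be dodged by keeping a different chord as the "survivor": applying Claim~\ref{c:2chords} to the cycle that uses all three chords as edges together with the three arcs $a_1{\to}b_3$, $a_2{\to}a_3$, $b_1{\to}b_2$ shows that if one of the alternating arcs $a_1a_2$, $a_3b_1$, $b_2b_3$ has length one then all three do, and each of the three candidate cycles $C'$ omits exactly one arc from this triple, so each acquires an extra chord.

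This residual configuration is not a phantom: it is exactly the subdivided-prism configuration of Claim~\ref{c:hexagonal3PC} (three long paths joined by a hexagon of edges), which is realizable inside graphs of ${\cal C}$ — the Petersen graph minus a node is the instance where all three paths have length two, and it contains a cycle with exactly three chords. So a bare "cycle with a unique chord" contradiction cannot exist here; the paper disposes of this case by checking that the alternating short arcs force square-freeness constraints (via Claim~\ref{c:noSquare}) and then invoking Claim~\ref{c:hexagonal3PC}, whose own proof produced a basic graph or a proper 2-cutset rather than a contradiction with ${\cal C}$-membership. (Alternatively, one can finish by exhibiting in the degenerate case a cycle with exactly two chords — take the arcs $a_1{\to}b_3$ and $a_2{\to}a_3$, the edges $b_3b_2$ and $a_3b_1$, and the chords $b_2a_2$ and $b_1a_1$; its only chords are the edge $a_1a_2$ and the chord $a_3b_3$ — contradicting Claim~\ref{c:2chords}. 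But some such additional argument is indispensable, and your write-up contains none.)
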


  \begin{proof}
    Let $C$ be a cycle in $G$ with exactly three chords $ab$, $cd$,
    $ef$ say. Up to symmetry we may assume that $a, c, e, b, d, f$
    appear in this order along the cycle and are pairwise distinct for
    otherwise $C$ contains a cycle with a unique chord. We denote by
    $P_{ac}$ the unique path from $a$ to $c$ in $C$ that does not go
    through $e, b, d, f$. We define similarly $P_{ce}, P_{eb}, P_{bd},
    P_{df}, P_{fa}$. If $G[\{a, b, c, d, e, f\}]$ contains only three
    edges then $P_{af} \cup P_{fd} \cup P_{ce} \cup P_{eb}$ is a cycle
    with a unique chord (namely $fe$), a contradiction. Hence, up to
    symmetry we may assume $ac \in E(G)$. Now $P_{af} \cup P_{db} \cup
    P_{ce}$ is a cycle with one, two or three chords : $ac$ and
    possibly $fd$ and $eb$. By Claim~\ref{c:2chords}, this cycle must
    have three chords, so $eb, fd \in E(G)$. Note that $bd \notin
    E(G)$ since $G$ contains no square by Claim~\ref{c:noSquare}, and
    similarly $af, ce \notin E(G)$. Now we observe that the paths
    $P_{af}, P_{ce}, P_{bd}$ have the same configuration as those in
    Claim~\ref{c:hexagonal3PC}, a contradiction.
  \end{proof}

  \begin{CL}
    \label{c:Nchords}
    We may assume that $G$ does not contain a cycle with at least one
    chord.
  \end{CL}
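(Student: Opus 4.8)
The plan is to argue by contradiction. Suppose $G$ contains a cycle with at least one chord, and among all such cycles choose one, $C$, with the \emph{fewest} nodes. Let $k$ be the number of chords of $C$. Since $G\in{\cal C}$, $C$ cannot have exactly one chord, and by Claims~\ref{c:2chords} and~\ref{c:3chords} it cannot have exactly two or three chords either; hence $k\ge 4$. The heart of the proof will be to extract from $C$ a strictly shorter cycle having exactly one chord, which contradicts $G\in{\cal C}$.

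First I would show that the chords of $C$ pairwise cross. Let $uv$ be any chord; as $u$ and $v$ are non-adjacent on $C$, it splits $C$ into two arcs $A_1,A_2$, each with at least one interior node. Each cycle $A_i\cup\{uv\}$ has strictly fewer nodes than $C$, so by minimality it is chordless; thus no chord of $C$ other than $uv$ has both ends on a single $A_i$, which means every other chord has one end in the interior of $A_1$ and one in the interior of $A_2$, i.e. crosses $uv$. Applying this to every chord shows the chords pairwise cross, and a standard counting argument then forces their $2k$ endpoints to occur in a cyclic order $p_1,p_2,\dots,p_{2k}$ along $C$ with the chords being exactly $p_ip_{i+k}$ for $1\le i\le k$.

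Next I would build the short cycle, using $k\ge 4$. Let $E$ be the cycle that follows the arc of $C$ from $p_1$ to $p_3$ (through $p_2$), then the chord $p_3p_{3+k}$, then the arc of $C$ from $p_{3+k}$ back to $p_{1+k}$ (through $p_{2+k}$), and finally the chord $p_{1+k}p_1$. The node set of $E$ meets $\{p_1,\dots,p_{2k}\}$ exactly in $p_1,p_2,p_3,p_{1+k},p_{2+k},p_{3+k}$, and the two arcs of $C$ strictly between $p_3$ and $p_{1+k}$, and between $p_{3+k}$ and $p_1$ (containing $p_4,\dots,p_k$ and $p_{4+k},\dots,p_{2k}$, hence nonempty precisely because $k\ge4$) are omitted in full. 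Consequently the only edges of $C$ with both ends on $E$ are the arc edges already used together with the chords $p_1p_{1+k}$ and $p_3p_{3+k}$ (now edges of $E$) and $p_2p_{2+k}$. Therefore $p_2p_{2+k}$ is the unique chord of $E$, contradicting $G\in{\cal C}$.

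The step I expect to be most delicate is verifying that $E$ really has exactly one chord: one must check both that no \emph{additional} chord of $C$ lands on $E$, and, more subtly, that no edge of $C$ lying on a skipped arc reappears as a chord of $E$. This last point is exactly where $k\ge 4$ is needed; for $k=3$ the analogous construction reuses all six endpoints, and the two skipped arc edges resurface as chords, which is why the three-chord case had to be settled separately in Claim~\ref{c:3chords}. Once a contradiction is reached for every value of $k$, no cycle of $G$ can have a chord, which proves the claim.
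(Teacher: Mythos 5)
Your proposal is correct and follows essentially the same strategy as the paper's proof: take a minimum counterexample cycle, use minimality to show its (at least four, by Claims~\ref{c:2chords} and~\ref{c:3chords}) chords pairwise cross, and then reroute along two crossing chords to produce a forbidden shorter cycle. The only cosmetic difference is that you derive the final contradiction from $E$ having a \emph{unique} chord (hence $G\notin{\cal C}$), whereas the paper's rerouted cycle is merely smaller with \emph{at least one} chord, contradicting minimality.
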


  \begin{proof}
    Let $C$ be a cycle in $G$ with at least one chord $ab$. We choose
    $C$ minimal with this property. Cycle $C$ must have another chord
    $cd$, and we may assume that $a, d, b, c$ are pairwise distinct
    and in this order along $C$ for otherwise $C$ contains a cycle
    with at least one chord that contradicts the minimality of $C$. By
    Claim~\ref{c:2chords}, $C$ must have another chord $ef$, and again
    we may assume that $a, e, d, b, f, c$ are pairwise distinct and in
    this order along $C$ because of the minimality of $C$. By
    Claim~\ref{c:3chords}, $C$ must have again another chord $gh$, and
    again we may assume that $a, g, e, d, b, h, f, c$ are pairwise
    distinct and in this order along $C$ because of the minimality of
    $C$. Now, the path from $a$ to $f$ along $C$ that goes through $c$
    and the path from $e$ to $b$ along  $C$ that goes through $d$
    form a cycle smaller than $C$ with at least one chord (namely
    $cd$), a contradiction. 
  \end{proof}

  A \emph{non-induced path} $P$ in a graph $G$ is a sequence of
  distinct nodes $v_1\ldots v_n$ such that for $i=1, \ldots ,n-1$,
  $v_iv_{i+1}$ is an edge (these are the \emph{edges of the
    path}). There might be other edges: the \emph{chords of the
    path}.

  \begin{CL}
    \label{c:basket}
    We may assume that $G$ does not contain the following
    configuration: five non-induced paths $P = a \dots c$, $Q = a
    \dots c$, $R = b \dots d$, $S = b \dots d$, $X = c \dots d$,
    node-disjoint except for their ends, of length at least one,
    except for $X$ that can be of length zero or more, together with
    edge $ab$.
  \end{CL}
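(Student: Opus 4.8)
The plan is to reuse, almost verbatim, the template of Claims~\ref{c:Petersen3pc}--\ref{c:5PC}. Write $\Pi=P\cup Q\cup R\cup S\cup X$ and call $a,b,c,d$ the branch nodes. The first observation is that, by Claim~\ref{c:Nchords}, the configuration sits in $G$ as an induced subgraph: each of $P\cup Q$, of $R\cup S$, and of the cycles running through $ab$ and $X$ (such as $a\,P\,c\,X\,d\,R\,b\,a$) is a cycle, hence chordless, so the five paths are induced and no edge joins the interiors of two distinct paths. Consequently each of these cycles is a hole of length at least five (Claims~\ref{c:noTriangle},~\ref{c:noSquare}), and in particular the interior of $P\cup Q$ contains at least three nodes.

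Next I would show that $G[\Pi]$ already decomposes, so that we may assume there are nodes in $G\setminus\Pi$. The pair $\{a,c\}$ separates the interior of $P\cup Q$ from the rest of $\Pi$; both $a$ and $c$ have degree at least three, and each side carries an $ac$-path ($P$ on one side, $a\,b\,R\,d\,X\,c$ on the other), so when $a$ and $c$ are non-adjacent this is a proper $2$-cutset. The degenerate case in which a path has length one, forcing an edge such as $ac$, is handled by Lemma~\ref{abedge}, which produces a $1$-cutset.

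The heart of the proof is then the usual pair of attachment claims. The first, in the spirit of the opening claim inside the proof of Claim~\ref{c:Petersen3pc}, states that a node of $G\setminus\Pi$ has neighbors in at most one of the parts $\{a\},\{b\},\{c\},\{d\}$ and the interiors of $P,Q,R,S,X$. The second promotes this to connected pieces: the attachment of any component of $G\setminus\Pi$ lies within a single part. Granting both, the conclusion follows as in Claim~\ref{c:Petersen3pc}: a component attached to a single branch node yields a $1$-cutset, and a component attached to the interior of one of the five paths makes the two ends of that path a proper $2$-cutset (those ends being non-adjacent and of degree at least three, with Lemma~\ref{abedge} covering the adjacent degenerate case). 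Equivalently, $\{a,c\}$ stays a proper $2$-cutset of all of $G$.

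The main obstacle is the first attachment claim, precisely as for the earlier configurations. I would drive it with Lemma~\ref{c:neighHole} applied to the many holes of $\Pi$ ($P\cup Q$, $R\cup S$, $P\cup X\cup R\cup\{ab\}$, $Q\cup X\cup S\cup\{ab\}$, and their relatives): each bounds the neighbors of a would-be bad node $u$ by two non-adjacent vertices, and intersecting the constraints from several holes should confine $N(u)\cap\Pi$ to one part. For the few neighbor patterns that survive, I expect to either exhibit a cycle with a unique chord outright, or reconstruct an already-forbidden object --- the Petersen graph (Claim~\ref{c:petersen}), the Heawood graph (Claim~\ref{c:heawood}), or one of the configurations of Claims~\ref{c:Petersen3pc},~\ref{c:hexagonal3PC},~\ref{c:octogonal4PC},~\ref{c:5PC} --- and invoke the matching claim. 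Using the symmetries of the basket ($P\leftrightarrow Q$, $R\leftrightarrow S$, and the swap exchanging the two thetas) to prune the case list is what should keep this step under control.
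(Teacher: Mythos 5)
Your opening step matches the paper: Claim~\ref{c:Nchords} forces $G[\Pi]$ to be induced (any extra edge would be a chord of one of the cycles of $\Pi$), and the same claim shows each of $P,Q,R,S$ has length at least two. But the heart of your plan --- the two attachment claims, with the single-node claim flagged as ``the main obstacle'' and driven by Lemma~\ref{c:neighHole} plus reconstruction of forbidden configurations --- is both unproved and, as stated, cannot be completed. The missing idea is an extremal choice: the paper takes the configuration $\Pi$ with $X$ \emph{minimal}. Consider a path $D=u\dots v$ in $G\setminus\Pi$ with $u$ adjacent to some $x\in Q\setminus\{a,c\}$ and $v$ adjacent to some $y$ in the interior of $X$. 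This produces no cycle with a unique chord, no cycle with any chord, and none of the configurations of Claims~\ref{c:Petersen3pc}--\ref{c:5PC}; what it produces is simply another basket ($aQxuDvy$, $aPcXy$, $R$, $S$, $yXd$) with a strictly shorter $X$-part. Inside your proof by contradiction $G$ already contains a basket, so finding another one is no contradiction unless you have fixed the minimal one in advance. Your toolbox has no answer for this case, and it is exactly the case your sketch waves at with ``I expect to either exhibit a cycle with a unique chord outright, or reconstruct an already-forbidden object.''

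The second, smaller point is that you underuse Claim~\ref{c:Nchords}. At this stage $G$ has no cycle with \emph{any} chord, and $ab$ is an edge of $\Pi$; so once the $X$-case is disposed of by minimality, any remaining path $D$ from $(P\cup Q)\setminus\{a,c\}$ to $(R\cup S)\setminus\{c\}$ closes into the cycle $uxQaPcXdSbRyvDu$, which has $ab$ as a chord --- an immediate contradiction. No single-node attachment claim, no application of Lemma~\ref{c:neighHole} to a catalogue of holes, and no reconstruction of the Petersen or Heawood graph is needed. The paper's entire argument is: choose $X$ minimal, observe $\{a,c\}$ is a proper $2$-cutset unless some outside path joins $(P\cup Q)\setminus\{a,c\}$ to $(X\cup R\cup S)\setminus\{c\}$, and kill the two cases ($y\in X$ by minimality, $y\in R\cup S$ by the chord $ab$). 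Your version would be far longer if it could be completed, and without the minimality of $X$ it cannot be.
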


  \begin{proof}
    Let $\Pi = P \cup Q \cup R \cup S \cup X$.  We suppose that $\Pi$
    is chosen subject to the minimality of $X$.  Note that the only
    edges of $G[\Pi]$ are the edges of the non-induced paths $P, Q, R,
    S, X$ and $ab$.  Indeed, every pair of nodes $\{x,y\} \subseteq
    \Pi$ can be embedded into a cycle containing only edges of the
    non-induced paths and $ab$, so if $xy$ is an edge of $G[\Pi]$ that
    contradicts our statement then there is a cycle $C$ of $G[\Pi]$
    such that $xy$ is a chord of $C$, and this contradicts
    Claim~\ref{c:Nchords}.  In particular, the five non-induced paths
    have no chords, so they are in fact paths.  Note that $P, Q, R, S$
    are all of length at least two for if $P$ (say) is of length one,
    then the unique edge of $P$ is a chord of a cycle of $G[\Pi]$, and
    this contradicts Claim~\ref{c:Nchords}.

    We now show that $\{a, c\}$ is a proper 2-cutset of $G$. Assume
    not.  Then there is a path $D=u \ldots v$ in $G \setminus \Pi$
    such that $u$ has a neighbor $x$ in $(P \cup Q) \setminus \{
    a,c\}$ (say in $Q\sm \{a, c\}$) and $v$ has a neighbor $y$ in $(X
    \cup R \cup S) \setminus \{ c\}$.

    Suppose that $y \in X\sm \{c\}$.  Then the five non-induced paths
    $a Q x u D v y$, $a P c X y$, $R$, $S$ and $y X d$ form a
    configuration that contradicts the minimality of~$X$.

    So $y \not \in X \setminus \{ c\}$, and hence w.l.o.g.\ $y \in R
    \setminus \{ d\}$. So $u x Q a P c X d S b R y v D u$ is a cycle
    with at least one chord (namely $ab$), contradicting
    Claim~\ref{c:Nchords}.
  \end{proof}

  \begin{CL}
    \label{c:noI}
    We may assume that $G$ does not contain the following induced
    subgraph (that we call $I$): six nodes $a, b, c, d, e, f$ with the
    following edges: $ab$, $ac$, $ad$, $be$, $bf$.
  \end{CL}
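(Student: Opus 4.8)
The plan is to prove the contrapositive in the usual style of this proof: if $G$ contains an induced copy of~$I$, then $G$ has a $1$-cutset, so we may assume $G$ is $I$-free. Throughout I would assume $G$ has no $1$-cutset, so that (being connected with more than two nodes) $G$ is $2$-connected. The starting observation is that, since $G$ is triangle-free and square-free (Claims~\ref{c:noTriangle} and~\ref{c:noSquare}), the sets $N(a)\sm\{b\}$ and $N(b)\sm\{a\}$ are disjoint, each stable, with no edge between them: a common neighbour of $a$ and $b$ would give a triangle, and an edge joining a neighbour of $a$ to a neighbour of $b$ would give a square through the edge $ab$. In particular this re-derives that $c,d,e,f$ are pairwise non-adjacent, as needed for $I$ to be induced, and it will be used to keep the holes below chordless.

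Next I would build a hole through each of $a$ and $b$. As $G$ is $2$-connected, $G\sm a$ is connected, so there is a $cd$-path $Y$ in $G\sm a$, and $O_a:=a\,c\,Y\,d\,a$ is a cycle, hence a chordless cycle (a hole) by Claim~\ref{c:Nchords}; its two neighbours of $a$ are then exactly $c$ and $d$. Moreover $b\notin O_a$, for otherwise $b$ would be a node of $O_a$ distinct from $c,d$, so the edge $ab$ would be a chord of $O_a$, contradicting Claim~\ref{c:Nchords}. Symmetrically there is a hole $O_b:=b\,e\,Z\,f\,b$ with $a\notin O_b$. Writing $\pi_a=O_a\sm a$ and $\pi_b=O_b\sm b$, both lie in $G\sm\{a,b\}$.

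The argument then splits according to whether $\{a,b\}$ disconnects $\pi_a$ from $\pi_b$. If it does, then $\{a,b\}$ is a $2$-cutset of $G$; since $ab$ is an edge and $G$ has no $1$-cutset, Lemma~\ref{abedge} yields $G\notin{\cal C}$, a contradiction. Otherwise there is a path $X'$ in $G\sm\{a,b\}$ from a node $c'\in\pi_a$ to a node $d'\in\pi_b$ whose interior avoids $\pi_a\cup\pi_b$ (possibly of length zero, when $\pi_a$ and $\pi_b$ meet). Then $a$ is joined to $c'$ by the two arcs of $O_a$, $b$ is joined to $d'$ by the two arcs of $O_b$, $X'$ joins $c'$ to $d'$, and $ab$ is an edge, which — after the disjointness adjustments discussed below — is precisely the basket configuration forbidden by Claim~\ref{c:basket}. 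Either way we reach a contradiction, so $G$ contains no induced $I$.

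The step I expect to be the main obstacle is making this last basket genuinely node-disjoint: the two holes $O_a$ and $O_b$ need not be disjoint, and the connecting path $X'$ must not re-enter them. I would handle this by choosing $X'$ to be a shortest $\pi_a$–$\pi_b$ path in $G\sm\{a,b\}$ and rerouting along $O_a$ and $O_b$ (or by appealing to the minimality already built into the statement of Claim~\ref{c:basket}) so as to present the five paths and the edge $ab$ in the exact form that claim requires. Once the two holes are fixed, the remaining bookkeeping — locating the extra neighbours $d$ and $f$ and bounding attachments via Lemma~\ref{c:neighHole} — is routine.
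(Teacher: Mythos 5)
Your opening moves match the paper's: assume $G$ has no 1-cutset, use Lemma~\ref{abedge} to conclude that $\{a,b\}$ is not a cutset, and aim for a contradiction with Claim~\ref{c:basket}. The holes $O_a$ and $O_b$ are also correctly constructed (Claim~\ref{c:Nchords} does make $acYda$ chordless, and does exclude $b$ from it). But the proof as written has a genuine gap exactly at the point you flag: Claim~\ref{c:basket} requires five paths that are node-disjoint except at their ends, and building $O_a$ and $O_b$ independently gives you no control over $O_a\cap O_b$, nor over how $X'$ meets the two holes. Lemma~\ref{c:neighHole} constrains the intersection somewhat (for instance at most one of $e,f$ can lie on $O_a$), but it does not make it empty, and when the two holes overlap there is no guarantee that you can extract two internally disjoint $a$--$c'$ arcs, two internally disjoint $b$--$d'$ arcs and a connector that are pairwise disjoint; you may have to abandon one of the holes entirely rather than merely ``reroute'' it. The parenthetical fallback --- appealing to the minimality built into Claim~\ref{c:basket} --- does not work either: the minimality there is over the choice of $X$ among configurations whose five paths are already disjoint; the statement gives you nothing when the paths overlap.

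This disjointness problem is not routine bookkeeping; it is the entire technical content of the paper's proof, which is organised precisely to sidestep your difficulty. Instead of building two cycles and then joining them, the paper builds the configuration sequentially: a minimal path $D$ from $\{c,d\}$ to $\{e,f\}$ in $G\setminus\{a,b\}$, then a minimal path $F$ from $c$ back to $D\cup\{e,b,f,d\}$ in $G\setminus a$, then a minimal path $H$ from $e$ back to $D\cup F\cup\{f,d,a,c\}$ in $G\setminus b$. Minimality at each step forces each new piece to be internally disjoint from everything already built, and a short case analysis on where $H$ attaches (using Claim~\ref{c:Nchords} to eliminate the bad attachment points) lands exactly on the basket configuration. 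To repair your argument you would have to do something equivalent --- for example, fix a cycle through $a$ first and then choose the $b$-cycle and the connector minimally with respect to it --- at which point you have essentially reconstructed the paper's proof.
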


  \begin{proof}
    We may assume that $G$ has no 1-cutset.  Hence, by
    Lemma~\ref{abedge}, $\{a, b\}$ is not a cutset of $G$.  So there
    exists a path in $G \sm \{a, b\}$ with an end having neighbors in
    $\{c, d\}$ and an end having neighbors in $\{e, f\}$.  We choose a
    minimal such path $D = u \dots v$. Up to symmetries and since $G$
    contains no square and no triangle by Claims~\ref{c:noSquare} and
    \ref{c:noTriangle}, we may assume that $u$ has a unique neighbor
    in $\{c, d\}$, and that $v$ has a unique neighbor in $\{e, f\}$.
    W.l.o.g.\ $du, vf \in E(G)$.  Note that from the minimality of
    $D$, $c, e$ have no neighbor in $D$.

    Since $a$ is not a 1-cutset there is a path $F$ in $G\sm a$ with
    one end $y$ adjacent to $c$  and an end $x$
    adjacent to some node $w$ in $D \cup \{e, b, f, d\}$. We choose
    such a path $F$ minimal with respect to this property. If $x$ is
    adjacent to $e$ or $b$ then $G$ contains a cycle with at least one
    chord (namely $ab$), contradicting Claim~\ref{c:Nchords}.  So, $w
    \in D' = duDvf$.  Note that from the minimality of $F$, $e$ has no
    neighbor in $F$.

    Since $b$ is not a 1-cutset, there is in $G\sm b$ a path $H$ with
    an end $z$ adjacent to $e$ and an end $t$ adjacent to some node
    $s$ in $D \cup F \cup \{f, d, a, c\}$.  Let $Q = acyFx$.  If $s\in
    Q$ then $ezHtsQadD'fbe$ is a cycle with at least one chord (namely
    $ab$), contradicting Claim~\ref{c:Nchords}. So $s\notin Q$.  Let
    $Q' = adD'w \sm w$ (note that if $w=d$ then $Q' = a$).  If $s\in
    Q'$ then $sQ'acyFxwD'fbezHts$ is a cycle with at least one chord
    (namely $ab$), contradicting Claim~\ref{c:Nchords}. So $s\notin
    Q'$.  Hence, $s\in fD'w$. 

    Now we observe that the five non-induced paths $bezHts$, $bfD's$,
    $acyFxw$, $adD'w$ and $wD's$ together with edge $ab$ have the same
    configuration as those in Claim~\ref{c:basket}, a contradiction.
  \end{proof}

  We can now prove that $G$ is strongly 2-bipartite.  Indeed, we may
  assume that $G$ has no 1-cutset and $G$ contains no square by
  Claim~\ref{c:noSquare}.  We may assume that $G$ is not a chordless
  cycle because $C_3$ is a clique, $C_4$ is a square, $C_5$, $C_6$ are
  induced subgraphs of the Petersen graph and $C_k$ where $k \geq 7$
  is an output of our theorem.  We may also assume that $G$ is not a
  clique.  Let us call a \emph{branch} of a graph $G$ any path of
  length at least one, whose ends are of degree at least 3, and whose
  interior nodes are of degree~2.  Since $G$ is not a chordless cycle,
  has no 1-cutset and is not a clique of size one or two, it is
  edge-wise partitioned into its branches.  In particular, every node
  of $G$ is of degree at least two.  No branch of $G$ is of length
  one, because such a branch is an edge of $G$ that has both ends of
  degree at least three, and then $G$ contains either a triangle, a
  square or an $I$, and this contradicts Claim~\ref{c:noTriangle},
  \ref{c:noSquare} or \ref{c:noI}.  We may also assume that $G$ has no
  branch of length at least~3 because the ends of such a branch, say
  $a$ and $b$, form a proper 2-cutset (note that $a$ and $b$ cannot be
  adjacent since there is no branch of length 1, and since there is no
  1-cutset, for every connected component $C$ of $G \setminus \{ a,b
  \}$ there is an $ab$-path in $G[ C \cup \{ a,b \} ]$).

  So we proved that every branch of $G$ is of length exactly~2.  This
  implies that the set $X$ of all nodes of $G$ of degree~2 and the set
  $Y$ of all nodes of $G$ with degree at least~3 are stable sets.  So
  $G$ is strongly 2-bipartite.  This proves Theorem~\ref{th:nosquare}.

\section{Structure theorem}
\label{sec:st}

The \emph{block} $G_X$ (resp. $G_Y$) of a graph $G$ with respect to a
1-cutset with split $(X, Y, v)$ is 
$G[X\cup \{ v \} ]$ (resp. $G[Y\cup \{ v\} ]$).

The \emph{block} $G_X$ (resp. $G_Y$) of a graph $G$ with respect to a
1-join with split $(X, Y, A, B)$ is the graph obtained by taking
$G[X]$ (resp. $G[Y]$) and adding a node $y$ complete to $A$ (resp. $x$
complete to $B$). Nodes $x, y$ are called the \emph{markers} of their
respective blocks.

The \emph{block} $G_X$ (resp. $G_Y$) of a graph $G$ with respect to a
proper 2-cutset with split $(X, Y, a, b)$ is the graph obtained 
by taking $G[X\cup \{ a,b\}]$ 
(resp. $G[Y\cup \{ a,b\}]$) and
  adding a node $c$ adjacent to $a, b$. 
Node $c$ is a called the \emph{marker} of the block $G_X$
(resp. $G_Y$).  

A graph is \emph{basic} if it is connected and it is either a clique,
a hole of length at least~7, a strongly 2-bipartite graph, or an
induced subgraph of the Petersen graph or the Heawood graph.  Note
that the square is not basic (but has a proper 1-join).  Every $C_k$,
$k\geq 3$, $k\neq 4$ is basic.

It is sometime useful to prove that every graph in a class has an
\emph{extremal} decomposition, that is a decomposition such that one
of the blocks is basic. With our basic classes, decompositions and
blocks, this is false for graphs in~$\cal C$.  The graph in
Fig.~\ref{f:noextr} is a counter-example. This graph has no proper
2-cutset and a unique  1-join.  No block with respect to this
proper 1-join is basic, but both blocks have a proper 2-cutset.

\begin{figure}
\center
\includegraphics{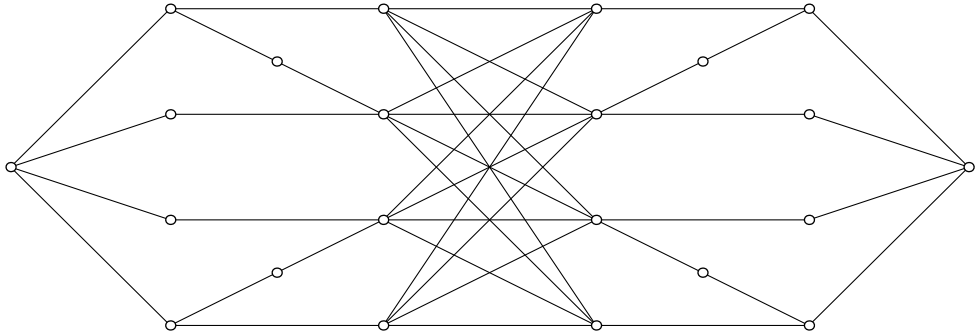}
\caption{A graph in $\cal C$ with no extremal decomposition\label{f:noextr}}
\end{figure}

\begin{lemma}\label{iff0}
Let $G_X$ and $G_Y$ be the blocks of decomposition of $G$
w.r.t.\ a 1-cutset, a proper 1-join or a proper 2-cutset.  Then
$G\in {\cal C}$ if and only if $G_X \in {\cal C}$ and $G_Y \in {\cal
  C}$.
\end{lemma}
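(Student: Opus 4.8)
The plan is to prove Lemma~\ref{iff0} separately for each of the three decomposition types, and within each type to establish the two implications of the ``if and only if'' in opposite directions. The cleaner direction is the converse: assuming $G \in \cal C$, I want to show each block $G_X, G_Y \in \cal C$. The harder direction is the forward one: assuming both blocks are in $\cal C$, I must show that no cycle with a unique chord can appear in the reassembled graph $G$. The key structural fact I will lean on throughout is that any cycle of $G$ is confined in a controlled way by the cutset, so that a putative bad cycle must either live entirely inside one side (where it contradicts a block being in $\cal C$) or cross the cutset through a very restricted interface.

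\medskip
\noindent\textbf{The block-contains-no-bad-cycle direction.}
For the claim that $G\in\cal C$ implies the blocks are in $\cal C$, I would argue contrapositively: take a cycle $K$ with a unique chord in, say, $G_X$, and produce one in $G$. If $K$ avoids the marker(s) entirely, then $K$ already lives in $G$ and we are done. Otherwise $K$ passes through the marker node(s), and here I use the defining property of the cutset to reroute. For a 1-cutset block $G_X = G[X \cup \{v\}]$ there is no marker and the argument is immediate. For a proper 2-cutset with split $(X,Y,a,b)$, the marker $c$ is adjacent exactly to $a$ and $b$; a cycle through $c$ uses the subpath $a\,c\,b$, which I replace by an $ab$-path through $Y$ (one exists by the definition of proper 2-cutset, and $a,b$ are nonadjacent so this does not create an unwanted chord). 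For the 1-join, the marker $y$ (complete to $A$) sits where $G$ had all edges from $A$ to $B$; a subpath $\alpha\, y\, \alpha'$ with $\alpha,\alpha'\in A$ is replaced by $\alpha\, \beta\, \alpha'$ for some $\beta\in B$, using that $B$ is a stable set of size at least two so $\beta$ is adjacent to every node of $A$ and to no node of $X$ other than via $A$. In each case I must check that the rerouting preserves the chord count; this is where the \emph{properness} hypotheses ($A,B$ stable, $a,b$ nonadjacent of degree $\ge 3$) do the work, since they prevent the substitute subpath from carrying extra chords or from merging with $K$ to create new ones.

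\medskip
\noindent\textbf{The reassembly direction.}
For the forward direction I assume $G_X, G_Y \in \cal C$ and suppose for contradiction that $G$ contains a cycle $K$ with a unique chord. If $K$ lies entirely in $G[X]$ (resp.\ $G[Y]$) together with the cutset nodes, I lift it into the corresponding block by collapsing the ``other side'' to the marker, obtaining a bad cycle there --- contradiction. The essential case is when $K$ genuinely crosses the cutset. For a 1-cutset this is impossible since $v$ is an articulation point, so no cycle crosses. For a proper 2-cutset, $K$ must enter and leave $Y$ through $\{a,b\}$, so $K\cap Y$ is a single $ab$-path $P_Y$ and $K\cap X$ is a single $ab$-path $P_X$; I replace $P_Y$ by the marker subpath $a\,c\,b$ to get a cycle $K'$ in $G_X$, and I need to verify $K'$ still has a unique chord. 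For a proper 1-join the crossing cycle uses exactly two $A$--$B$ edges, and the $B$-side portion can be contracted onto the marker. The crux --- and the step I expect to be the main obstacle --- is the chord-counting in these crossing cases: one must show that the unique chord of $K$ survives as the unique chord of the lifted cycle, and in particular that no chord of $K$ has one endpoint in $X$ and the other in $Y$ (such a chord would have to be a cutset/interface edge) and that contracting a side onto a marker neither destroys the existing chord nor manufactures a spurious one. Here I would use that chords across a proper 2-cutset can only join $a$ to $b$ (excluded, as they are nonadjacent) and that chords across a 1-join must be $A$--$B$ edges, which are forced to be edges of the interface rather than chords of $K$. Carefully enumerating how the single chord can be placed relative to the two sides, and confirming the marker substitution is chord-neutral in every placement, is the delicate bookkeeping that makes this direction work.
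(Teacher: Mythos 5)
Your overall strategy coincides with the paper's: treat the three decomposition types separately, observe that for the 1-cutset and the proper 1-join the blocks are (isomorphic to) induced subgraphs of $G$ --- for the 1-join, any node of $B$ can play the role of the marker $y$, which is exactly your rerouting of $\alpha\,y\,\alpha'$ to $\alpha\,\beta\,\alpha'$ --- and for cycles crossing a proper 2-cutset split the cycle into two $ab$-paths and note that the unique chord, which cannot be $ab$ nor join $X$ to $Y$, lies on one side, so the other side can be exchanged for the marker path $a\,c\,b$ (or, in the other direction, for an $ab$-path through $Y$) without changing the chord count. All of that matches the paper and is correct as you describe it.

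The one genuine gap is in the reassembly direction for the proper 1-join. You assert that a crossing cycle $K$ with a unique chord ``uses exactly two $A$--$B$ edges,'' that ``the $B$-side portion can be contracted onto the marker,'' and that one should show ``no chord of $K$ has one endpoint in $X$ and the other in $Y$.'' None of these is available a priori, and the contraction is not chord-neutral in general: if the $Y$-portion of $K$ is a path from $b_1$ to $b_2$ with $b_1\neq b_2$ in $B$ and the $X$-portion runs from $a_1$ to $a_2$ with $a_1\neq a_2$ in $A$, then $a_1b_2$ and $a_2b_1$ are edges of $G$ (as $A$ is complete to $B$) and hence chords of $K$, and contracting the $Y$-side would moreover destroy any chord lying inside it. The missing ingredient, which is exactly what the paper supplies, is the counting observation that any cycle with at least two nodes in $X$ and at least two nodes in $Y$ must contain at least two nodes of $A$ and at least two of $B$, and since $A$ and $B$ are stable and complete to each other, such a cycle is either a square or has at least two chords --- so it is never a cycle with a unique chord. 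Consequently a unique-chord cycle meeting both sides has exactly one node on one side, that node lies in $A$ or $B$ and is complete to the special set opposite it, and replacing it by the marker is an isomorphism of induced subgraphs rather than a lossy contraction; the contradiction with $G_X\in{\cal C}$ or $G_Y\in{\cal C}$ then follows. With this observation inserted, your argument closes and agrees with the paper's proof.
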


\begin{proof}
Suppose $G_X$ and $G_Y$ are the blocks of decomposition of $G$
w.r.t.\ a 1-cutset or a proper 1-join. Then $G_X$ and $G_Y$ are
induced subgraphs of~$G$, and hence if $G \in {\cal C}$ then $G_X \in
{\cal C}$ and $G_Y \in {\cal C}$.  Conversely, suppose that $G_X \in
{\cal C}$ and $G_Y \in {\cal C}$. If they are blocks w.r.t.\ a
1-cutset, then every cycle of $G$ belongs to $G_X$ or $G_Y$, and hence
$G \in {\cal C}$.  Assume they are blocks w.r.t.\ a proper
1-join. Then every cycle $C$ that has at least two nodes in $X$ and at
least two nodes in $Y$, has at least two nodes in $A$ and at least two
nodes in~$B$. Since $A$ and $B$ are stable sets, $C$ is either a
square or it has at least two chords.  It follows that every cycle of
$G$ with a unique chord is contained in $G_X$ or $G_Y$ (where possibly
the marker node plays the role of one of the nodes of the
cycle). Hence $G \in {\cal C}$.

Now suppose that $G_X$ and $G_Y$ are the blocks of decomposition of
$G$ w.r.t.\ a proper 2-cutset, with split $(X,Y,a,b)$.  Suppose $G \in
{\cal C}$.  Suppose w.l.o.g.\ that $G_X$ contains a cycle $C$ with a
unique chord. Then $C$ must contain $c$.  Let $P$ be an ab-path in
$G[Y \cup \{ a,b \}]$.  Then $G[(V(C)\setminus \{c\}) \cup V(P)]$ is a
cycle with a unique chord, a contradiction. So $G_X \in {\cal C}$ and
$G_Y \in {\cal C}$.

To prove the converse, assume that
  $G_X \in {\cal C}$ and $G_Y \in {\cal C}$, and $G$ contains a cycle
$C$ with a unique chord. Since $C$ cannot be contained in $G_X$ nor 
$G_Y$, it must contain a node of $X$ and a node of $Y$, and hence it
contains $a$ and $b$. Let $P_X$ (resp. $P_Y$) be the section of $C$ in
$G[X \cup \{ a,b \} ]$ (resp. $G[Y \cup \{ a,b \} ]$). 
Since $C$ contains a unique
chord, w.l.o.g.\ $P_Y$ is a path and $P_X$ has a unique chord. 
But then $G_X[V(P_X) \cup \{ c\} ]$ is a cycle with a
unique chord, a contradiction. 
\end{proof}

Theorem \ref{th:1} and Lemma \ref{iff0} actually give us a complete
structure theorem for class ${\cal C}$, i.e. every graph in ${\cal C}$
can be built starting from basic graphs, that can be explicitly constructed,
and gluing them together by prescribed composition operations, and all
graphs built this way are in ${\cal C}$. 

Cliques, holes and induced subgraphs of the Petersen graph or the
Heawood graph can clearly be explicitly constructed.  Also strongly
2-bipartite graphs can be constructed as follows. Let $X$ and $Y$ be
node sets.  We construct a bipartite graphs with bipartition $(X,Y)$
by making every node of $X$ adjacent to two nodes of $Y$.  Every
strongly 2-bipartite graph can be constructed this way, and every
graph constructed this way belongs to ${\cal C}$.  Indeed, a graph so
constructed does not have an edge both of whose endnodes are of degree
at least 3, whereas a chord of a cycle has endnodes that are both of
degree at least 3.

The composition operations we need are just the reverse of our
decompositions, and the union of two graphs. Each operation takes as
input two node disjoint graphs $G_1$ and $G_2$, and outputs a third
graph $G$.

\begin{description}
\item[\sc Operation ${\cal O}_0$] is the operation of taking the disjoint union
of two graphs, i.e. $V(G)=V(G_1) \cup V(G_2)$ and $E(G)=E(G_1) \cup E(G_2)$.

\item[\sc Operation ${\cal O}_1$] is the operation that is the reverse of
1-cutset decomposition. For some node $u$ of $G_1$ and some node $w$ of $G_2$,
$G$ is obtained from the disjoint union of $G_1 \setminus \{ u \}$
and $G_2 \setminus \{ w \}$, by adding a new node $v$ and all the edges 
between $v$ and $N_{G_1}(u) \cup N_{G_2}(w)$.

\item[\sc Operation ${\cal O}_2$] is the operation that is the reverse of
  proper 1-join decomposition.  For some node $u$ (resp. $v$) of $G_1$
  (resp. $G_2$) such that $N_{G_1}(u)$ (resp. $N_{G_2}(v)$) is a
  stable set of size at least 2, $G$ is obtained from the disjoint
  union of $G_1 \setminus \{ u \}$ and $G_2 \setminus \{ v \}$ by
  adding all edges between $N_{G_1}(u)$ and $N_{G_2}(v)$.

\item[\sc Operation ${\cal O}_3$] is the operation that is the reverse
  of proper 2-cutset decomposition.  For some degree 2 node $u$
  (resp. $v$) of $G_1$ (resp. $G_2$) which is not a 1-cutset, with
  neighbors $u_1$ and $u_2$ (resp. $v_1$ and $v_2$) such that $u_1$
  and $u_2$ (resp. $v_1$ and $v_2$) are nonadjacent, and
  $(d_{G_1}(u_1)-1)+(d_{G_2}(v_1)-1)\geq 3$ and
  $(d_{G_1}(u_2)-1)+(d_{G_2}(v_2)-1)\geq 3$, $G$ is obtained from the
  disjoint union of $G_1 \setminus \{ u,u_1,u_2 \}$ and $G_2 \setminus
  \{ v,v_1,v_2 \}$ by adding new nodes $w_1$ and $w_2$ and all edges
  between $w_1$ and $(N_{G_1}(u_1) \setminus \{ u \} ) \cup
  (N_{G_2}(v_1) \setminus \{ v \} )$ and between $w_2$ and
  $(N_{G_1}(u_2) \setminus \{ u \} ) \cup (N_{G_2}(v_2) \setminus \{ v
  \} )$.
\end{description}

\begin{theorem}
  \label{th:structure}
  If $G \in {\cal C}$ then either $G$ is basic or can be obtained
  starting from basic graphs by repeated applications of operations
  ${\cal O}_0, \ldots ,{\cal O}_3$. Conversely, every graph obtained
  in this way is in ${\cal C}$.
\end{theorem}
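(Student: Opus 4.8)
The plan is to prove both directions by induction, using Theorem~\ref{th:1} as the decomposition engine and Lemma~\ref{iff0} as the bridge between membership of $G$ in ${\cal C}$ and membership of its blocks. The key fact to establish first is that each composition operation ${\cal O}_1, {\cal O}_2, {\cal O}_3$ is \emph{exactly} the inverse of the corresponding decomposition: applying ${\cal O}_i$ to two graphs produces a graph whose decomposition has (graphs isomorphic to) those two as its blocks, and conversely the two blocks of a decomposition of $G$ recompose into $G$ under the matching ${\cal O}_i$.

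For the forward direction I would induct on $|V(G)|$. If $G$ is disconnected, split off one component; both pieces are induced subgraphs, hence lie in ${\cal C}$, are strictly smaller, and recompose via ${\cal O}_0$. If $G$ is connected and basic there is nothing to prove. Otherwise $G$ is connected and not basic, so Theorem~\ref{th:1} supplies a 1-cutset, a proper 2-cutset, or a proper 1-join; let $G_X, G_Y$ be the associated blocks. By Lemma~\ref{iff0} both blocks lie in ${\cal C}$, and a short count shows each is strictly smaller than $G$, using $X,Y \neq \emptyset$ for the 1-cutset and $|X|,|Y| \geq 2$ for the proper 1-join and proper 2-cutset (so that the single marker added to a block cannot compensate for the dropped side). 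The induction hypothesis makes each block buildable, and then $G$ is recovered by applying the appropriate ${\cal O}_i$ to $G_X$ and $G_Y$.

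For the backward direction I would induct on the number of operations in the construction. Basic graphs lie in ${\cal C}$, as already verified in Section~\ref{s:main}. For ${\cal O}_0$ the claim is immediate, since any cycle — hence any cycle with a unique chord — lives in a single component. For ${\cal O}_1, {\cal O}_2, {\cal O}_3$, the recomposed graph $G$ has its two inputs as the blocks of a 1-cutset, proper 1-join, or proper 2-cutset, so the ``if'' direction of Lemma~\ref{iff0} yields $G \in {\cal C}$ from the inductive hypothesis that both inputs are in ${\cal C}$.

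The main obstacle is the bookkeeping that makes operations and decompositions genuine inverses, and this is most delicate for ${\cal O}_3$. One must check that when $G$ has a proper 2-cutset $(X,Y,a,b)$ the marker $c$ of $G_X$ is a degree-$2$ node that is \emph{not} a $1$-cutset of $G_X$ (using that $G[X \cup \{a,b\}]$ is connected and contains an $ab$-path), that its neighbors $a,b$ are nonadjacent, and that the degree condition $(d_{G_X}(a)-1)+(d_{G_Y}(a)-1) = d_G(a) \geq 3$ holds, so that ${\cal O}_3$ is applicable; the analogous verifications for ${\cal O}_1$ and ${\cal O}_2$ are routine. In the reverse check one notes that ${\cal O}_3$ can leave $|X| = 1$ (for instance when an input is a $C_4$), so the requirement $|X| \geq 2$ in the definition of a proper 2-cutset may fail. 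This is harmless: the proof of the ``if'' direction of Lemma~\ref{iff0} never uses $|X|,|Y| \geq 2$, so the conclusion $G \in {\cal C}$ survives. Once these correspondences are nailed down, both inductions close routinely.
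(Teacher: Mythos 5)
Your proposal is correct and follows exactly the route the paper takes: its entire proof of Theorem~\ref{th:structure} is the one-line ``Follows from Theorem~\ref{th:1} and Lemma~\ref{iff0}'', and your two inductions plus the verification that the operations ${\cal O}_1,\ldots,{\cal O}_3$ invert the decompositions are precisely the details the authors leave implicit. Your observations about ${\cal O}_3$ (the marker being a degree-$2$ non-cutvertex, the degree count $d_G(a)=(d_{G_X}(a)-1)+(d_{G_Y}(a)-1)$, and the harmlessness of $|X|=1$ in the reverse direction) are accurate and fill genuine gaps in the paper's terse presentation.
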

\begin{proof}
Follows from Theorem \ref{th:1} and Lemma \ref{iff0}.
\end{proof}

\section{Constructing a decomposition tree}
\label{sec:decomp}

We will now construct a decomposition tree for an input graph $G$, and
then use this tree to obtain an ${\cal O} (nm)$ recognition algorithm
for class ${\cal C}$ (in descriptions of algorithms, $n$ stands for
the number of nodes and $m$ for the number of edges).  An ${\cal
  O}(n^5)$ or a slightly more involved ${\cal O}(n^4)$ algorithm could
be obtained from first principles, but we use sophisticated algorithms
from other authors, namely Dahlhaus~\cite{dahlhaus:split}, Hopcroft
and Tarjan~\cite{hopcroft.tarjan:447,tarjan:dfs,hopcroft.tarjan:3con}
to get our algorithm to run in ${\cal O}(nm)$-time.  Note that we do not
use the full strength of the works of these authors since they are
able to decompose fully a graph in linear time using 1-joins, or using
1-cutsets, or using 2-cutsets.  But their notions of decompositions
differ slightly from what we need so we just use their algorithms to
find the cutsets in linear time.  We leave as an open question whether
it is possible to recognize graphs in ${\cal C}$ in ${\cal
  O}(n+m)$-time.

We could use the definition of blocks of decomposition from
Section~\ref{sec:st} to construct a decomposition tree, and use it to
obtain the recognition algorithm, but such a tree cannot be used for
our coloring algorithm, because for coloring we need the blocks of a
square-free graph with respect to a proper 2-cutset to be also
square-free.  So in this section, blocks of decomposition w.r.t.\ a
1-cutset and a proper 1-join stay the same as in Section~\ref{sec:st}
but the blocks of decomposition w.r.t.\ a proper 2-cutset are
redefined here below.

The \emph{block} $G_X$ (resp. $G_Y$) of a graph $G$ with respect to a
proper 2-cutset with split $(X, Y, a, b)$ is the graph obtained as
follows:  
\begin{itemize}
\item
  if there exists a node $c$ of $G$ such that $N(c) = \{a, b\}$, then
  take such a node $c$, and let $G_X = G[X \cup \{a,b,c\}]$ and $G_Y =
  G[Y \cup \{a,b,c\}]$;
\item
  else $G_X$ (resp. $G_Y$) is the block defined in Section~\ref{sec:st}
  that is the graph obtained by taking $G[X\cup \{ a,b\}]$
  (resp. $G[Y\cup \{ a,b\}]$) and adding a node $c$ adjacent to $a,
  b$.
\end{itemize}

Node $c$ is called the \emph{marker} of the block $G_X$ (resp. $G_Y$).

\begin{lemma}
  \label{l:Htriangle}
  Let $G \in {\cal C}$ and suppose that $G_X$ and $G_Y$ are the blocks
  of decomposition of $G$ w.r.t.\ a 1-cutset, a proper 1-join or a
  proper 2-cutset. If $G$ is connected and triangle-free then $G_X$
  and $G_Y$ are connected and triangle-free.
\end{lemma}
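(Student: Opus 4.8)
The plan is to treat the three decomposition types separately and, within each, to check triangle-freeness and connectedness of $G_X$, the argument for $G_Y$ being symmetric. The unifying idea for connectedness is that in every case the only edges of $G$ that leave $X$ run to the ``boundary'' of the decomposition (the cutset node $v$, the special set $A$, or the pair $\{a,b\}$). So for any $x\in X$, a path in the connected graph $G$ from $x$ to the other side must first reach this boundary while staying inside $X$, and the block attaches all boundary nodes to one another (via existing structure or via the marker). Triangle-freeness will be immediate whenever the block is an induced subgraph of $G$, and when a marker is added I would rely on the stability/non-adjacency conditions built into the definitions of the decompositions.

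First, for a 1-cutset with split $(X,Y,v)$ the blocks $G_X=G[X\cup\{v\}]$ and $G_Y=G[Y\cup\{v\}]$ are induced subgraphs of $G$, so triangle-freeness is inherited at once. For connectedness I would take, for $x\in X$, a path $P$ from $x$ to $v$ in $G$; its first vertex outside $X$ cannot lie in $Y$ (no edge between $X$ and $Y$), hence equals $v$, so the prefix of $P$ up to $v$ lies in $G_X$. Next, for a proper 1-join with split $(X,Y,A,B)$ the block $G_X$ is $G[X]$ together with a marker $y$ complete to $A$. Since the 1-join is \emph{proper}, $A$ is stable, so $y$ lies in no triangle and $G_X$ is triangle-free. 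For connectedness I would, for $x\in X$, take a path in $G$ to a node of the nonempty set $Y$; it uses an edge between $A$ and $B$, so its prefix reaches some $a\in A$ inside $X$, and since $y$ is complete to $A$ this joins $x$ to $y$.

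Finally, for a proper 2-cutset with split $(X,Y,a,b)$ I would use the block definition of Section~\ref{sec:decomp}. In both subcases the marker $c$ has $N_{G_X}(c)=\{a,b\}$, and $a,b$ are non-adjacent by the definition of a proper 2-cutset, so $c$ lies in no triangle; in the first subcase $G_X=G[X\cup\{a,b,c\}]$ is an induced subgraph of $G$, and in the second subcase $G[X\cup\{a,b\}]$ is, so triangle-freeness follows. For connectedness, any path in $G$ from $x\in X$ to a node of $Y$ must pass through $a$ or $b$, so $x$ reaches $\{a,b\}$ inside $X\cup\{a,b\}$; as $c$ is adjacent to both $a$ and $b$, these two nodes and $c$ all lie in one component, giving connectedness of $G_X$.

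The only genuinely delicate point is the proper 2-cutset, where the marker $c$ is deliberately made adjacent to both nodes of the pair $\{a,b\}$: triangle-freeness would fail here if $a$ and $b$ were adjacent, so it is essential to invoke the non-adjacency that is part of the definition of a proper 2-cutset. Everything else is routine bookkeeping with the ``reach the boundary, then connect through the marker'' argument, and notably does not require the full hypothesis $G\in{\cal C}$ beyond the definitional properties of the three splits.
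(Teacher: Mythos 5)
Your proof is correct and follows essentially the same route as the paper's: triangle-freeness is inherited for the induced parts, the marker's neighborhood is a non-adjacent pair (or stable set) so it lies in no triangle, and connectedness comes from the fact that every node of $X$ reaches the boundary of the split within the block. The paper is merely terser (it observes that the 1-join block is itself an induced subgraph of $G$, with a node of $B$ playing the role of the marker, and dismisses connectedness as clear), so your write-up just supplies details the authors omitted.
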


\begin{proof}
  The blocks of $G$ are clearly connected.  The blocks of $G$ with
  respect to a 1-cutset or a proper 1-join are induced subgraphs of
  $G$ so they are triangle-free.  If one of the block of $G$ w.r.t.\ a
  proper 2-cutset $\{a, b\}$ contains a triangle then this triangle
  must contain the marker $c$.  So the triangle must be $abc$ and this
  contradicts $ab \notin E(G)$.
\end{proof}

\begin{lemma}\label{l0}
  Let $G \in {\cal C}$ and suppose that $G_X$ and $G_Y$ are the blocks
  of decomposition of $G$ w.r.t.\ a proper 2-cutset with split $(X, Y,
  a, b)$. If $G$ is connected, triangle-free, square-free,
  Petersen-free, has no 1-cutset and no proper 1-join, then $G_X$ and
  $G_Y$ have the same property.
\end{lemma}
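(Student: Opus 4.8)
The plan is to verify the six properties (connected, triangle-free, square-free, Petersen-free, no $1$-cutset, no proper $1$-join) one at a time for $G_X$; the argument for $G_Y$ is symmetric. Connectedness and triangle-freeness are already granted by Lemma~\ref{l:Htriangle}. Throughout I use the description of $G_X$ as $G[X\cup\{a,b\}]$ together with a marker $c$ whose only neighbours are $a$ and $b$, so $\deg_{G_X}(c)=2$; I also record that when the marker is a genuine node of $G$ (the first case of the definition) both $G_X$ and $G_Y$ are literally induced subgraphs of $G$, so every hereditary property is inherited for free and only the two cutset-type properties remain. Since $a\not\sim b$, a useful preliminary remark, coming from square-freeness of $G$, is that $a$ and $b$ have at most one common neighbour in $G$: two common neighbours $w_1,w_2$ would give the square $aw_1bw_2$.

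\emph{Petersen-freeness} is immediate. In an induced Petersen subgraph every node has exactly three neighbours inside it, whereas $c$ has degree $2$ in $G_X$; hence no induced Petersen graph can contain $c$, so such a subgraph would lie in $G[X\cup\{a,b\}]\subseteq G$, contradicting that $G$ is Petersen-free.

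For the two cutset properties I lift the putative decomposition of $G_X$ back to $G$, the key point being that the marker keeps $a$ and $b$ connected: the path $acb$ lies in $G_X$, so no vertex of $G_X$ separates $a$ from $b$. Thus a $1$-cutset $v$ of $G_X$ satisfies $v\neq c$ (a degree-$2$ vertex whose neighbours are otherwise joined by an $ab$-path is never a cut vertex), and some component $K$ of $G_X\setminus v$ avoids $\{a,b,c\}$; then $K\subseteq X$, and since $X$ sends no edge to $Y$, the only neighbour of $K$ in $G$ is $v$, so $v$ is a $1$-cutset of $G$, a contradiction. Similarly, suppose $G_X$ has a proper $1$-join with split $(X_1,Y_1,A_1,B_1)$ and say $c\in Y_1$. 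If $c\notin B_1$ then $c$ has no neighbour in $X_1$, which forces $a,b\in Y_1$ and $X_1\subseteq X$; replacing $Y_1$ by $(Y_1\setminus\{c\})\cup Y$ changes no edge across the partition (the $Y$-part meets $X_1$ in no edge) and gives a proper $1$-join of $G$ with the same special sets, a contradiction. If instead $c\in B_1$ then every node of $A_1$ is adjacent to $c$, forcing $A_1=\{a,b\}$; then each node of $B_1$ is a common neighbour of $a,b$, so $B_1$ is $c$ together with a \emph{real} common neighbour $w$, and $acbw$ is a square of $G_X$ — excluded once square-freeness is known.

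\emph{Square-freeness} is the crux, and I would establish it first, so that the proper $1$-join step above is legitimate. A square avoiding $c$ lies in $G$, so it suffices to forbid a square through $c$, that is, a node $w\in X$ with $w\sim a,b$ and $w\neq c$. By the preliminary remark such a $w$ is the unique common neighbour of $a,b$; moreover in the first case of the definition $G_X$ is an induced subgraph and there is nothing to prove, so one may assume no node of $G$ has neighbourhood exactly $\{a,b\}$, whence $\deg_G(w)\geq 3$. The hard part is to rule out this high-degree common neighbour. The plan is to pick a shortest $ab$-path $P$ inside $G[Y\cup\{a,b\}]$, which has length at least three since $a,b$ have no common neighbour in $Y$, so that $\{w\}\cup P$ is a hole $H$; then choose a neighbour $w'\neq a,b$ of $w$ and, using that $G$ has no $1$-cutset (so $G\setminus w$ is connected), route an induced path from $w'$ back to $H$ avoiding $w$. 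The resulting ear, together with the edges $wa$ and $wb$, yields either a square or a cycle with a unique chord in $G$, contradicting that $G$ is square-free and lies in ${\cal C}$. This ear analysis, controlled by Lemma~\ref{c:neighHole} and triangle-freeness to bound the attachments of the ear on $H$, is where the genuine work lies; the remaining five items are essentially bookkeeping on top of it.
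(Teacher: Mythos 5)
Your architecture matches the paper's: connectivity and triangle-freeness from Lemma~\ref{l:Htriangle}, Petersen-freeness from the marker having degree two, and lifting a putative 1-cutset of a block back to $G$. For square-freeness, which you rightly identify as the crux, your plan is in substance the paper's own argument: the offending square forces a real common neighbour $w$ of $a,b$ with $d_G(w)\geq 3$; one takes a third neighbour $w'$, escapes from $w'$ to $\{a,b\}$ avoiding $w$ (possible since $w$ is not a 1-cutset, and the escape cannot enter $Y$ before hitting $\{a,b\}$), closes up through an $ab$-path with interior in $Y$, and obtains a cycle whose unique chord is $wa$. One caution on the part you defer: Lemma~\ref{c:neighHole} bounds the attachment of a single node on a hole, which is not by itself enough to control a multi-node ear. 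The paper instead chooses $w'$ and the escape path \emph{jointly} so as to minimize the escape path's length; this kills chords from $w$ to its interior (such a neighbour would be a better choice of $w'$) and chords from $b$ to its interior (a shorter escape), while $b$ adjacent to the node of the path next to $a$ is excluded because $w$ is the unique common neighbour of $a,b$, and $bw'\in E(G)$ is excluded by triangle-freeness. Make that joint minimality explicit and your sketch closes. The one place you genuinely diverge is the proper 1-join: your two-case analysis on the position of $c$ is correct (and your ordering, squares first, is the right one), but it is unnecessary work — the paper simply observes that a graph with a proper 1-join always contains a square (two nodes of $A$ and two of $B$), so square-freeness of the blocks disposes of this case in one line.
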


\begin{proof}
  For connectivity and triangles, the lemma follows from
  Lemma~\ref{l:Htriangle}.

  For squares, suppose that w.l.o.g.\ $G_X$ contains a square $C$.
  Since $G$ is square-free, $C$ contains the marker node $c$ (that is
  not a real node of $G$), and hence $C=cazbc$, for some node $z \in
  X$. Since $c$ is not a real node of $G$, $d_G(z) >2$ for otherwise,
  $z$ would have been chosen to serve as a marker.  Let $z'$ be a
  neighbor of $z$ that is distinct from $a$ and $b$.  Note that since
  $G$ is triangle-free, $az'$ and $bz'$ are not edges.  Since $z$ is
  not a 1-cutset, there exists a path $P$ in $G[X \cup \{a, b\}]$ from
  $z'$ to $\{a, b\}$. We choose $z'$ and $P$ subject to the minimality
  of $P$. So, w.l.o.g.\ $z'Pa$ is a path. Note that $b$ is not
  adjacent to the neighbor of $a$ along $P$ since $z$ is the unique
  common neighbor of $a, b$ because $G$ is square-free.  So by
  minimality of $P$, $b$ does not have a neighbor in $P$.  Now let $Q$
  be a path from $a$ to $b$ whose interior is in $Y$.  So, $bzz'PaQb$
  is a cycle with a unique chord (namely $az$), a contradiction.

  For the Petersen graph, it suffices to notice that if a block of $G$
  contains it, then the marker $c$ must be in it, and this is a
  contradiction since $c$ is of degree two.

  For 1-cutsets, suppose w.l.o.g.\ that $G_X$ has a 1-cutset with split
  $(A,B,v)$.  Since $G$ is connected and $G[X \cup \{ a,b \}]$
  contains an $ab$-path, $v \neq c$ (where $c$ is the marker node of
  $G_X$). Suppose $v=a$. Then w.l.o.g.\ $b \in B$, and hence $(A,B \cup
  Y,a)$ is a split of a 1-cutset of~$G$ (with possibly $c$ removed
  from $B \cup Y$, if $c$ is not a real node of $G$), a
  contradiction. So $v \neq a$ and by symmetry $v \neq b$. So $v \in X
  \setminus \{ c \}$. W.l.o.g.\ $\{ a,b,c \} \subseteq B$. Then $(A,B
  \cup Y,v)$ is a split of a 1-cutset of $G$ (with possibly $c$
  removed from $B \cup Y$, if $c$ is not a real node of $G$), a
  contradiction. 

  For proper 1-joins, it suffices to notice that the blocks of $G$ are
  square-free, so they cannot have a proper 1-join. 
\end{proof}

\begin{lemma}\label{iff}
  Let $G_X$ and $G_Y$ be the blocks of decomposition of $G$ w.r.t.\ a
  1-cutset, a proper 1-join or a proper 2-cutset.  Then $G\in {\cal
    C}$ if and only if $G_X \in {\cal C}$ and $G_Y \in {\cal C}$.
\end{lemma}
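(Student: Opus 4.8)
The plan is to reduce this to the already-proved Lemma~\ref{iff0}, since the present statement differs from it only in the definition of the blocks with respect to a proper 2-cutset. For a 1-cutset or a proper 1-join the blocks are, by construction, exactly the blocks of Section~\ref{sec:st}, so both directions of the equivalence are given verbatim by the corresponding parts of the proof of Lemma~\ref{iff0} (the blocks are induced subgraphs of $G$ in the 1-cutset and proper 1-join cases, and the cycle-counting argument for the converse is unaffected by the new 2-cutset convention). Likewise, for a proper 2-cutset with split $(X,Y,a,b)$ in the case where there is \emph{no} node $c$ with $N(c)=\{a,b\}$, the new definition coincides with the old one (a fresh marker adjacent to $a,b$ is added), so the proper 2-cutset part of the proof of Lemma~\ref{iff0} applies directly. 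Hence the only genuinely new content is the proper 2-cutset case in which a real node $c$ with $N(c)=\{a,b\}$ exists.

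First I would record the structure of that case. Such a $c$ is distinct from $a,b$ (as $a,b$ are nonadjacent) and lies in exactly one of $X,Y$; by the symmetry between the two blocks I may assume $c\in X$. Then $G_X=G[X\cup\{a,b,c\}]=G[X\cup\{a,b\}]$ and $G_Y=G[Y\cup\{a,b,c\}]$ are \emph{both induced subgraphs of $G$}, and crucially $c\in V(G_X)\cap V(G_Y)$ with $N_{G_X}(c)=N_{G_Y}(c)=\{a,b\}$, so the real node $c$ plays the role of the proper 2-cutset marker in both blocks simultaneously. The forward direction is then immediate: an induced subgraph of a graph in $\cal C$ is again in $\cal C$, so $G\in{\cal C}$ gives $G_X,G_Y\in{\cal C}$.

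For the converse I would assume $G_X,G_Y\in{\cal C}$ and suppose for contradiction that $G$ contains a cycle $C$ with a unique chord. If $V(C)\subseteq X\cup\{a,b\}$ then $C\subseteq G_X$, and if $V(C)\subseteq Y\cup\{a,b,c\}$ then $C\subseteq G_Y$; since both blocks are induced subgraphs, $C$ together with its chord survives in that block, contradicting $G_X,G_Y\in{\cal C}$. In particular, if $c\in V(C)$ then, as $N(c)=\{a,b\}$, the two neighbours of $c$ on $C$ are $a$ and $b$, so $C$ consists of the path $a\,c\,b$ together with one further $ab$-path whose interior lies entirely in $X\setminus\{c\}$ or entirely in $Y$; either way $C$ is confined to a single block, a contradiction. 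So I may assume $c\notin V(C)$ and that $C$ meets both $X\setminus\{c\}$ and $Y$. Because the only edges between the two sides pass through $a,b$, the cycle $C$ runs through $a$ and $b$ and splits into an $ab$-arc $P_X$ in $G[X\cup\{a,b\}]$ and an $ab$-arc $P_Y$ in $G[Y\cup\{a,b\}]$. Exactly as in Lemma~\ref{iff0}, the unique chord of $C$ has both ends on one arc, say $P_X$, while $P_Y$ is chordless; then $G_X[V(P_X)\cup\{c\}]$ is the cycle obtained by closing $P_X$ with the path $a\,c\,b$, and it has precisely the one chord of $P_X$ (the node $c$ contributes no chord, as its only neighbours $a,b$ are the ends of $P_X$), contradicting $G_X\in{\cal C}$. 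If instead the chord lies on $P_Y$, the symmetric argument using $G_Y[V(P_Y)\cup\{c\}]$ applies, since $c$ is available in $G_Y$ as well.

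The step I expect to be the main obstacle is the converse in this real-$c$ case: one must verify that the single physical node $c$ can serve as the closing marker in \emph{whichever} block carries the chord, and one must dispose of the degenerate possibility that $c$ itself lies on the offending cycle. Both points hinge on the observation that under the new convention $c$ belongs to both blocks with neighbourhood exactly $\{a,b\}$; once this is isolated, the remaining bookkeeping mirrors the proper 2-cutset argument of Lemma~\ref{iff0}.
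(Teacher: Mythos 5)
Your proposal is correct and follows essentially the same route as the paper: reduce everything to Lemma~\ref{iff0} except the proper 2-cutset case with a real node $c$ satisfying $N(c)=\{a,b\}$, get the forward direction from the blocks being induced subgraphs, and for the converse split the offending cycle into two $ab$-arcs and close the chord-bearing arc with $c$ inside its block (the paper disposes of the possibility $c\in V(C)$ by the same degree-two observation you make explicit). No gaps.
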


\begin{proof}
  By Lemma~\ref{iff0} we may assume that $G_X$ and $G_Y$ are the
  blocks of decomposition of $G$ w.r.t.\ a proper 2-cutset, with split
  $(X,Y,a,b)$ and that the marker $c$ is a real node of $G$.  So $G_X$
  and $G_Y$ are induced subgraphs of~$G$.  Hence, $G \in {\cal C}$
  implies $G_X \in {\cal C}$ and $G_Y \in {\cal C}$.

  To prove the converse, assume that $G_X \in {\cal C}$ and $G_Y \in
  {\cal C}$, and $G$ contains a cycle $C$ with a unique chord. Since
  $C$ cannot be contained in $G_X$ nor $G_Y$, it must contain a node
  of $X$ and a node of $Y$, and hence it contains $a$ and $b$. Let
  $P_X$ (resp. $P_Y$) be the section of $C$ in $G[X \cup \{ a,b \}]$
  (resp. $G[Y \cup \{ a,b \}]$).  Since $C$ contains a unique chord,
  w.l.o.g.\ $P_Y$ is a path and $P_X$ has a unique chord. Note that $c
  \not \in V(P_X)$, since $c$ is of degree 2 in $G$ and it is adjacent
  to both $a$ and $b$. Hence $G_X[V(P_X) \cup \{ c\} ]$ is a cycle
  with a unique chord, a contradiction.
\end{proof}

An algorithm of Hopcroft and
Tarjan~\cite{hopcroft.tarjan:447,tarjan:dfs} finds in linear time a
1-cutset of $G$ (if any).  An algorithm of
Dahlhaus~\cite{dahlhaus:split} finds in linear time a 1-join of $G$ if
any.  The next lemma shows how to use this algorithm to find a proper
1-join or determine that $G \not \in {\cal C}$.

\begin{lemma}\label{p1j}
  Let $G$ be a graph that is not a clique and has no 1-cutset.  Assume
  $G$ has a 1-join. If this 1-join is not proper, then $G \not \in
  {\cal C}$.
\end{lemma}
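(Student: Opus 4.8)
The plan is to reduce the statement to the existence of a triangle and then invoke Theorem~\ref{th:triangle}. Fix a split $(X, Y, A, B)$ of the given non-proper 1-join, so that there are all edges between $A$ and $B$ and no other edges between $X$ and $Y$; note the setup is symmetric under exchanging the pairs $(X,A)$ and $(Y,B)$. I would read the hypotheses as applying to connected $G$ (the notion of 1-cutset presupposes connectivity, and this is the only case relevant to the recognition algorithm). The first step is to establish $|A| \geq 2$ and $|B| \geq 2$. Indeed, if $A = \{a\}$ then every edge between $X$ and $Y$ is incident to $a$, so $(X \setminus \{a\}, Y, a)$ is a split of a 1-cutset of $G$: both sides are non-empty because $|X| \geq 2$ and $\emptyset \neq B \subseteq Y$, and there is no edge between $X \setminus \{a\}$ and $Y$. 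This contradicts the hypothesis, and the case $|B| = 1$ is symmetric, so $|A|, |B| \geq 2$.

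Now, since the 1-join is not proper while $|A|, |B| \geq 2$, the definition of a proper 1-join forces at least one of $A$, $B$ to fail to be a stable set. Up to the symmetry above I may assume $A$ contains an edge $a_1 a_2$. Choosing any $b \in B$, the complete adjacency between $A$ and $B$ gives the edges $a_1 b$ and $a_2 b$, so $\{a_1, a_2, b\}$ induces a triangle; thus $G$ contains a triangle. (As a side remark giving intuition: in the generic situation where moreover $B$ is not a clique, picking non-adjacent $b_1, b_2 \in B$ makes $\{a_1, a_2, b_1, b_2\}$ an explicit cycle with a unique chord, namely the $4$-cycle $a_1 b_1 a_2 b_2$ with chord $a_1 a_2$; it is only the residual case where both $A$ and $B$ are cliques that really needs the theorem below.)

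To finish, suppose for contradiction that $G \in {\cal C}$. Since $G$ is connected, contains a triangle, and is not a clique, Theorem~\ref{th:triangle} asserts that one node of the maximal clique containing this triangle is a 1-cutset of $G$, contradicting the hypothesis that $G$ has no 1-cutset. Hence $G \not\in {\cal C}$. I expect the only delicate point to be the first step: ruling out $|A| = 1$ and $|B| = 1$ is precisely what converts non-properness into a genuine edge inside $A$ or $B$, and hence into a triangle — without it, non-properness could be caused solely by a singleton special set and yield no triangle at all. Everything after that is immediate once Theorem~\ref{th:triangle} is available, which is exactly where the substantive work has already been carried out.
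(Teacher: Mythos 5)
Your proof is correct and follows essentially the same route as the paper: rule out $|A|=1$ and $|B|=1$ via the no-1-cutset hypothesis, extract an edge inside a special set, form a triangle with a node of the other special set, and conclude by Theorem~\ref{th:triangle}. The extra detail you supply (and the side remark about the explicit cycle with a unique chord when $B$ is not a clique) is consistent with, but not needed beyond, the paper's argument.
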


\begin{proof}
  Let $(X,Y,A,B)$ be the split of a 1-join of $G$ that is not proper.
  If $|A|=1$ then $A$ is a 1-cutset of $G$, a contradiction.  So
  $|A|\geq 2$, and by symmetry $|B| \geq 2$. Since the 1-join is not
  proper, w.l.o.g.\ there is an edge with both ends in $A$. This edge
  together with any node of $B$ forms a triangle, and so by Theorem
  \ref{th:triangle} (and since $G$ is not a clique and has no
  1-cutset) $G \not \in {\cal C}$.
\end{proof}

Recall that in a graph $G$ two nodes $a$ and $b$ form a {\em 2-cutset}
if $G\setminus \{ a,b \}$ is disconnected.  Hopcroft and
Tarjan~\cite{hopcroft.tarjan:3con} give an algorithm that finds a
2-cutset in a graph (if any) in linear time.  This 2-cutset is not
necessarily a proper 2-cutset (which is what we need).  We now show
how to find a proper 2-cutset in linear time.

Recall that if $H$ is an induced subgraph of $G$ and $D$ is a set
of nodes of $G \setminus H$, the {\em attachment of $D$ over $H$}
is the set of all nodes of $H$ that have a neighbor in $D$.

\begin{lemma}\label{p2calg}
  There is an algorithm with the following specifications.
  \begin{description}
  \item[\rule{1em}{0ex}\sc Input:] A connected graph $G$ that has no 1-cutset nor a
    proper 1-join, and is not basic.
  \item[\rule{1em}{0ex}\sc Output:] 
    $G$ is correctly identified as not belonging
    to ${\cal C}$, or a proper 2-cutset of $G$.
  \item[\rule{1em}{0ex}\sc Running time:]  
    ${\cal O}(n+m)$.
  \end{description}
\end{lemma}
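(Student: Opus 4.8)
The plan is to use Theorem~\ref{th:1} as the correctness backbone: a connected graph satisfying the hypotheses of the lemma (not basic, no 1-cutset, no proper 1-join) and lying in ${\cal C}$ must have a proper 2-cutset. Hence it suffices to design a linear-time procedure that either exhibits a genuine proper 2-cutset of $G$, or reaches a situation that provably forces a cycle with a unique chord; in the latter case the contrapositive of Theorem~\ref{th:1} certifies $G \not\in {\cal C}$. Throughout I may freely assume $G$ is triangle-free and square-free, since a triangle (by Theorem~\ref{th:triangle}) or a square (by Theorem~\ref{th:square}), together with the hypotheses, already forces $G \not\in {\cal C}$.

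The cleanest source of proper 2-cutsets, which I would look for first, is a \emph{branch of length at least three}, i.e.\ a path $B = a \dots b$ of length $\ge 3$ whose interior nodes have degree two and whose ends have degree at least three. Its interior is a full component of $G \setminus \{a,b\}$, so $\{a,b\}$ is a 2-cutset; if $ab \in E(G)$ then Lemma~\ref{abedge} gives $G \not\in {\cal C}$, and otherwise $(\,\mathrm{int}(B),\, V(G) \setminus V(B)\,)$ is the split of a proper 2-cutset. Indeed the ends have degree $\ge 3$; the branch side has at least two nodes; the other side has at least two nodes (a single extra node could not supply the two non-branch neighbours that each end of degree $\ge 3$ requires); and the other side contains an $ab$-path because deleting the interior of a branch cannot disconnect a graph with no 1-cutset. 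Such a branch, if one exists, is found by tracing the maximal degree-two paths, at cost ${\cal O}(n+m)$.

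If no branch of length $\ge 3$ exists, I distinguish two regimes. If $G$ has \emph{minimum degree at least three}, I call Hopcroft--Tarjan to produce a 2-cutset $\{a,b\}$; if there is none, then $G$ has no proper 2-cutset and we output $G \not\in {\cal C}$. Otherwise, if $ab \in E(G)$ we output $G \not\in {\cal C}$ by Lemma~\ref{abedge}; and if $ab \notin E(G)$ then $\{a,b\}$ is already proper: both ends have degree $\ge 3$ by hypothesis; since $G$ has no 1-cutset both $a$ and $b$ have a neighbour in every component of $G \setminus \{a,b\}$, so every component carries an $ab$-path; and no component can be a single node (it would have degree two), so grouping one component as $X$ and the rest as $Y$ gives two sides of size at least two. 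The remaining regime is that $G$ has a degree-two node $w$; since no two degree-two nodes are adjacent (there is no long branch), $N(w) = \{a,b\}$ with $a,b$ non-adjacent and of degree at least three. The key claim is that this configuration, in a square-free triangle-free graph with no long branch, forces a cycle with a unique chord, so we may output $G \not\in {\cal C}$.

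The hard part is exactly this last claim, and I would prove it in the style of the claims of Section~\ref{s:proof}. Let $P = a\,p_1 \dots p_t\, b$ be a shortest $ab$-path avoiding $w$ (it exists because $w$ is not a 1-cutset); if $t=1$ then $p_1$ and $w$ are two common neighbours of $a,b$, a square, so $t \ge 2$ and $H = w\,a\,p_1 \dots p_t\,b\,w$ is a hole. Since $\deg a \ge 3$, $a$ has a neighbour $a^{*}$ off $H$ (a further neighbour of $a$ on $H$ would, by the shortest choice of $P$, shortcut it); analysing how $a^{*}$ and the corresponding extra neighbour of $b$ re-attach to $H$ through repeated use of Lemma~\ref{c:neighHole} should produce the required cycle with a unique chord, the absence of long branches guaranteeing that the relevant return paths are short enough to create a chord rather than another clean hole. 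The main obstacle is thus to make this attachment analysis watertight across all re-attachment patterns; everything else (branch tracing, the minimum-degree test, the Hopcroft--Tarjan call, component computation, grouping and the adjacency test) is plainly ${\cal O}(n+m)$, giving the claimed running time.
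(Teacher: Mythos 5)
Your first two regimes are sound and essentially coincide with Steps~1--2 and with the special case $G'=G$ of Steps~3--4 of the paper's algorithm. The problem is the third regime: the ``key claim'' --- that a degree-two node $w$ with non-adjacent neighbours of degree at least three, in a triangle-free, square-free graph with no branch of length at least three (and no 1-cutset, no proper 1-join, not basic), forces a cycle with a unique chord --- is not merely hard to make watertight, it is false. A counterexample: glue, via Operation ${\cal O}_3$ of Theorem~\ref{th:structure}, the Petersen graph minus a node (taking for the degree-two node $v$ of that operation one of its three degree-two nodes) with the graph obtained from $K_4$ by subdividing every edge once (taking for $u$ one of the subdivision nodes). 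The resulting 15-node graph $G$ is in ${\cal C}$ by Theorem~\ref{th:structure}; it is connected with no 1-cutset, triangle- and square-free (hence has no proper 1-join); it is not basic, since it retains an edge between two degree-three nodes inherited from the Petersen part (so it is not strongly 2-bipartite) and it is too large to be an induced subgraph of the Petersen or Heawood graph; every branch has length exactly two, so your first regime does not fire; and it still has degree-two nodes (the surviving subdivision nodes of $K_4$, and some Petersen nodes). Your algorithm would therefore enter the third regime and declare $G\not\in{\cal C}$, which is wrong: $G$ has a proper 2-cutset, namely the pair $\{w_1,w_2\}$ of nodes created by the gluing. Intuitively, the presence of a degree-two node whose neighbourhood is a non-edge is exactly the local picture of a strongly 2-bipartite graph, and such pictures occur abundantly in decomposable (non-basic) members of ${\cal C}$; they carry no contradiction.

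What the paper does instead at this point is a contraction step that your proposal is missing. When every maximal degree-two path has length zero, it builds an auxiliary graph $G'$ by deleting each degree-two node and joining its two neighbours by an edge, and runs Hopcroft--Tarjan on $G'$. Any 2-cutset of $G'$ is a 2-cutset of $G$ both of whose sides contain a node of degree at least three in $G$, hence both sides have size at least two, and after the $ab\in E(G)$ test via Lemma~\ref{abedge} one obtains a genuine proper 2-cutset of $G$. Conversely, if $G\in{\cal C}$, the proper 2-cutset $\{a,b\}$ guaranteed by Theorem~\ref{th:1} either survives as a 2-cutset of $G'$, or has one side consisting entirely of degree-two nodes, which forces two such nodes adjacent to both $a$ and $b$, i.e.\ a square, contradicting Theorem~\ref{th:square}. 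Replacing your third regime by this contraction (which also subsumes your minimum-degree-three regime) repairs the argument; the rest of your proposal --- the branch tracing, the use of Lemma~\ref{abedge}, and the complexity accounting --- is fine.
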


\begin{proof}
  Consider the following algorithm.

  \begin{description}
  \item[\rule{1em}{0ex}\sc Step 1:] Let $G_2$ be the subgraph of~$G$
    induced by the degree 2 nodes of $G$.  Since $G$ is connected, has
    no 1-cutset and is not a chordless cycle (because $C_k$, $3\leq
    k\neq 4$, is basic and $C_4$ admits a proper 1-join), the
    connected components of $G_2$ are paths, and for every such path
    $P$, the attachment of $P$ over $G\setminus P$ consists of two
    distinct nodes of $G$ that are both of degree at least 3 in $G$.
    If there exists a path $P$ in $G_2$ whose attachment $\{ a,b \}$
    over $G \setminus P$ is such that $ab$ is an edge, then output $G
    \not \in {\cal C}$ and stop.

  \item[\rule{1em}{0ex}\sc Step 2:] If there is a path $P$ in $G_2$ of
    length at least~1 then let $\{a, b\}$ be the attachment of $P$
    over $G \setminus P$.  Output $\{ a,b \}$ as a proper 2-cutset of
    $G$ and stop.
 
  \item[\rule{1em}{0ex}\sc Step 3:] Now all paths of $G_2$ are of
    length~0.  Create the graph $G'$ from $G\setminus V(G_2)$ as
    follows: for every path $P$ of $G_2$ put an edge between the pair
    of nodes that are the attachment of $P$ over $G\setminus P$.  Note
    that if $G_2$ is empty, then $G=G'$.  If $G'$ has no 2-cutset,
    output $G\notin {\cal C}$ and stop.

  \item[\rule{1em}{0ex}\sc Step 4:] Find a 2-cutset $\{ a,b \}$ of
    $G'$. Note that $\{ a,b \}$ is also a 2-cutset of $G$. If $ab$ is
    an edge of $G$, then output $G \not \in {\cal C}$ and
    stop. Otherwise, output $\{a, b\}$ as a proper 2-cutset of $G$ and
    stop.
\end{description}

Since 2-cutsets in Step~3 and~4 can be found in time ${\cal O}(n+m)$
by the Hopcroft and Tarjan algorithm \cite{hopcroft.tarjan:3con}, it
is clear that the above algorithm can be implemented to run in time
${\cal O} (n+m)$.  We now prove the correctness of the algorithm.

First note that since $G$ is not a clique and it does not have a
1-cutset, all nodes of $G$ have degree at least 2.  Suppose the
algorithm stops in Step~1. So there exists a path $P$ in $G_2$ whose
attachment over $G \setminus P$ induces an edge $ab$.  Since $d_G (a)
\geq 3$, it follows that $V(G) \setminus (V(P) \cup \{ a,b \} ) \neq
\emptyset$, and hence $\{ a,b \}$ is a 2-cutset of $G$. So by Lemma
\ref{abedge}, the algorithm correctly identifies $G$ as not belonging
to ${\cal C}$.

Suppose the algorithm stops in Step~2.  By Step~1, $ab$ is not an
edge. Since $d_G (a) \geq 3$, $|V(G) \setminus (V(P) \cup \{ a,b \} )|
\geq 2$, and since $P$ is of length at least 1, $|V(P)| \geq 2$.
Since $G$ has no 1-cutset, there is an $ab$-path in $G \setminus P$.
Hence $\{ a,b \}$ is a proper 2-cutset of $G$.

Suppose the algorithm stops in Step~3.  This means that $G'$ has no
2-cutset.  Since the output is $G\notin {\cal C}$, the only problem is
when $G\in {\cal C}$, so let us suppose for a contradiction $G\in
{\cal C}$.  Then by Theorem~\ref{th:1}, $G$ has a proper 2-cutset $\{
a,b \}$ with split $(X,Y,a,b)$.  Since $d_G (a) \geq 3$ and $d_G (b)
\geq 3$, $\{ a,b \} \subseteq V(G')$. If $|X \cap V(G')| \geq 1$ and
$|Y \cap V(G')| \geq 1$, then $\{ a,b \}$ is a 2-cutset of $G'$, so we
may assume w.l.o.g.\ that $X \cap V(G')=\emptyset$.  Since $\{ a,b \}$
is a proper 2-cutset of $G$, $|X|\geq 2$.  So $X$ contains two nodes
$u_1$ and $u_2$ that are both of degree 2 in $G$. By Step~2, $u_1$ and
$u_2$ are paths of $G_2$ of length~0. Since $\{ a,b \}$ is a cutset of
$G$, and $G$ is connected and has no 1-cutset, it follows that both
$u_1$ and $u_2$ are adjacent to both $a$ and $b$. So $au_1bu_2$ is a
square of $G$, so by Theorem~\ref{th:square}, $G$ must have a proper
1-join, a 1-cutset or must be basic, in either case a contradiction.

Suppose the algorithm stops in Step~4.  Let $\{ a,b \}$ be a 2-cutset
of $G'$. Then clearly $\{ a,b \}$ is also a 2-cutset of~$G$. If $ab$
is an edge of $G$, then by Lemma \ref{abedge} the algorithm correctly
identifies $G$ as not belonging to ${\cal C}$.  So assume $ab$ is not
an edge of $G$. Note that for every $u \in V(G')$, $d_G (u) \geq
3$. In particular, since $a,b \in V(G')$, $d_G (a)\geq 3$ and $d_G (b)
\geq 3$. Let $C'$ be a connected component of $G' \setminus \{ a,b
\}$. Let $u$ be a node of $C'$, and let $C$ be the connected component
of $G \setminus \{ a,b \}$ that contains $u$.  Since $d_G (u) \geq 3$,
it follows that $|V(C)| \geq 2$.  This is true of every connected
component of $G' \setminus \{ a,b \}$.  Also, since $G$ is connected
and has no 1-cutset, for every connected component $C$ of $G \setminus
\{ a,b \}$ there is an $ab$-path in $G[C\cup \{ a,b \} ]$.  Therefore
$\{ a,b \}$ is a proper 2-cutset of $G$.
\end{proof}

A \emph{decomposition tree} of a graph $G$ is a rooted tree $T_G$
such that the following hold:

\begin{enumerate}
\item 
  $G$ is the root of $T_G$.
\item 
  For every non-leaf node $H$ of $T_G$, the children of $H$ are the
  blocks of decomposition of $H$ w.r.t.\ a 1-cutset, a proper 1-join
  or a proper 2-cutset of $H$.
\end{enumerate}

\begin{lemma}
  \label{l:sizeT}
  Let $G$ be any graph and let $T$ be a decomposition tree of $G$.
  Then $T$ has size ${\cal O}(n)$.
\end{lemma}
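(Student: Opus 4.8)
The plan is to first reduce the statement to a count of leaves, and then bound that count by an induction on real vertices. Since every non-leaf node $H$ of $T$ has exactly two children, namely the two blocks $H_X$ and $H_Y$ of its decomposition, $T$ is a binary tree; hence if $\ell$ is its number of leaves then $T$ has exactly $2\ell-1$ nodes, and it suffices to prove $\ell={\cal O}(n)$. Before starting I would record the elementary arithmetic of one decomposition step $H\to(H_X,H_Y)$, which follows directly from the definitions of the three cutsets and of their blocks (in Section~\ref{sec:st} and Section~\ref{sec:decomp}): each block is connected, $2\le |V(H_X)|,|V(H_Y)|<|V(H)|$, and $|V(H_X)|+|V(H_Y)|\le |V(H)|+4$, the surplus being the markers introduced plus the duplicated cut-vertices. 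Here I use that a 1-cutset has non-empty sides and that a proper 1-join and a proper 2-cutset have $|X|,|Y|\ge 2$.

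The core is an induction on $|V(H)|$ aiming at a bound of the form $\ell(H)\le r(H)-1$, where $r(H)$ denotes the number of vertices of the block $H$ that are genuine vertices of $G$, as opposed to markers. The inductive step is governed by how the real vertices are split between the two sides. For a 1-cutset only the cut-vertex $v$ may lie in both blocks, and for a proper 1-join no real vertex is shared at all; in either case $r(H_X)+r(H_Y)\le r(H)+1$, so applying the hypothesis to each side yields $\ell(H)=\ell(H_X)+\ell(H_Y)\le(r(H_X)-1)+(r(H_Y)-1)\le r(H)-1$. Summed over the tree this is exactly the statement that each real vertex of $G$ lies in $1+t$ leaves, where $t$ counts the blocks in which it serves as a shared cut-vertex; it is the scarcity of such sharings that keeps the total linear.

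The main obstacle is the proper 2-cutset. There both cut-vertices $a$ and $b$ are shared by $H_X$ and $H_Y$, and in the blocks of Section~\ref{sec:decomp} possibly a third vertex $c$ as well, so a priori $r(H_X)+r(H_Y)$ exceeds $r(H)$ by up to three and the clean inequality above breaks. Resolving this is precisely where \emph{properness} must be used: since $|X|\ge 2$, $|Y|\ge 2$ and $a,b$ have degree at least three, each block keeps at least two \emph{private} real vertices, and I would charge the shared cut-vertices to these private vertices so that the amortised cost per leaf remains one real vertex. Making this rigorous also forces one to control markers, since a marker created at one step may later be used as a cut-vertex and duplicated; here one uses that markers have bounded degree and that at most two are created per decomposition, so they cannot proliferate beyond the real vertices that account for them. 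Finally, one must check the base case — that no leaf has fewer than two real vertices — which again rests on the properness conditions ruling out degenerate single-real-vertex blocks. Once the proper 2-cutset step and this marker bookkeeping are settled, the induction gives $\ell\le n-1$, whence $T$ has $2\ell-1={\cal O}(n)$ nodes.
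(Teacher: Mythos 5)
Your reduction to counting leaves is fine (every internal node has exactly two children, so the size is $2\ell-1$), and you correctly locate the difficulty at the proper 2-cutset. But the proof has a genuine gap precisely there: the invariant you propose, $\ell(H)\le r(H)-1$ with $r$ counting only real vertices, provably does not close. At a proper 2-cutset you concede $r(H_X)+r(H_Y)\le r(H)+3$, which only yields $\ell(H)\le r(H)+1$, and the ``charging of shared cut-vertices to private real vertices'' that is supposed to rescue this is never actually carried out -- no corrected invariant is stated, and the accompanying marker bookkeeping (markers can later become cut-vertices and be duplicated again) is likewise deferred. The base case is also shaky: a leaf block can in principle contain accumulated markers and few real vertices, and the assertion that properness rules out single-real-vertex leaves is not justified. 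As written, the argument is a plan with the hardest step missing rather than a proof.

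The paper's resolution is simpler and worth contrasting with your attempt: instead of distinguishing real vertices from markers, it uses the potential $\varphi(H)=|V(H)|-4$ counting \emph{all} vertices, and checks directly from the block definitions that $\varphi(H)\ge\varphi(H_X)+\varphi(H_Y)$ for every one of the three decompositions (e.g.\ for a proper 2-cutset with a non-real marker, $\varphi(H)=|X|+|Y|-2$ while $\varphi(H_X)+\varphi(H_Y)=(|X|-1)+(|Y|-1)$, so equality holds and nothing needs to be amortised). Restricting to the subtree $T'$ of nodes on at least five vertices makes $\varphi\ge 1$ at every node, so $T'$ has at most $\varphi(G)=n-4$ leaves, and the sub-five-vertex graphs hanging below $T'$ each generate only a bounded subtree. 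If you want to salvage your approach, the lesson is to choose a potential that absorbs the marker and duplication overhead uniformly (subtracting $4$ does exactly that) rather than trying to track real vertices and repair the deficit afterwards.
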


\begin{proof}
  Note that $T$ is finite since the children of a graph are smaller
  than its parent.  Let $T'$ be the subtree of $T$ on the nodes that
  are graphs on at least five nodes.  For any graph $G$ we define
  $\varphi(G) = |V(G)| - 4$. It is easily seen that when $G_{X}$,
  $G_{Y}$ are the blocks of $G$ with respect to some decomposition,
  then $\varphi(G) \geq \varphi(G_{X}) + \varphi(G_{Y})$.  Indeed, for
  a 2-cutset with split $(X, Y, a, b)$ where the marker $c$ is not a
  real node of $G$ the inequality follows from $\varphi(G) =
  |X|+|Y|-2$, $\varphi(G_X) = |X|-1$ and $\varphi(G_Y) = |Y|-1$.  For
  the other decompositions, the proof is similar.

  Since in $T'$ every node is a graph on at least five nodes, every
  node $F$ of $T'$ is such that $\varphi(F) \geq 1$. So the number of
  leaves of $T'$ is at most $\varphi(G)$. Hence the size of $T'$ is
  ${\cal O} (n)$.  It follows that the size of $T$ is also ${\cal O}
  (n)$, since the decomposition of the graphs that have fewer than 5
  nodes is bounded by a constant.
\end{proof}

Decomposition trees would be sufficient for a recognition algorithm,
but for coloring we need a more sophisticated kind of tree.  A
\emph{proper decomposition tree} of a connected graph $G\in {\cal C}$
is a rooted tree $T_G$ such that the following hold:

\begin{enumerate}
\item 
  $G$ is the root of $T_G$.
\item
  Every node of $T_G$ is a connected graph.
\item 
  Every leaf of $T_G$ is basic.
\item 
  Every non-leaf node $H$ of $T_G$ is of one of the following type:
  
  {\sc Type 1:} the children of $H$ in $T_G$ are the blocks of
  decomposition w.r.t.\ a 1-cutset or a proper 1-join;

  {\sc Type 2:} $H$ and all its descendants are Petersen-, triangle-,
  square-free and have no 1-cutset and no proper 1-join.  Moreover the
  children of $H$ in $T_G$ are the blocks of decomposition w.r.t.\ a
  proper 2-cutset and every non-leaf descendant of $H$ is of type~2.
\item
  If a node of $T_G$ is a triangle-free graph then all its descendants
  are triangle-free graphs.  
\end{enumerate}

\begin{theorem}\label{dtalg}
  There is an algorithm with the following specifications.
  \begin{description}
  \item[\rule{1em}{0ex}\sc Input:] A connected graph $G$.
  \item[\rule{1em}{0ex}\sc Output:] $G$ is correctly identified as not
    belonging to ${\cal C}$, or if $G \in {\cal C}$, a proper decomposition
    tree for $G$.
  \item[\rule{1em}{0ex}\sc Running time:] ${\cal O}(mn)$.
  \end{description}
\end{theorem}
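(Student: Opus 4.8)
The plan is to build $T_G$ top-down by a recursive procedure that, at each connected graph $H$ it processes, attempts our three decompositions in a fixed order dictated by Theorems~\ref{th:triangle} and~\ref{th:square}, stopping as soon as $H$ is basic. Concretely, the procedure first tests whether $H$ is basic (a clique, a hole of length at least~7, a strongly 2-bipartite graph, or isomorphic to an induced subgraph of the Petersen or Heawood graph); if so, $H$ becomes a leaf. Otherwise it searches for a 1-cutset with the algorithm of Hopcroft and Tarjan~\cite{hopcroft.tarjan:447,tarjan:dfs}, and if one is found it creates a type~1 node and recurses on the two blocks. If $H$ has no 1-cutset (so in particular $H$ is not a clique, hence genuinely non-basic), it calls Dahlhaus' algorithm~\cite{dahlhaus:split} to find a 1-join; by Lemma~\ref{p1j} an improper 1-join certifies $H \notin {\cal C}$ and the whole algorithm halts, while a proper 1-join yields another type~1 node. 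Finally, when $H$ is non-basic with no 1-cutset and no 1-join, it runs the algorithm of Lemma~\ref{p2calg}, which either certifies $H \notin {\cal C}$ or returns a proper 2-cutset; in the latter case $H$ becomes a type~2 node whose children are the (re-defined, marker-reusing) proper 2-cutset blocks.

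For correctness I would argue both directions through Lemma~\ref{iff}. If the procedure halts with verdict "$H \notin {\cal C}$" at some node $H$, then $H \notin {\cal C}$ by Lemma~\ref{p1j} or Lemma~\ref{p2calg}; since $H$ is reached from $G$ by a chain of decompositions, the contrapositive of Lemma~\ref{iff} propagates $H \notin {\cal C}$ up to $G \notin {\cal C}$. Conversely, if $G \in {\cal C}$ then every node produced lies in ${\cal C}$ (top-down direction of Lemma~\ref{iff}), so no false rejection can occur: at a non-clique node with no 1-cutset any 1-join found must be proper by Lemma~\ref{p1j}, and Lemma~\ref{p2calg} cannot certify non-membership; moreover Theorem~\ref{th:1} guarantees that a non-basic node of ${\cal C}$ with no 1-cutset and no proper 1-join really has a proper 2-cutset, so the recursion always reaches basic leaves. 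Termination is clear since every block is strictly smaller than its parent.

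The main obstacle, as I see it, is verifying that the output actually satisfies the structural constraints of a \emph{proper} decomposition tree, in particular the global type~2 conditions. Whenever the procedure creates a type~2 node $H$, it has already checked that $H$ is non-basic with no 1-cutset and no 1-join, and (in a successful run) $H \in {\cal C}$; Theorem~\ref{th:triangle} then forces $H$ triangle-free and Theorem~\ref{th:square} forces it square- and Petersen-free, since a triangle, a square, or the Petersen/Heawood graph would make $H$ basic or produce a 1-cutset or proper 1-join. Lemma~\ref{l0} shows these five properties are inherited by proper 2-cutset blocks, so all descendants of $H$ share them; such descendants have no 1-cutset and no proper 1-join (indeed no 1-join at all, as an improper one would contradict $H'\in{\cal C}$ via Lemma~\ref{p1j}), hence are decomposed only by proper 2-cutsets or are basic leaves, making every non-leaf descendant of $H$ of type~2 as required. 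Condition~5 and the connectivity requirement are handled by Lemma~\ref{l:Htriangle}, which guarantees that connectivity and triangle-freeness are preserved by all three kinds of blocks and therefore propagate to every descendant.

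For the running time I would combine Lemma~\ref{l:sizeT} with the linear-time subroutines. Each test or search performed at a node $H$ --- recognizing a basic graph, and finding a 1-cutset, a 1-join, or a proper 2-cutset via~\cite{hopcroft.tarjan:447,dahlhaus:split} and Lemmas~\ref{p1j} and~\ref{p2calg} --- runs in time ${\cal O}(n_H + m_H)$. Because every block is strictly smaller than its parent, each node has at most $n$ vertices, and since a block merely adds a constant number of marker edges to an induced subgraph of its parent, one gets $m_H \leq m + {\cal O}(n) = {\cal O}(m)$; thus the work per node is ${\cal O}(n+m)$. By Lemma~\ref{l:sizeT} the tree has ${\cal O}(n)$ nodes, so the total time is ${\cal O}(n(n+m)) = {\cal O}(nm)$, using $n = {\cal O}(m)$ for the connected input~$G$.
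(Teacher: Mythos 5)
Your proposal is correct and follows essentially the same route as the paper: the same ordering of tests at each node (basic, then 1-cutset, then 1-join handled via Lemma~\ref{p1j}, then proper 2-cutset via Lemma~\ref{p2calg}), the same correctness argument through Lemmas~\ref{iff}, \ref{l0} and~\ref{l:Htriangle} together with Theorems~\ref{th:triangle}, \ref{th:square} and~\ref{th:nosquare}, and the same complexity bound obtained by combining the ${\cal O}(n)$ tree size of Lemma~\ref{l:sizeT} with linear work per node. The only detail you gloss over is how to recognize a strongly 2-bipartite leaf in time ${\cal O}(n+m)$ (square-freeness testing is not linear in general); the paper handles this by observing that once the degree conditions are verified, $H$ contains a square if and only if it has a 1-join, so Dahlhaus' algorithm suffices.
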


\begin{proof}
  Consider the following algorithm.

  \begin{description}
  \item[\rule{1em}{0ex}\sc Step~1:] Let $G$ be the root of $T_G$. 

  \item[\rule{1em}{0ex}\sc Step~2:] If all the leaves of $T_G$ have
    been declared as LEAF NODE, then output $T_G$ and stop. Otherwise,
    let $H$ be a leaf of $T_G$ that has not been declared a LEAF NODE.

  \item[\rule{1em}{0ex}\sc Step~3:] If $H$ is basic, declare $H$ to be
    a LEAF NODE and go to Step~2.

  \item[\rule{1em}{0ex}\sc Step~4:] If $H$ has a 1-cutset, then let
    the children of $H$ be the blocks of decomposition w.r.t.\ this
    1-cutset, and go to Step~2.
 
  \item[\rule{1em}{0ex}\sc Step~5:] If $H$ has a 1-join, then check
    whether this 1-join is proper. If it is, then let the children of
    $H$ be the blocks of decomposition by this proper 1-join, and go
    to Step~2.  If it is not, then output $G \not \in {\cal C}$ and
    stop.
 
  \item[\rule{1em}{0ex}\sc Step~6:] Apply algorithm from
    Lemma~\ref{p2calg} to $H$.  If the output is that $G \not \in
    {\cal C}$ then output the same and stop.  Otherwise a proper
    2-cutset is found.  Then let the children of $H$ be the blocks of
    decomposition by this proper 2-cutset, and go to Step~2.
  \end{description}

  Note that this algorithm stops, because the children of a graph are
  smaller than its parent.  We first prove the correctness of the
  algorithm.  If the algorithm stops in Step~5, then by
  Lemma~\ref{p1j}, $G$ is correctly identified as not belonging to
  ${\cal C}$.  If the algorithm stops in Step~6, then by
  Lemma~\ref{p2calg}, $G$ is correctly identified as not belonging to
  ${\cal C}$.  So we may assume that the algorithm stops in Step~2.
  This means that the algorithm outputs a decomposition tree~$T_G$.
  By Lemma~\ref{iff}, it follows that $G\in {\cal C}$ since every leaf
  of $T_G$ is basic.  Let us check that $T_G$ is proper.

  Clearly, $G$ is the root of $T_G$ and every leaf of $T_G$ is basic.
  Since $G$ is connected, and by the construction of blocks of
  decomposition, all nodes of $T_G$ are connected graphs.  Let $H$ be
  a non-leaf node of $T_G$.  Note that $H$ is not basic because of
  Step~3.
 
  If $H$ is Petersen-, triangle- and square-free, has no 1-cutset and
  no proper 1-join, then by Theorem~\ref{th:nosquare}, $H$ has a
  proper 2-cutset and is decomposed along a proper 2-cutset because of
  Step~6.  Also, by Lemma~\ref{l0} the children of~$H$ are also
  connected, Petersen-, triangle-, square-free, and have no 1-cutset
  and no proper 1-join.  So by induction, every non-leaf descendant of
  $H$ is decomposed along proper 2-cutsets and $H$ is of type~2.

  Else, $H$ contains a triangle, a square, the Petersen graph or has a
  1-cutset or a proper 1-join.  By Theorems~\ref{th:triangle} and
  \ref{th:square}, $H$ must have a 1-cutset or a proper 1-join.  Note
  that this 1-cutset or proper 1-join is discovered by the algorithm
  rather than a possible proper 2-cutset.  So, $H$ is of type~1.

  So every non-leaf node of $T_G$ is of type 1 or 2, and by
  Lemma~\ref{l:Htriangle}, if a node of $T_G$ is a triangle-free graph
  then all its descendants are triangle-free graphs.  We have proved
  that $T_G$ is a proper decomposition tree.

  We now show that the algorithm can be implemented to run in time
  ${\cal O} (nm)$.  Testing whether a graph is a clique in Step~3
  relies only on a check of the degrees: $H$ is a clique if and only
  if every node has degree $n-1$, so this can be done in time ${\cal
    O} (n+m)$.  To decide whether a graph is strongly 2-bipartite, we
  also check the degrees to be sure that nodes of degree~2 and nodes
  of degree at least~3 form stable sets.  We still have to check that
  $H$ is square-free, but this can be done by running the ${\cal O}
  (n+m)$ algorithm of Dahlhaus~\cite{dahlhaus:split} for 1-joins
  because at this step, $H$ contains a square if and only if $H$ has a
  1-join.

  To find a 1-cutset in Step~4, we use the ${\cal O} (n+m)$ algorithm
  of Hopcroft and Tarjan~\cite{hopcroft.tarjan:447,tarjan:dfs}.  To
  find a 1-join in Step~5, we use the ${\cal O} (n+m)$ algorithm of
  Dahlhaus~\cite{dahlhaus:split}.  By Lemma~\ref{p2calg}, Step~6 can
  be implemented to run in time ${\cal O} (n+m)$.  Now we note that
  when the algorithm stops, it has computed a decomposition tree (that
  will be output or not when $G\notin {\cal C}$), and the numbers of
  steps processed by the algorithm is bounded by the size of this
  tree.  By Lemma~\ref{l:sizeT} the size of the tree is ${\cal O}
  (n)$, so we have to run ${\cal O} (n)$ times each of the steps, and
  hence the overall complexity is ${\cal O} (nm)$.
\end{proof}

\begin{theorem}
  There exists an ${\cal O}(nm)$-time algorithm that decides whether a
  graph is in ${\cal C}$.
\end{theorem}

\begin{proof}
  Apply the ${\cal O} (nm)$ algorithm from Theorem \ref{dtalg}.  If
  the output is $G \not \in {\cal C}$ then output the same.  Else $G$
  has a proper decomposition tree and $G\in {\cal C}$ by
  Lemma~\ref{iff}.
\end{proof}

\section{Coloring}
\label{s:bounding}

Let us call \emph{third color} of a graph any stable set that contains
at least one node of every odd cycle. Any graph that admits a third
color $S$ is 3-colorable: give color~3 to the third color; since $G\sm
S$ contains no odd cycle, it is bipartite: color it with colors~1,
2. We shall prove by induction that any triangle-free graph in $\cal
C$ has a third color. But for the sake of induction, we need to prove
a stronger statement.

Let us call \emph{strong third color} of a graph any stable set that
contains at least one node of every cycle (odd or even).  By $N[v]$ we
denote $\{v\} \cup N(v)$. When $v$ is a node of a graph $G$, a pair of
disjoint subsets $(R, T)$ of $V(G)$ is \emph{admissible with respect
  to $G$ and $v$} if one of the following holds (see
Fig.~\ref{f:admis}):

\begin{itemize}
\item
  $T = N(v)$ and $R = \{v\}$;
\item
  $T = \emptyset$ and $R = N[v]$;
\item
  $v$ is of degree two, $N(v) = \{u, w\}$, $T = \{u\}$, $R = \{v,
  w\}$;
\item
  $v$ is of degree two, $N(v) = \{u, w\}$, $T = \{u\}$, $R = N[w]$;
\item
  $v$ is of degree two, $N(v) = \{u, w\}$, $T = \emptyset$, $R = \{u\}
  \cup N[w]$.
\end{itemize}

\begin{figure}
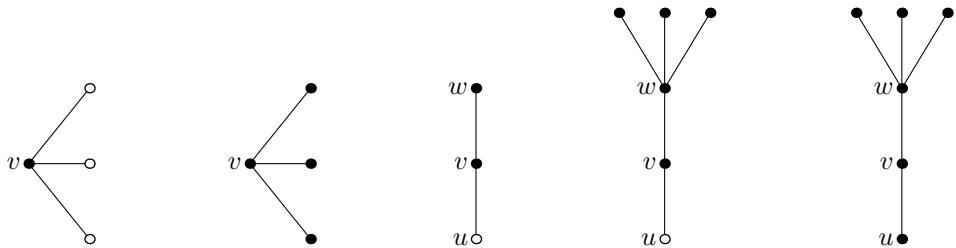

\center
\includegraphics{figChord.11}\rule{4.1em}{0ex}
\includegraphics{figChord.12}\rule{4.1em}{0ex}
\includegraphics{figChord.13}\rule{4.1em}{0ex}
\includegraphics{figChord.14}\rule{4.1em}{0ex}
\includegraphics{figChord.15}
\caption{Five examples of admissible pairs (nodes of $T$ are white,
  nodes of $R$ are black)\label{f:admis}}
\end{figure}

We say that a pair of disjoint subsets $(R,T)$ is an {\em admissible
  pair} of $G$ if for some $v \in V(G)$, $(R,T)$ is admissible
w.r.t.\ $G,v$.  An admissible pair $(R, T)$ should be seen as a
constraint for coloring: we will look for third colors (sometimes
strong, sometimes not) that must contain every node of $T$ and no node
of $R$.  We will do this first in basic graphs, and then by induction
in all triangle-free graphs of $\cal C$, thus proving that they are
3-colorable.

\begin{lemma}
  \label{th:tcbasic}
  Let $G$ be a triangle-free basic graph that is not the Petersen
  graph.  Let $(R, T)$ be an admissible pair of~$G$.  Then $G$ admits
  a strong third color $S$ such that $T \subseteq S$ and $S \cap R =
  \emptyset$.  Furthermore, $S$ can be found in time ${\cal O}(n+m)$.
\end{lemma}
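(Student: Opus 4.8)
The plan is to prove the statement by a case analysis following the definition of basic graphs, since $G$ is connected, triangle-free, basic and not the Petersen graph. The guiding observation is that a stable set $S$ is a strong third color exactly when $G\sm S$ is a forest, so in each case the task reduces to exhibiting a stable set $S$ with $T\subseteq S$, $S\cap R=\emptyset$ and $G\sm S$ acyclic, and to noting that it is produced by an obvious linear-time procedure. Two cases are immediate. If $G$ is a triangle-free clique then $G$ is $K_1$ or $K_2$, it has no cycle at all, and $S=T$ works: here $T$ is a stable set because in a triangle-free graph $N(v)$ is stable, and the other admissible forms of $T$ are empty or a single node. If $G$ is a hole of length at least $7$ then $G$ has a single cycle, so every nonempty stable subset of $V(G)\sm R$ containing $T$ is a strong third color, and the length being at least $7$ leaves enough room to pick one in each of the five admissible configurations.

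When $G$ is the Heawood graph I would exploit that it is bipartite and $3$-regular, with both sides of its bipartition stable of size $7$; since every node has degree $3$, only the first two admissible types occur. For $T=N(v)$, $R=\{v\}$, take $S$ to be the side containing $N(v)$, namely the side opposite $v$: then $T\subseteq S$, $v\notin S$, and $G\sm S$ is the other side, which is edgeless. For $T=\emptyset$, $R=N[v]$, take $S$ to be the side of $v$ with $v$ removed: then $G\sm S$ is the opposite side together with $v$ joined to its three neighbours, a star plus isolated nodes, hence a forest.

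The heart of the argument is the strongly $2$-bipartite case, with bipartition $(X,Y)$ where $X$ is the set of degree-$2$ nodes and $Y$ the nodes of degree at least $3$. The key structural remark is that each $x\in X$ behaves like an edge joining its two neighbours in $Y$, and that square-freeness guarantees that no two nodes of $X$ have the same pair of neighbours; consequently a stable set $S$ is a strong third color as soon as the nodes of $X\sm S$ that keep both ends in $Y\sm S$ form an acyclic ``edge set'' on $Y\sm S$. Since $S=X$ already makes $G\sm S=G[Y]$ edgeless, in each configuration I would perturb $X$ minimally to respect $R$ and $T$. Writing $N(v)=\{u,w\}$ when $v\in X$, the five cases are resolved by $S=X$ and $S=X\sm N(v)$ when $v\in Y$, and by $S=\{u,w\}\cup\bigl(X\sm(N(u)\cup N(w))\bigr)$, $S=X\sm\{v\}$, $S=\{u\}\cup(X\sm N(u))$, $S=\{u\}\cup\bigl(X\sm(N(u)\cup N(w))\bigr)$, and $S=X\sm N(w)$ for the five admissible pairs attached to $v\in X$. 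In each case stability is immediate, since an $X$-node placed in $S$ is never adjacent to the one or two $Y$-nodes placed in $S$ (we delete their neighbourhoods from the $X$-part), and $T\subseteq S$, $S\cap R=\emptyset$ are checked directly. Acyclicity follows because every node of $X$ surviving in $G\sm S$ either loses at least one of its two neighbours, or else the surviving $X$-edges all share a common $Y$-endpoint and thus form a star; here square-freeness is exactly what rules out a two-edge cycle between two $X$-nodes with a common pair of neighbours.

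Finally, if $G$ is a proper induced subgraph of the Petersen graph then $G$ has at most nine nodes and maximum degree at most three, so it belongs to a fixed finite list of graphs; for each of them and each of the finitely many admissible pairs the required $S$ is exhibited by inspection, and since the graph has bounded size such an $S$ is trivially found in time ${\cal O}(n+m)$. I expect the main obstacle to be the strongly $2$-bipartite case, and within it the configurations where $T$ forces one of the degree-at-least-$3$ nodes $u$ into $S$ while a neighbourhood $N(w)$ must stay out of $S$: there one cannot simply take $S\subseteq X$, and the forest condition has to be re-established by hand, using square-freeness to show that the $X$-nodes kept in $G\sm S$ attach to $w$ in a star rather than closing a cycle.
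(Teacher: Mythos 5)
Your case analysis does not cover all triangle-free basic graphs: you treat only the full Heawood graph, and only the two admissible types that occur at degree-3 nodes, whereas the basic class consists of \emph{all} induced subgraphs of the Heawood graph. A graph such as the Heawood graph minus one node is basic, falls under none of your other cases (it is not a clique, not a hole, not an induced subgraph of the Petersen graph since it has 13 nodes, and not strongly 2-bipartite since its three degree-2 nodes and its degree-3 nodes do not form a bipartition), and it has nodes of degree two, so all five admissible types arise for it. This is a genuine gap, though it is repaired by exactly the idea you already use for the full Heawood graph: the paper observes that in every one of the five admissible configurations $T$ is contained in one side $A$ of the bipartition and $|A\cap R|\leq 2$, so $S=A\setminus R$ is a stable set with $T\subseteq S$ and $S\cap R=\emptyset$, and any cycle of $G\setminus S$ would have at most two nodes in $A$, hence be a square --- impossible in a square-free graph. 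This single observation handles cliques, strongly 2-bipartite graphs and all induced subgraphs of the Heawood graph at once, and would replace your five-way case analysis for strongly 2-bipartite graphs (which, for the record, is correct as written, including the two delicate configurations where all of $N(w)$ survives in $G\setminus S$ and square-freeness is needed to see that the surviving degree-2 nodes form a subdivided star around $w$).

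A smaller reservation concerns the proper induced subgraphs of the Petersen graph, which you dispatch ``by inspection.'' That is legitimate as a finite verification, but it is precisely where the statement is delicate: for the full Petersen graph and the pair $T=N(v)$, $R=\{v\}$ no strong third color exists (which is why the Petersen graph is excluded from this lemma and given only a non-strong third color in Lemma~\ref{th:tcPetersen}), and the claim for proper subgraphs holds only because the missing vertex can be exploited. The paper carries this out explicitly: using vertex-transitivity to fix $v=a_1$, it exhibits for each admissible type a set $Q$ with $\Pi\setminus Q$ a tree or a $C_6$ plus an isolated node, and in the latter case argues that the deleted vertex either breaks the residual $C_6$ or is a neighbour of $a_1$, in which case a fourth vertex is added to $Q$. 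Leaving this to unexamined inspection leaves the one genuinely nontrivial finite claim of the lemma unverified.
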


\begin{proof}
Note that squares may fail to admit strong third color that satisfies
our constraints (because when $G$ is a square $R=V(G)$ is possible).
But squares are not basic.  The proof follows from the following
claims, since it will be clear that all $S$'s found in them can be
found in time ${\cal O} (n+m)$.

  \begin{claim}\label{bc0}
    The lemma holds when $G$ is a chordless cycle of length at least~7.
  \end{claim}

  \begin{proofclaim}
    Because then any non-empty set of nodes is a strong third color.
    Since $R=V(G)$ is impossible because any path in $G[R]$ is of
    length at most $3$, it is always possible to pick a node of $G$
    not in $R$. 
  \end{proofclaim}

  \begin{claim}\label{bc1}
    The lemma holds when $G$ is a clique, a strongly 2-bipartite graph
    or is an induced subgraph of the Heawood graph.
  \end{claim}

  \begin{proofclaim}
    Note that $G$ is bipartite (for cliques, because it is triangle
    free).  Let $A, B$ be a bipartition of $G$.  Note that $A, B$ can
    be computed in linear time.  Up to symmetry between $A, B$ we may
    assume $T \subset A$ and $|A \cap R| \leq 2$. Let $S = A \sm
    R$. So $T \subset S$ and $S \cap R = \emptyset$. Moreover, $|A \sm
    S| \leq 2$. So every cycle in $G\sm S$ contains at most two nodes
    of $A$, and since $G$ is square-free, there is no such cycle.
  \end{proofclaim}

  \begin{claim}\label{pet3}
    The lemma holds when $G$ is a proper induced subgraph of the
    Petersen graph.
  \end{claim}

  \begin{proofclaim}
    Note that by assumption, $G$ is not the Petersen graph.  We use
    our notation for the Petersen graph $\Pi$. So $V(G) \subsetneq
    V(\Pi) = \{a_1, \dots, a_5, b_1, \dots, b_5\}$. Let $v$ be a node
    of $G$ and $(R, T)$ be admissible with respect to $G, v$.  We may
    assume $v = a_1$ since the Petersen graph is
    vertex-transitive. Note that $a_1 \in V(G)$.

    Suppose $T = N(a_1)$ and $R = \{a_1\}$.  Then we put $Q = \{a_2,
    a_5, b_1\}$ and we observe that $\Pi \sm Q$ is a $C_6$ plus an
    isolated node.  But some node $z\neq a_1$ of $\Pi$ is not a node
    of $G$.  If $z$ is in the $C_6$, then $S = Q \cap V(G)$ is a
    strong third color of $G$ such that $T \subseteq S$ and $S \cap R
    = \emptyset$. Else $z$ must be a neighbor of $a_1$, say $a_5$ up
    to symmetry. So, $S = (Q \cup \{a_4\}) \cap V(G)$ is a strong
    third color of $G$ such that $T \subseteq S$ and $S \cap R =
    \emptyset$.

    Suppose $T = \emptyset$ and $R = N[v]$.  Then we put $Q = \{a_3,
    b_3, b_5\}$ and we observe that $\Pi \sm Q$ is a tree.  So $S = Q
    \cap V(G)$ is a strong third color of $G$ such that $T \subseteq
    S$ and $S \cap R = \emptyset$.

    From here on we may assume that $v$ is a node of degree two of
    $G$.  So up to symmetry we may assume $b_1 \notin V(G)$ and
    $N_G(v) \subseteq \{a_2, a_5\}$.
    
    Suppose $N(v) = \{u, w\}$, $T = \{u\}$, $R = \{v, w\}$. So
    w.l.o.g.\ $T = \{a_2\}$ and $R = \{a_1, a_5\}$. Then we put $Q =
    \{a_2, b_3\}$ and we observe that $\Pi \sm Q$ is a tree.  So $S =
    Q \cap V(G)$ is a strong third color of $G$ such that $T \subseteq
    S$ and $S \cap R = \emptyset$.

    Suppose $N(v) = \{u, w\}$, $T = \{u\}$, $R = N[w]$.
    So up to symmetry we may assume $T = \{a_2\}$, $R = \{a_1, a_5,
    a_4, b_3\} \cap V(G)$. Then we put $Q = \{a_2, b_2, b_5\}$ and we
    observe that like in the previous case that $\Pi \sm Q$ is a tree.
    So $S = Q \cap V(G)$ is a strong third color of $G$ such that $T
    \subseteq S$ and $S \cap R = \emptyset$.

    Suppose $N(v) = \{u, w\}$, $T = \emptyset$, $R = \{u\} \cup
    N[w]$. So up to symmetry we may assume $R = \{a_1, a_2, a_5, b_3,
    a_4\}\cap V(G)$. We put $Q = \{b_1, a_3, b_4\}$ and we observe that $\Pi
    \sm Q$ is a tree.  So $S = Q \cap V(G)$ is a strong third color of
    $G$ such that $T \subseteq S$ and $S \cap R = \emptyset$.
  \end{proofclaim}
\end{proof}

\begin{lemma}
  \label{th:tcPetersen}
  Let $G$ be the Petersen graph and $(R, T)$ be an admissible pair
  of~$G$.  Then $G$ admits a third color $S$ (possibly not strong)
  such that $T \subseteq S$ and $S \cap R = \emptyset$.  Furthermore,
  $S$ can be found in time ${\cal O}(1)$.
\end{lemma}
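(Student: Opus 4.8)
The plan is to exploit the fact that the Petersen graph is $3$-regular, which immediately prunes the case analysis. Among the five shapes of admissible pairs listed before Lemma~\ref{th:tcbasic}, the last three all require a node $v$ of degree two; since no such node exists in the Petersen graph, these three cases are vacuous. So, writing $\Pi = \{a_1, \dots, a_5, b_1, \dots, b_5\}$ in the notation fixed earlier and using vertex-transitivity to assume $v = a_1$ (so that $N(a_1) = \{a_2, a_5, b_1\}$ and $N[a_1] = \{a_1, a_2, a_5, b_1\}$), I only have to handle two cases. In each, recall that a set $S$ is a third color exactly when $\Pi \sm S$ has no odd cycle, i.e.\ is bipartite; so the whole task reduces to exhibiting a stable set $S$ with $T\subseteq S$, $S\cap R=\emptyset$, and $\Pi\sm S$ bipartite.

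First, suppose $T = N(a_1) = \{a_2, a_5, b_1\}$ and $R = \{a_1\}$. I would take $S = \{a_2, a_5, b_1\}$ itself. Since $\Pi$ is triangle-free, $N(a_1)$ is stable, so $S$ is stable; plainly $T \subseteq S$ and $S \cap R = \emptyset$. The one real check is that $\Pi \sm S$ is bipartite: deleting $\{a_2, a_5, b_1\}$ isolates $a_1$ and leaves $a_3, a_4, b_2, b_3, b_4, b_5$ spanning exactly the six edges of the single cycle $a_3 b_2 b_3 b_4 b_5 a_4 a_3$, a hole of length six. An even cycle together with an isolated node contains no odd cycle, so $S$ is a third color. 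Note that this $S$ is \emph{not} strong, as the six-cycle survives; and a short incidence check shows $T$ cannot be enlarged into a stable set avoiding $R$ whose deletion kills that cycle (each candidate node of the hole is adjacent to a node of $T$). This is precisely why the lemma promises only an ordinary third color for the Petersen graph, in contrast with Lemma~\ref{th:tcbasic}.

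Second, suppose $T = \emptyset$ and $R = N[a_1] = \{a_1, a_2, a_5, b_1\}$. Here I would reuse the set $S = \{a_3, b_3, b_5\}$ from the corresponding case of Lemma~\ref{th:tcbasic}. One checks that $S$ is stable and disjoint from $R$, and that $\Pi \sm S$ is a tree: the seven remaining nodes span the six edges $a_1a_2,\ a_1a_5,\ a_4a_5,\ a_1b_1,\ b_1b_2,\ a_2b_4$, and this graph is connected and acyclic. A tree is bipartite, so $\Pi \sm S$ has no odd cycle and $S$ is a third color with $T=\emptyset\subseteq S$ and $S \cap R = \emptyset$.

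Finally, for the running time: $\Pi$ is a fixed graph on ten nodes and both sets $S$ are explicit, so once the automorphism carrying the given $v$ to $a_1$ is determined (constant work on a fixed graph) the correct $S$ is output in ${\cal O}(1)$ time. I do not expect any genuine obstacle here: the only conceptual point is recognizing that the first case forces a non-strong color, and the remainder is a finite verification on the fixed graph, the only risk being an arithmetic slip in the two adjacency checks above.
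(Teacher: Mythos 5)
Your proof is correct and follows essentially the same route as the paper's: both reduce to the two admissible-pair shapes that survive $3$-regularity, take $S=T=N(a_1)$ in the first case (leaving a $C_6$ plus an isolated node) and $S=\{a_3,b_3,b_5\}$ in the second (leaving a tree). Your explicit edge-by-edge verifications check out, and your remark that the first case admits no strong third color matches the paper's observation.
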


\begin{proof}
    We use our notation for the Petersen graph: $V(G) = \{a_1, \dots,
    a_5,$ $b_1, \dots, b_5\}$. Let $v$ be a node of $G$ and $(R, T)$
    be admissible with respect to $G, v$.  We may assume $v = a_1$
    since the Petersen graph is vertex-transitive. Since $v$ has
    degree three, we just have to study the following two cases:

    Suppose $T = N(a_1)$ and $R = \{a_1\}$.  Then we put $S = T$ and
    we observe that $G \sm S$ is a $C_6$ plus an isolated node. So $S$
    is a third color of $G$ such that $T \subseteq S$ and $S \cap R =
    \emptyset$.  Note that in this case there exists no strong third
    color that satisfies our constraints.

    Suppose $T = \emptyset$ and $R = N[v]$.  Then we put $S= \{a_3,
    b_3, b_5\}$ and we observe that $G \sm S$ is a tree.  So $S$ is a
    third color of $G$ such that $T \subseteq S$ and $S \cap R =
    \emptyset$.
 \end{proof}

\begin{lemma}
  \label{th:tcSP}
  Let $G$ be a non-basic, connected, triangle-free, square-free and
  Petersen-free graph in $\cal C$ that has no 1-cutset and no proper
  1-join.  Let $(R, T)$ be an admissible pair of~$G$.  Then $G$ admits
  a strong third color $S$ such that $T \subseteq S$ and $S \cap R =
  \emptyset$.  Furthermore, $S$ is obtained in time ${\cal O}(1)$ from
  well chosen strong third colors of blocks of $G$ w.r.t.\ a proper
  2-cutset of~$G$.
\end{lemma}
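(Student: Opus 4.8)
The plan is to decompose $G$ along a proper $2$-cutset and glue together strong third colors of the two blocks. Since $G$ is non-basic and, by hypothesis, connected, triangle-free, square-free and Petersen-free with no $1$-cutset and no proper $1$-join, Theorem~\ref{th:nosquare} provides a proper $2$-cutset $\{a,b\}$ with split $(X,Y,a,b)$; let $G_X,G_Y$ be the blocks, with marker $c$ of degree two adjacent exactly to $a$ and $b$. By Lemma~\ref{l0} each block again satisfies all of these hypotheses, so each block is either basic—and then, being Petersen-free, admits the required strong third colors by Lemma~\ref{th:tcbasic}—or falls under Lemma~\ref{th:tcSP} itself, applied inductively since the blocks have strictly fewer nodes. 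Thus for any admissible pair I prescribe on a block I may assume a strong third color realizing it.

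First I would settle the gluing. Given strong third colors $S_X$ of $G_X$ and $S_Y$ of $G_Y$ that agree on $\{a,b\}$, I set $S=(S_X\cup S_Y)\sm\{c\}$. Stability is immediate because $\{a,b\}$ separates $X$ from $Y$, and every cycle of $G$ lying inside one block is met by $S_X$ or $S_Y$ at a real node of that cycle. The only delicate cycles are the \emph{mixed} ones, which must pass through both $a$ and $b$; such a cycle $C$ splits as $P_X\cup P_Y$, and $P_Y\cup\{c\}$ is a cycle of $G_Y$, so if $c\notin S_Y$ then $S_Y$ meets $P_Y\subseteq C$ and $C$ is covered. Hence it suffices that \emph{one} marker be excluded; in the single situation where this cannot be arranged I will instead have $N(a)\subseteq S$, and then every mixed cycle, passing through $a$, already meets $S$ at a neighbour of $a$.

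The key tool is that the admissible pairs based at the marker $c$ (degree two, $N(c)=\{a,b\}$, with $ab\notin E(G)$) realize every joint state of $(a,b)$—both in $S$, both out, or exactly one in—and, through the degree-two types, additionally forbid all of $N[a]$ or all of $N[b]$, while in every case forcing $c\notin S$. I would therefore solve the block carrying $v$ with the pair encoding $(R,T)$: for $v$ interior to a side this is the pair $(R,T)$ itself read in that block, and for $v\in\{a,b\}$ it is the type~1 or type~2 pair at $a$. I then read the state of $a,b$ off the resulting color and solve the other block with the matching $c$-based pair, which excludes that block's marker and so covers the mixed cycles. When $(R,T)$ reaches across the cut—the cases $R\supseteq N[a]$ or $R\supseteq N[b]$, and the case $v\in\{a,b\}$ of type~2—the cross-side portion of $R$ is exactly $N[a]\cap Y$ or $N[b]\cap Y$, which the $c$-based types~4 and~5 forbid on the far side; the near side is handled simultaneously, since its own marker lies in $N[a]$ (resp.\ $N[b]$) and is thus excluded too.

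The hard part is compatibility across the cut, and it concentrates in two places. First, the case $v\in\{a,b\}$ with $T=N(v)$: forcing all of $N(a)$ into $S$ requires the type~1 pair at $a$ on \emph{both} blocks, putting both markers into $S$; here the marker mechanism fails and I fall back on $N(a)\subseteq S$ to meet every mixed cycle. Second, the shared marker: when some real node $c$ already has $N(c)=\{a,b\}$, it is common to both blocks and must be matched as well. I keep $c\notin S$—any state placing $a$ or $b$ in $S$, and every $N[a]$-, $N[b]$- or $c$-based pair, excludes it—and, by choosing the $2$-cutset so that this $c$ lies on the side opposite $v$ (the case $v=c$ being itself a $c$-based pair, treated directly), I keep mixed-cycle coverage on that side independent of the other block's marker. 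Finally, everything after the two recursive calls—choosing the pairs, reading the state of $a,b$, and gluing—is ${\cal O}(1)$, which yields the stated complexity.
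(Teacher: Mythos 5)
Your overall architecture is the paper's: decompose along a proper 2-cutset given by Theorem~\ref{th:nosquare}, recurse on the blocks (which keep all hypotheses by Lemma~\ref{l0}, landing in Lemma~\ref{th:tcbasic} when basic), and synchronize the two strong third colors across the cut by prescribing, on the block not containing $v$, an admissible pair based at the degree-two marker $c$ that encodes the state of $\{a,b\}$ and excludes $c$. Your case analysis for $v\in\{a,b\}$ with $T=N(v)$, for $R\supseteq N[a]$, and for the generic case $T\cup R\subseteq X\cup\{a,b\}$ matches the paper's Cases 1--3.

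The gap is in your treatment of the shared real marker. Recall that with the blocks of Section~\ref{sec:decomp}, if $G$ has a node $c$ with $N(c)=\{a,b\}$ then that $c$ is the marker of both blocks and is a genuine node of $G$. Your blanket rule ``I keep $c\notin S$'' is not something you can enforce: in the generic case where $v$ lies deep inside $X$ and the pair $(R,T)$ mentions none of $a,b,c$, the recursively obtained strong third color $S_X$ of $G_X$ may well contain $c$ (nothing in the induction hypothesis lets you forbid this without imposing an extra constraint, and $(R\cup\{c\},T)$ is not an admissible pair). If $c\in S_X$ and you then delete $c$ from $S$ while giving $G_Y$ the $c$-based pair with $a,b,c\notin S_Y$, a real cycle of $G$ of the form $a\,c\,b\,P$ with $P\subseteq X$ that $S_X$ met \emph{only} at $c$ is now met by neither $S_X\setminus\{c\}$ nor $S_Y$, so $S$ fails to be a strong third color. (Your first ``hard spot'' already concedes $c\in S$ when $T=N(a)$ and $c$ is real, since then $c\in T$; so the rule $c\notin S$ is also internally inconsistent.) The paper resolves exactly this situation by switching the far-side pair: when $c\in S_X$ and $c$ is real it gives $G_Y$ the pair $(\{a\},N_{G_Y}(a))$ based at $a$, which forces $c\in S_Y$ as well and covers every mixed cycle through a neighbour of $a$. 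Alternatively one can keep $c\in S$ taken from $S_X$ alone while $S_Y$ avoids $\{a,b,c\}$ --- stability and coverage both survive --- but that is the opposite of what you wrote, and either way the case needs to be stated and argued explicitly rather than excluded by fiat. Also note that ``choosing the $2$-cutset so that $c$ lies on the side opposite $v$'' is not a free choice (the location of $c$ relative to $v$ is determined by $G$) and does not remove the problem, since the cycle $a\,c\,b\,P$ above lives in the block that contains $v$.
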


\begin{proof}
  By Theorem~\ref{th:nosquare}, $G$ has a proper 2-cutset.  Let $(X,
  Y, a, b)$ be a split of a proper 2-cutset of $G$. Let $v$ be a node
  of $G$ and $(R, T)$ an admissible pair with respect to $G, v$.  We
  now show that $G$ admits a strong third color $S$ such that $T
  \subseteq S$ and $S \cap R =\emptyset$.  We use induction on the
  blocks of decomposition $G_X$ and $G_Y$ w.r.t.\ this proper
  2-cutset, as defined in Section~\ref{sec:decomp}.  Note that by
  Lemma~\ref{l0}, $G_X$ and $G_Y$ are connected, triangle-free,
  square-free, Pertersen-free, contain no 1-cutset and no proper
  1-join.

  Here below, when we write ``by induction'', we mean that either we
  use inductively Lemma~\ref{th:tcSP} for a smaller graph (when this
  smaller graph is not basic), or that we use Lemma~\ref{th:tcbasic}
  (when this smaller graph is basic).  By symmetry it is enough to
  consider the following three cases.

  \noindent{\bf Case 1:} $v = a$. 

  Since $a$ is not of degree two, either $T = N(a)$ and $R = \{a\}$,
  or $T = \emptyset$ and $R = N[a]$.

  Suppose that $T = N(a)$. By induction there is a strong third color
  $S_X$ of $G_X$ (resp. $S_Y$ of $G_Y$) such that ${N_{G_X}}(a)
  \subseteq S_X$ (resp. ${N_{G_Y}}(a) \subseteq S_Y$). So marker node
  $c \in S_X \cap S_Y$, and hence neither $a$ nor $b$ belongs to $S_X
  \cup S_Y$.  Therefore $S=S_X \cup S_Y$ with possibly $c$ removed if
  $c$ is not a real node of $G$, is a stable set of $G$ such that $T
  \subseteq S$ and $S \cap R=\emptyset$ (since $R=\{ a\}$).  Let $H$
  be a cycle of $G$. If $H$ contains $a$, then it must contain a node
  of $N(a) = T$, and hence it contains a node of $S$. So assume that
  $H$ does not contain $a$.  Since $H$ does not contain $a$,
  w.l.o.g.\ $V(H) \subseteq X \cup \{ b \}$ and does not contain $c$.
  Hence $H$ is a cycle of $G_X$ that does not contain~$c$.  Since
  $S_X$ is a strong third color of $G_X$, a node of $H$ belongs to
  $S_X \setminus \{ c\}$, and hence to~$S$.  Therefore $S$ is a strong
  third color of $G$ such that $T \subseteq S$ and $S \cap
  R=\emptyset$.

  Now suppose that $R = N[a]$. Note that since $c$ is of degree two in
  $G_X$, $(N_{G_X} [a],\{ b \} )$ is an admissible pair
  w.r.t. $G_X,c$, and hence by induction, there exists a strong third
  color $S_X$ of $G_X$ such that $N_{G_X} [a] \cap S_X=\emptyset$ (in
  particular, $c \not\in S_X$) and $b \in S_X$. Similarly, there
  exists a strong third color $S_Y$ of $G_Y$ such that $N_{G_Y} [a]
  \cap S_Y=\emptyset$ (in particular, $c \not\in S_Y$) and $b \in
  S_Y$.  Clearly $S = S_X \cup S_Y$ is a stable set such that
  $\emptyset = T \subseteq S$ and $S \cap R =\emptyset$.  Let us check
  that every cycle of $G$ contains a node of $S$. Let $H$ be a cycle
  of $G$. If $H$ contains $b$ then we are done since $b\in S$, so
  w.l.o.g.\ $V(H) \subseteq X \cup \{ a\}$, i.e. $H$ is a cycle of
  $G_X$, and hence, since $S_X$ is a strong third color of $G_X$,
  $S_X$ contains a node of $H$, and so does $S$.

  \noindent{\bf Case 2:} $v$ is of degree two, $N(v) = \{u, w\}$, $v
  \in X$, $w = a$ and $N[w] \subseteq R$.

  Note that either $u = b$ or $u \in X$, and either $u \in R$ or $u
  \in T$.  By induction there exists a strong third color $S_X$ of
  $G_X$ such that $N_{G_X}[w] \cap S_X = \emptyset$ and $u \in S_X$ if
  and only if $u \in T$. By induction, since $c$ is of degree~2 (and
  hence both $(N_{G_Y} [w],\{ b \})$ and $(N_{G_Y} [w] \cup \{ b \},
  \emptyset )$ are admissible w.r.t. $G_Y,c$), there exists a strong
  third color $S_Y$ of $G_Y$ such that $N_{G_Y}[w] \cap S_Y
  =\emptyset$ and $b\in S_Y$ if and only if $b\in S_X$. Clearly $S =
  S_X\cup S_Y$ is a stable set of $G$ such that $T \subseteq S$ and $R
  \cap S = \emptyset$. Since $c \notin S_X$ and $c\notin S_Y$, it is
  easy to see that $S$ contains a node of every cycle of $G$, i.e. $S$
  is a strong third color of $G$.

  \noindent{\bf Case 3:} $T \cup R \subseteq X \cup \{ a,b\}$. 
    
  By induction there exists a strong third color $S_X$ of $G_X$ such
  that $T \subseteq S_X$ and $R \cap S_X = \emptyset$. If $c \in S_X$
  and $c$ is a real node of $G$, then let $T_Y=N_{G_Y}(a)$ and $R_Y=\{
  a \}$. Note that $(R_Y,T_Y)$ is an admissible pair
  w.r.t.\ $G_Y,a$. In all other cases, let $T_Y=S_X \cap \{ a,b \}$
  and $R_Y=\{ c\} \cup (\{ a,b \} \setminus S_X)$.  Note that
  $(R_Y,T_Y)$ is an admissible pair w.r.t.\ $G_Y,c$.  By induction
  there exists a strong third color $S_Y$ of $G_Y$ such that $T_Y
  \subseteq S_Y$ and $R_Y \cap S_Y = \emptyset$. Note that $S_X \cap
  \{ a,b \} = S_Y \cap \{ a,b \}$. Furthermore, if $c \in S_X$ and $c$
  is a real node of $G$, then $c \in S_Y$, and in all other cases $c
  \not\in S_Y$ because $c\in R_Y$.  If $c \in S_X$ and $c$ is a real
  node of $G$, then let $S=S_X \cup S_Y$, and otherwise let $S=(S_X
  \cup S_Y) \setminus \{ c \}$. Clearly $S$ is a stable set of $G$
  such that $T \subseteq S$ and $R \cap S =\emptyset$.

  Let $H$ be a cycle of $G$. We now show that $S$ contains a node of
  $H$.  If $H$ is a cycle of $G_X$ (resp. $G_Y$) then $S_X$
  (resp. $S_Y$) contains a node of $H$, and hence so does $S$. So we
  may assume that $H$ is not a cycle of $G_X$ nor $G_Y$. In particular
  $H$ contains both $a$ and $b$, a node of $X$ and a node of $Y$.  Let
  $H_Y$ be the $ab$-subpath of $H$ whose intermediate nodes belong to
  $Y$. Note that $H_Y \neq acb$, since otherwise $H$ belongs to
  $G_X$. So $V(H_Y) \cup \{ c \}$ induces a cycle of $G_Y$.  Since
  $S_Y$ is a strong third color of $G_Y$, it contains a node $h$ of
  $V(H_Y) \cup \{ c \}$. If $h \neq c$ then $h \in S \cap V(H)$. So
  assume that $h=c$. But then $c\notin R_Y$ so $c \in S_Y$, and hence
  $c \in S_X$ and it is a real node of $G$. Therefore, $c \in S \cap
  V(H)$.
\end{proof}

Lemma~\ref{th:tcSP} implies a weaker statement: the existence of a
third color (possibly not strong). But we do not know how to prove
this weaker result with the weaker induction hypothesis. An attempt
fails at the proof of Case~3.

\begin{lemma}
  \label{th:thirdcolor}
  Let $G$ be a non-basic connected triangle-free graph in $\cal C$ and
  $(R, T)$ be an admissible pair of~$G$.  Then $G$ admits a third
  color $S$ such that $T \subseteq S$ and $S \cap R = \emptyset$.
  Furthermore, $S$ is obtained in time ${\cal O}(1)$ from well chosen
  third colors of blocks of $G$ w.r.t.\ a 1-cutset, a proper 1-join or
  a proper 2-cutset of~$G$.
\end{lemma}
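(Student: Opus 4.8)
The plan is to induct on $|V(G)|$, reducing the general triangle-free case to the three building blocks already in hand: Lemma~\ref{th:tcbasic} and Lemma~\ref{th:tcPetersen} for basic graphs, and Lemma~\ref{th:tcSP} for the square-free Petersen-free case. As in the proof of Lemma~\ref{th:tcSP}, ``by induction'' will mean: apply Lemma~\ref{th:thirdcolor} itself to a smaller non-basic block, or one of Lemmas~\ref{th:tcbasic} and~\ref{th:tcPetersen} to a basic block. Since $G$ is non-basic, Theorem~\ref{th:1} guarantees that $G$ has a 1-cutset, a proper 1-join, or a proper 2-cutset. First I would dispose of the case where $G$ has neither a 1-cutset nor a proper 1-join: then Theorem~\ref{th:square} forbids $G$ from containing a square, the Petersen graph or the Heawood graph (as $G$ is non-basic it is itself neither of the latter two), so $G$ is connected, triangle-free, square-free and Petersen-free with no 1-cutset and no proper 1-join, and Lemma~\ref{th:tcSP} directly produces a strong third color meeting the constraints, which is in particular the desired ordinary third color. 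It remains to treat a 1-cutset or a proper 1-join, where I decompose into blocks $G_X,G_Y$; by Lemma~\ref{l:Htriangle} and Lemma~\ref{iff0} these are connected, triangle-free, in $\mathcal C$ and strictly smaller, so induction applies, and the task is to glue block third colors $S_X,S_Y$ into a third color $S$ of $G$ with $T\subseteq S$ and $S\cap R=\emptyset$.

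For a 1-cutset with cutnode $v$, no cycle of $G$ crosses the cut, so any stable union of block third colors that agree on $v$ is automatically a third color of $G$. The only work is to match $v$: I place the prescribed pair $(R,T)$ on the block containing its center $v_0$, solve there by induction, and then choose an admissible pair for the other block forcing $v$ into the matching state --- using $(\{v\},N(v))$ or $(N[v],\emptyset)$ to keep $v$, and (when $v\in S_X$) its neighbours across the cut, out of $S_Y$, so that stability and $S\cap R=\emptyset$ persist. The subcase $v_0=v$ is handled by splitting the prescribed neighbourhood of $v$ between the two blocks.

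The proper 1-join is the crux. Writing $(X,Y,A,B)$ for the split, with marker $y_m$ complete to $A$ in $G_X$ and marker $x_m$ complete to $B$ in $G_Y$, the key structural fact is that $A,B$ are stable and completely joined, so \emph{every} cycle crossing the join contains a node of $A$ and a node of $B$; moreover any crossing odd cycle with at least two nodes on each side has length at least five and hence a genuine $A$--$B$ chord $a_ib_j$, along which it strictly contains a shorter odd cycle, so by a secondary induction on cycle length the coverage problem reduces to crossing cycles with exactly one node on one side. Such a cycle is length-preservingly mirrored, via the relevant marker, to an odd cycle of a block. I would then exploit that each marker is complete to its side to force, by a \emph{single} admissible pair on the free block, either $B\subseteq S_Y$ (pair $(\{x_m\},B)$) or $B\cap S_Y=\emptyset$ (pair $(N[x_m],\emptyset)$): choosing the first when $A\cap S_X=\emptyset$ and the second otherwise. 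In the first case $S$ is stable and every crossing cycle is hit at a join node lying in $S$; in the second case the forced exclusion $x_m\notin S_Y$, together with the automatic $y_m\notin S_X$ that $A\cap S_X\neq\emptyset$ entails, makes the mirrored block cycles be covered at real nodes. Deleting the markers from $S_X\cup S_Y$ then yields the third color $S$.

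I expect the main obstacle to be precisely this proper-1-join gluing, and in particular the bookkeeping when the constraint center $v_0$ is itself a join node, so that $(R,T)$ straddles $A$ and $B$: then the forcing admissible pair on the ``free'' side and the constraint-carrying pair on the other side must be chosen simultaneously and consistently, and one must recheck after the marker deletions that $T\subseteq S$ and $S\cap R=\emptyset$ survive. Two further points need care: verifying that the cycle-length induction for crossing cycles is legitimate (every odd crossing cycle with $\geq 2$ nodes per side genuinely admits an $A$--$B$ chord, since it avoids triangles and the square), and confirming that adding or deleting a marker never destroys the third-color property of a block. By contrast, stability and the constraint $(R,T)$ are comparatively routine to track once the correct admissible pairs have been selected on each block.
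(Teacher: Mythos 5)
Your overall architecture is the same as the paper's: reduce to Lemma~\ref{th:tcSP} when there is no 1-cutset and no proper 1-join (via Theorem~\ref{th:square}), glue across a 1-cutset by forcing agreement at the cut node, and handle the proper 1-join by solving the constraint-carrying block $G_X$ first and then forcing $B$ entirely into or entirely out of $S_Y$ via one of two admissible pairs at the marker $x_m$. Your secondary induction on the length of crossing odd cycles (cutting along an $A$--$B$ chord) is a legitimate rephrasing of the paper's ``take $H$ minimal'' argument, and the 1-cutset case is handled as in the paper.

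There is, however, a concrete flaw in the switching rule for the 1-join. You choose the pair $(\{x_m\},B)$ (forcing $B\subseteq S_Y$) exactly when $A\cap S_X=\emptyset$. The correct criterion, and the one the paper uses, is whether the marker $y_m$ of $G_X$ belongs to $S_X$. These are not equivalent: $y_m\in S_X$ implies $A\cap S_X=\emptyset$ by stability, but not conversely. The discrepancy matters precisely in the straddling case you flag but do not resolve. Take $v\in A$ with $(R,T)=(N[v],\emptyset)$, so $R\cap Y=B$ and the translated constraint on $G_X$ is $R_X=N_{G_X}[v]\ni y_m$; induction then returns some $S_X$ with $y_m\notin S_X$, and nothing prevents $S_X$ from also missing all of $A$ (recall $|A|\geq 2$, and $S_X$ is merely required to avoid $N_{G_X}[v]$). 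Your rule then selects $B\subseteq S_Y$, hence $B\subseteq S$, violating $S\cap R=\emptyset$. Conditioning instead on $y_m\in S_X$ repairs everything: $T\cap Y=B$ forces $y_m\in T_X\subseteq S_X$ and hence $B\subseteq S$; $R\cap Y=B$ forces $y_m\in R_X$, hence $y_m\notin S_X$, hence $B\cap S=\emptyset$; stability still holds in both branches ($y_m\in S_X$ gives $A\cap S_X=\emptyset$ with $B\subseteq S_Y$, and $y_m\notin S_X$ gives $B\cap S_Y=\emptyset$); and in your mirrored-cycle coverage argument the bad event ``$S_X$ hits the mirrored cycle only at $y_m$'' still lands you in the branch where $B\subseteq S_Y$, so the original cycle is hit at its unique $B$-node. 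With that one change your proof goes through and coincides with the paper's.
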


\begin{proof}
  Here below, we use the fact that every strong third color is a third
  color with no explicit mention. So, we may assume that $G$ contains
  a square or the Petersen graph, or has a 1-cutset or a proper
  1-join, for otherwise the result follows from Lemma~\ref{th:tcSP}.
  Hence, by Theorem~\ref{th:square}, the proof follows from the
  following two claims.

  Here below, when we write ``by induction'', we mean that either we
  use inductively Lemma~\ref{th:thirdcolor} for a smaller graph (when
  this smaller graph is not basic), or that we use
  Lemma~\ref{th:tcbasic} or~\ref{th:tcPetersen} (when this smaller 
  graph is basic).
  
  \begin{claim}
    \label{c:1-cutset}
    The lemma holds when $G$ has a 1-cutset.
  \end{claim}

  \begin{proofclaim}
    Let $(X,Y,z)$ be a split of a 1-cutset of $G$. Let $G_X$ and $G_Y$
    be the blocks of decomposition w.r.t.\ this 1-cutset.  Note that
    $G_X$ and $G_Y$ are triangle-free by Lemma~\ref{l:Htriangle}.

    \noindent{\bf Case~1: } $X \cap (R \cup T)$ and $Y
    \cap (R \cup T)$ are both non-empty. 

    Then, $z\in R \cup T$. We put $R_X = R \cap (X\cup \{ z \} )$,
    $R_Y = R \cap (Y\cup \{ z \} )$, $T_X = T \cap (X\cup \{ z \} )$,
    $T_Y = T \cap (Y\cup \{ z \} )$.  We observe that $(R_X, T_X)$ and
    $(R_Y, T_Y)$ are admissible with respect to $G_X$ and $G_Y$
    respectively.  So by induction there exists a third color $S_X$ of
    $G_X$ such that $S_X \subseteq T_X$, $R_X \cap S_X = \emptyset$,
    and a third color $S_Y$ of $G_Y$ such that $T_Y \subseteq S_Y$,
    $R_Y \cap S_Y = \emptyset$. So, $S=S_X \cup S_Y$ is a third color
    of $G$ such that $T \subseteq S$ and $R \cap S = \emptyset$.

    \noindent{\bf Case~2: } One of $X \cap (R \cup T)$, $Y \cap (R
    \cup T)$ is empty.

    We assume w.l.o.g.\ that $Y \cap (R \cup T) = \emptyset$. Hence, $R
    \cup T \subseteq X \cup \{z\}$.  Let $S_X$ be a third color
    of $G_X$ such that $S_X \subseteq T$, $R \cap S_X = \emptyset$. If
    $z \in S_X$, let $S_Y$ be a third color of $G_Y$ such that
    $z \in S_Y$. Else, let $S_Y$ be a third color of $G_Y$ such
    that $z \notin S_Y$.  In either case, $S=S_X \cup S_Y$ is a third
    color of $G$ such that $S \subseteq T$ and $R \cap S = \emptyset$.
  \end{proofclaim}

  \begin{claim}\label{1join:2}
    The lemma holds when $G$ has a proper 1-join.
  \end{claim}

  \begin{proofclaim}
    Let $(X, Y, A, B)$ be a split of a proper 1-join of $G$.  We show that
    $G$ admits a third color $S$ such that $T \subseteq S$ and $S \cap
    R = \emptyset$.

    Suppose that $v$ is of degree at least three. Then we assume
    w.l.o.g.\ $v\in X$.  So $v \in V(G_X)$, $T \cap Y=B$ or
    $\emptyset$, and $R \cap Y = B$ or $\emptyset$.

    Suppose that $v$ is of degree~2 and $N(v) = \{u, w\}$.  If $\{u,
    v, w\}$ is contained in $X$ or $Y$, then we assume w.l.o.g.\ that
    it is contained in $X$.  Otherwise, $v$ must be contained in $A
    \cup B$, and we assume w.l.o.g.\  that $v \in B$, which implies
    that $A=\{ u,w\}$ (since $|A| \geq 2$).  If $v \in B$, we assume
    that the marker $y$ of block $G_X$ is $v$.

    So in all cases $v \in V(G_X)$, $T \cap Y=B$ or $\emptyset$, $R
    \cap Y=B$ or $\emptyset$, and if $N(v)=\{ u,w\}$ then $v \not \in
    A$ and $u,w \in X$.  If $T\cap Y=B$ then let $T_X=(T \setminus B)
    \cup \{ y \}$, and if $T \cap Y=\emptyset$ then let $T_X=T$.  If
    $R\cap Y=B$ then let $R_X=(R \setminus B) \cup \{ y \}$, and if $R
    \cap Y=\emptyset$ then let $R_X=R$.  Note that $(R_X,T_X)$ is an
    admissible pair w.r.t.\ $G_X,v$.  By induction, there exists a
    third color $S_X$ of $G_X$ such that $T_X \subseteq S_X$ and $R_X
    \cap S_X=\emptyset$.  By induction, there exists a third color
    $S_Y'$ of $G_Y$ such that $N(x)=B \subseteq S_Y'$, and a third
    color $S_Y''$ of $G_Y$ such that $S_Y'' \cap N[x]=\emptyset$. If
    $y \in S_x$ then let $S_Y=S_Y'$, and otherwise let
    $S_Y=S_Y''$. Note that $x \not \in S_Y$, i.e. $S_Y \subseteq
    Y$. Let $S=(S_X \cap X) \cup S_Y$.  Note that only one of $S'_Y,
    S''_Y$ needs to  be computed once $S_X$ is known. 

    Clearly $S$ is a stable set.  If $T \cap Y=\emptyset$ then
    $T_X=T$, and hence, since $T_X \subseteq S_X$, $T \subseteq S$.
    If $T \cap Y=B$ then $y \in T_X$, and hence, since $T_X \subseteq
    S_X$ (and in particular $y\in S_X$), $B \subseteq S$, and
    therefore $T \subseteq S$.  If $R \cap Y=\emptyset$ then $R_X=R$,
    and hence, since $R_X \cap S_X =\emptyset$, $R \cap S=\emptyset$.
    If $R \cap Y=B$ then $R_X=(R\setminus B) \cup \{ y \}$, and hence,
    since $R_X \cap S_X =\emptyset$, $y \not \in S_X$ and so $S_Y \cap
    N[x] =\emptyset$, implying that $R \cap S=\emptyset$.

    So it only remains to show that $S$ contains a node of every odd
    cycle of $G$. Let $H$ be an odd cycle of $G$.  If $V(H) \subseteq
    X$, then since $S_X$ is a third color of $G_X$, $S_X$ contains a
    node of $H$, and hence so does $S$.  If $V(H) \subseteq Y$, then
    since $S_Y$ is a third color of $G_Y$, $S_Y$ contains a node of
    $H$, and hence so does $S$.  So we may assume that $H$ contains
    both a node of $X$ and a node of $Y$.  Hence, $H$ is node-wise
    partitioned into a path of $X$ and a path of $Y$ of different
    parity. Hence if we suppose that $H$ is minimal with respect to
    the property of overlapping $X, Y$ and being odd, then either
    $V(H) \cap X=\{ h_X\} \subseteq A$ or $V(H) \cap Y=\{ h_Y\}
    \subseteq B$.  Suppose that $V(H) \cap X=\{ h_X\}$. Then $(V(H)
    \setminus \{ h_X\} ) \cup \{ x\}$ induces an odd cycle $H'$ of
    $G_Y$. Since $S_Y$ is a third color of $G_Y$, $S_Y$ contains a
    node $h$ of $H'$. Since $x \not \in S_Y$, $h$ is a node of $V(H)
    \cap S$. Finally assume that $V(H) \cap Y=\{ h_Y\}$. Then $(V(H)
    \setminus \{ h_Y\} ) \cup \{ y\}$ induces an odd cycle $H'$ of
    $G_X$. Since $S_X$ is a third color of $G_X$, $S_X$ contains a
    node $h$ of $H'$. If $h \neq y$ then $h$ is a node of $V(H) \cap
    S$. So assume $h=y$.  Then $y \in S_X$ and hence $B \subseteq
    S_Y$, and in particular $h_Y \in V(H) \cap S$.
  \end{proofclaim}
\end{proof}

Our proof of Lemmas~\ref{th:tcSP}, \ref{th:thirdcolor} suggests that
for every triangle-free graph in $\cal C$ there might exist a stable
set that intersects every cycle. Such a property might be of use for
stronger notions of coloring (list coloring, \dots). It holds for
every basic graph (even for the Petersen graph), for every
square-and-Petersen-free graph by a slight variant of
Lemma~\ref{th:tcSP} and we almost proved it in general. But it is
false. Let us build a counter-example $G$, obtained from four disjoint
copies $\Pi_1, \dots, \Pi_4$ of the Petersen graph minus one node. So
$\Pi_i$ contains a set~$X_i$ of three nodes of degree two ($i= 1,
\dots, 4$). We add all edges between $X_1, X_2$, between $X_2, X_3$,
between $X_3, X_4$ and between $X_4, X_1$.  Note that $G$ can be
obtained by gluing one square $S=s_1s_2s_3s_4$ and four disjoint
copies $\Pi_1, \Pi_2, \Pi_3, \Pi_4$ of the Petersen graph along
Operation~${\cal O}_2$ of Theorem~\ref{th:structure} as follows: let
$G=S$ and for $i=1$ to $i=4$, replace $G$ by itself glued through
$s_i$ with $\Pi_i$.  So $G\in {\cal C}$ by Theorem~\ref{th:structure}.

We claim now that $G$ does not contain a stable set that intersects
every cycle. Indeed, if $S$ is such a stable set then $S$ must contain
all nodes in one of the $X_i$'s for otherwise we build a $C_4$ of
$G\sm S$ by choosing a node in every~$X_i$. So $X_1 \subseteq S$
say. We suppose that $\Pi_1$ has nodes $\{a_2, \dots, a_5, b_1, \dots,
b_5\}$ with our usual notation. So $X_1 = \{b_1, a_2, a_5\} \subseteq
S$ and we observe that every node in $C = V(\Pi_1\sm S)$ has a
neighbor in $X_1$. Hence $C \cap S = \emptyset$ while $G[C]$ is a
cycle on six nodes, a contradiction.  Note that for any node $v$ of
$G$, $G\sm v$ contains a stable set that intersects every cycle.  So,
a characterisation by forbidding induced subgraphs of the class of
graphs that admit a stable set intersecting every cycle needs to
consider $G$ somehow.  For this reason, we believe that such a
characterisation must be complicated.

\begin{theorem}
  If $G \in {\cal C}$, then either $\chi(G) = \omega(G)$ or
  $\chi(G) \leq 3$. In particular, $\chi(G) \leq \omega(G) + 1$.   
\end{theorem}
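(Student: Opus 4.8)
The plan is to induct on $|V(G)|$ and split into three cases determined by Theorem~\ref{th:triangle} and the third-color lemmas. The guiding observation is that the dichotomy ``$\chi(G)=\omega(G)$ or $\chi(G)\le 3$'' behaves well under maxima. If $G$ is disconnected with components $G_1,\dots,G_k$, then $\chi(G)=\max_i\chi(G_i)$ and $\omega(G)=\max_i\omega(G_i)$, so I would apply the induction hypothesis to each $G_i$ and split on $\omega(G)$: if $\omega(G)\ge 4$, the component attaining the maximum clique satisfies $\chi=\omega$ (since $\chi\ge\omega\ge 4$ rules out $\chi\le 3$) while every other component contributes at most $\max(\omega(G_i),3)=\omega(G)$, giving $\chi(G)=\omega(G)$; if $\omega(G)\le 3$, every component is $3$-colorable and so is $G$. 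This reduces everything to connected $G$.

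For connected triangle-free $G$ we have $\omega(G)\le 2$ and only need $\chi(G)\le 3$. This is immediate from the third-color machinery: choosing any vertex $v$ and the admissible pair $T=N(v)$, $R=\{v\}$, Lemma~\ref{th:tcbasic} or~\ref{th:tcPetersen} (if $G$ is basic) or Lemma~\ref{th:thirdcolor} (if $G$ is not basic) yields a third color $S$, i.e.\ a stable set meeting every odd cycle; coloring $S$ with a third color and $2$-coloring the bipartite graph $G\setminus S$ produces a proper $3$-coloring.

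For connected $G$ containing a triangle, Theorem~\ref{th:triangle} gives two possibilities. If $G$ is a clique then $\chi(G)=\omega(G)$ trivially. Otherwise some vertex $v$ is a $1$-cutset with split $(X,Y,v)$, and I would pass to the blocks $G_X=G[X\cup\{v\}]$ and $G_Y=G[Y\cup\{v\}]$; these are induced subgraphs of $G$, hence lie in ${\cal C}$ and are strictly smaller, so the induction hypothesis applies. The two gluing facts I would establish are $\chi(G)=\max(\chi(G_X),\chi(G_Y))$ (relabel colors so that $v$ receives the same color in both blocks and merge, which is legitimate because $v$ is the only common vertex and there are no edges between $X$ and $Y$) and $\omega(G)=\max(\omega(G_X),\omega(G_Y))$ (any clique is connected and, after deleting $v$, remains within a single component, hence lies in one block). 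Since $G$ has a triangle, $\omega(G)\ge 3$, and the same case split as above finishes the argument: if $\omega(G)\ge 4$ the block with the largest clique forces $\chi=\omega$ and the other block contributes at most $\omega(G)$; if $\omega(G)=3$ both blocks are $3$-colorable, whence $3=\omega(G)\le\chi(G)\le 3$ and again $\chi(G)=\omega(G)$.

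The final assertion $\chi(G)\le\omega(G)+1$ then drops out: if $\chi(G)=\omega(G)$ it is clear, and if $\chi(G)\le 3$ then $G$ either is edgeless (so $\chi(G)\le 1\le\omega(G)+1$) or has an edge, forcing $\omega(G)\ge 2$ and hence $\chi(G)\le 3\le\omega(G)+1$. I expect the only genuine difficulty to be the bookkeeping in the gluing step: propagating a \emph{disjunctive} invariant through a $1$-cutset and across components. This is exactly why I would make the split on $\omega(G)\ge 4$ versus $\omega(G)\le 3$ explicit, rather than trying to combine optimal colorings of the blocks directly.
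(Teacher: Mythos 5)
Your proof is correct and takes essentially the same route as the paper: the connected triangle-free case is settled by a third color obtained from Lemmas~\ref{th:tcbasic}, \ref{th:tcPetersen} and~\ref{th:thirdcolor}, and the case $\omega(G)\geq 3$ by Theorem~\ref{th:triangle}. The paper merely compresses your induction through 1-cutsets into the single observation that every 2-connected component is either a clique or 3-colorable, which yields $\chi(G)=\omega(G)$ when $\omega(G)\geq 3$.
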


\begin{proof}
  Clearly we may assume that $G$ is connected.  If $\omega(G) \leq 2$
  then $\chi(G) \leq 3$ since $G$ contains a third color by
  Lemma~\ref{th:tcbasic}, \ref{th:tcPetersen} or~\ref{th:thirdcolor}
  (indeed, every non-empty graph has an admissible pair:
  $(N[v],\emptyset )$).  If $\omega(G) \geq 3$ then by
  Theorem~\ref{th:triangle}, $G$ admits a 1-cutset. So every
  2-connected component of $G$ is either a clique or is
  3-colorable. Hence $\chi(G) = \omega(G)$.
\end{proof}

\begin{theorem}
  There exists an algorithm that computes an optimal coloring of any
  graph in $\cal C$ in time ${\cal O}(nm)$. 
\end{theorem}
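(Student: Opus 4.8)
The plan is to color each connected component of $G$ separately, so assume $G$ is connected, and to reduce the whole problem to $3$-coloring triangle-free graphs, for which the third-color machinery of Lemmas~\ref{th:tcbasic}--\ref{th:thirdcolor} is available. First I would compute the block-cut tree of $G$, that is, its decomposition into maximal $2$-connected components together with its bridges; this takes time ${\cal O}(n+m)$ by the algorithm of Hopcroft and Tarjan~\cite{hopcroft.tarjan:447,tarjan:dfs}. Each block $B$ is an induced subgraph of $G$, hence lies in ${\cal C}$, is connected, and has no $1$-cutset. By Theorem~\ref{th:triangle}, if $B$ contains a triangle then $B$ is a clique, since the other alternative (a $1$-cutset) is excluded by $2$-connectivity; otherwise $B$ is triangle-free. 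So every block is either a clique or a triangle-free graph in ${\cal C}$.

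Next I would color each block with exactly $\chi(B)$ colors. A clique is colored with $|V(B)|$ colors. For a triangle-free block $B$ I would first test bipartiteness in time ${\cal O}(n_B+m_B)$: if $B$ is bipartite it is $2$-colored directly, and otherwise $B$ contains an odd cycle, so $\chi(B)=3$ and I would $3$-color $B$ as in the proof of the chromatic bound. Concretely, build a proper decomposition tree $T_B$ of $B$ by Theorem~\ref{dtalg}, extract a third color $S$ of $B$ by Lemma~\ref{th:thirdcolor} (or directly by Lemma~\ref{th:tcbasic} or~\ref{th:tcPetersen} when $B$ is itself basic), starting from the admissible pair $(N[v],\emptyset)$ for an arbitrary node $v$; then assign color $3$ to $S$ and properly $2$-color the graph $B\setminus S$, which is bipartite because $S$ meets every odd cycle. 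Finally I would glue the block colorings along the block-cut tree: rooting it at an arbitrary block and proceeding outward, each newly treated block meets its already colored parent in a single cut vertex, and one permutes the colors of the new block so that this vertex keeps its color. This always succeeds, uses $\max_B \chi(B)=\chi(G)$ colors, and yields an optimal coloring by the preceding theorem.

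For the running time, the block-cut decomposition, the per-block bipartiteness tests, and the gluing all cost ${\cal O}(n+m)$ in total, so everything is dominated by the $3$-colorings of the non-bipartite triangle-free blocks. For such a block $B$ the extraction of the third color from $T_B$ is a single top-down pass: the clause ``in time ${\cal O}(1)$'' in Lemmas~\ref{th:tcSP} and~\ref{th:thirdcolor}, together with the observation there that only one of $S_Y',S_Y''$ ever needs to be computed, guarantees that each internal node of $T_B$ forwards exactly one admissible pair to each child and does ${\cal O}(1)$ combining work, while each basic leaf is handled in time linear in its own size. Since $T_B$ has ${\cal O}(n_B)$ nodes by Lemma~\ref{l:sizeT} and each decomposition introduces only ${\cal O}(1)$ marker nodes, the leaf sizes sum to ${\cal O}(n_B+m_B)$, so the extraction and the subsequent $2$-coloring both run in ${\cal O}(n_B+m_B)$; the cost of handling $B$ is therefore the ${\cal O}(n_B m_B)$ spent building $T_B$ by Theorem~\ref{dtalg}. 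As the blocks partition the edges of $G$ we have $\sum_B m_B=m$ and $n_B\le n$, whence $\sum_B n_B m_B \le n\sum_B m_B = nm$, which gives the overall bound ${\cal O}(nm)$. The main obstacle is exactly this time accounting, namely confirming that the recursive third-color extraction is a single pass with ${\cal O}(1)$ work per tree node rather than a branching recomputation, so that it never dominates the ${\cal O}(nm)$ already spent on building the decomposition trees.
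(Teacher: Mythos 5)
Your proposal is correct and follows essentially the same route as the paper: reduce via the $2$-connected components (Theorem~\ref{th:triangle}) to cliques and triangle-free blocks, build a proper decomposition tree by Theorem~\ref{dtalg}, extract a third color starting from the admissible pair $(N[v],\emptyset)$ via Lemmas~\ref{th:tcbasic}--\ref{th:thirdcolor}, and $2$-color the bipartite remainder. Your write-up is merely more explicit than the paper's about the block-cut-tree gluing and the $\sum_B n_B m_B \le nm$ accounting, which the paper compresses into a remark that the $1$-cutset case is easy.
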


\begin{proof}
  Let $G$ be a graph in ${\cal C}$.  When $G$ has a 1-cutset, then it
  is easy to obtain an optimal coloring of $G$ from optimal colorings
  of its blocks. So by Theorem~\ref{th:triangle} we may assume that
  our input graph is triangle-free. We may also assume that $G$ is
  connected and not bipartite.  Now we show how to 3-color $G$ by
  finding a third color of $G$.

  We first construct a proper decomposition tree $T_G$ of the input
  graph $G$ in time ${\cal O}(nm)$, by Theorem~\ref{dtalg}.  Let $v$
  be any node of $G$.  We associate with node $G$ of $T_G$ an
  admissible pair $(R,T)$, say $R=N[v]$ and $T=\emptyset$, and we use
  Lemmas \ref{th:tcbasic}, \ref{th:tcPetersen}, \ref{th:tcSP} and
  \ref{th:thirdcolor} to recursively find a third color $S$ of $G$
  such that $T \subseteq S$ and $S \cap R =\emptyset$.

  First note that all the leaves of $T_G$ are basic, and since $G$ is
  triangle-free, they are one of the graphs described in
  Lemma~\ref{th:tcbasic} or \ref{th:tcPetersen}.  So once the
  appropriate admissible pairs have been associated with a given leaf
  of $T_G$, Lemma~\ref{th:tcbasic} or \ref{th:tcPetersen} shows how to
  find the appropriate third color (or if needed, strong third
  colors), each in linear time in the size of the leaf.  

  For a non-leaf node $H$ of type~1 of $T_G$,
  Lemma~\ref{th:thirdcolor} shows how to proceed (in linear time) to
  find a third color of $H$ by asking recursively for appropriately
  chosen third colors of its children (i.e. choosing appropriate
  admissible pairs to associate with its children, and once the
  appropriate third colors of its children are found how to put them
  together to find the desired third color of $H$).  Note that in
  several cases, the algorithm has to compute the third colors of the
  children $H_1, H_2$ of $H$ in a prescribed order, that is wait for
  the answer for the coloring of $H_1$ before knowing what admissible
  pair is needed for the coloring of $H_2$.

  For a non-leaf node $H$ of type~2 of $T_G$, to recursively find a
  third color of $H$, we actually need to find a strong third color.
  This we can do by proceeding as in the proofs of Lemma
  \ref{th:tcSP}.  Note that since $H$ is of type~2, every non-leaf
  descendant of $H$ is of type~2. Also, no descendant of $H$ contains
  the Petersen graph, so all leaves under $H$ will have a strong third
  color computed by Lemma~\ref{th:tcbasic}.

  So the processing time at each non-leaf node of $T_G$ is ${\cal O}
  (1)$. Since by Lemma~\ref{l:sizeT} the size of $T_G$ is ${\cal O}
  (n)$, the sum of processing times at the leaves of $T_G$ is ${\cal
    O} (n+m)$.  So the time needed to process the tree is ${\cal O}(n+m)$.

  Hence, the total computation time is ${\cal O} (nm)$.
\end{proof}

We note that the algorithm above has complexity ${\cal O}(n+m)$ once
the proper decomposition tree is given.  So, if one can find an ${\cal
  O}(n+m)$-time algorithm for constructing a proper decomposition
tree, then one gets an ${\cal O}(n+m)$-time coloring algorithm for
graphs in $\cal C$.

\section{Cliques and stable sets}
\label{s:cliquestable}

The problems of finding a maximum clique and a maximum stable set for
a graph in $\cal C$ have a complexity that is easy to establish.  Note
first that any graph in $\cal C$ contains no diamond.  So any edge of
a graph in $\cal C$ is contained in a unique maximal clique.  Hence,
to find a maximum clique in polynomial time, it is enough to look for
the common neighborhood of every edge.  A simple implementation of
this leads to an ${\cal O}(nm)$-time algorithm and no faster algorithm
seems to be known for diamond-free graphs.  Here below we give a
linear time algorithm restricted to our class ${\cal C}$.  Note in
contrast that, as proved by Maffray and Preissmann, the coloring
problem is NP-hard even when restricted to triangle-free
graphs~\cite{maffray.preiss:triangle}.

\begin{theorem}
  There exists a linear time algorithm whose input is a graph in $\cal
  C$ and whose output is a maximum clique of $G$.
\end{theorem}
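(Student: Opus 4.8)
The plan is to exploit the fact (already noted just before the statement) that graphs in $\cal C$ are diamond-free, together with the local structure around triangles given by Theorem~\ref{th:triangle}. The central observation is that every \emph{block} of a graph $G\in{\cal C}$ (a maximal 2-connected subgraph, or a bridge, or an isolated node) is either a clique or is triangle-free. Indeed, if $B$ is a block with at least three nodes, then $G[V(B)]=B$: any edge of $G$ joining two nodes of $B$ would keep $B$ 2-connected, so by maximality it already lies in $B$. Hence $B$ is an induced subgraph of $G$ and therefore belongs to $\cal C$, it is connected, and being 2-connected it has no 1-cutset. Applying Theorem~\ref{th:triangle} to $B$, if $B$ contains a triangle then $B$ is a clique or some node of the maximal clique containing that triangle is a 1-cutset of $B$; since $B$ has no 1-cutset, $B$ must be a clique. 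So each block is a clique or triangle-free, and bridges and isolated nodes are trivially cliques.

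Next I would reduce the global problem to the blocks. Every clique of size at least two has all of its edges inside a single block (a clique on at least three nodes is 2-connected, and a single edge lies in exactly one block), so $\omega(G)=\max_B\omega(B)$ over all blocks $B$, and a maximum clique of $G$ is a maximum clique of some block. For a block that is a clique, $\omega(B)=|V(B)|$ and the clique is $V(B)$ itself; for a triangle-free block, $\omega(B)\leq 2$, realized by any edge of $B$ (or by a single node when $G$ is edgeless). Thus it suffices to determine, for each block, whether it is a clique, and to keep the largest clique found.

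The algorithm is therefore the following. First compute all blocks of $G$ in time ${\cal O}(n+m)$ using the classical depth-first-search procedure of Hopcroft and Tarjan~\cite{hopcroft.tarjan:447,tarjan:dfs}. Then, for each block $B$ with $b=|V(B)|$ nodes and $e_B$ edges, test whether $B$ is a clique simply by checking whether $e_B = b(b-1)/2$. Finally output $V(B)$ for the clique-block of largest order, or an arbitrary edge (resp.\ node) if no block is a clique of order at least three. Because the blocks partition the edges of $G$ and $\sum_B |V(B)| \leq n+m$, all this bookkeeping runs in time ${\cal O}(n+m)$.

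The only real content is the structural claim that every block is a clique or triangle-free, which is immediate from Theorem~\ref{th:triangle} once one checks that a block on at least three nodes is an induced subgraph of $G$ with no 1-cutset. I expect this verification, together with the identity $\omega(G)=\max_B \omega(B)$, to be the crux of the argument; the running-time analysis is then routine, given a linear-time block decomposition and the fact that testing the clique property of a block reduces to comparing its number of edges with $b(b-1)/2$.
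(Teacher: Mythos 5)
Your proposal is correct and follows essentially the same route as the paper: decompose $G$ into its 2-connected components in linear time via Hopcroft--Tarjan, use Theorem~\ref{th:triangle} to conclude that each such component is a clique or triangle-free, and return the largest clique-component (or an edge). The extra details you supply (that a block on at least three nodes is induced and has no 1-cutset, the identity $\omega(G)=\max_B\omega(B)$, and the edge-count test for cliqueness) are correct elaborations of steps the paper leaves implicit.
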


\begin{proof}
  It is trivial to decide in linear time whether $\omega(G) = 1$. So,
  we assume $\omega(G) \geq 2$.  Then the 2-connected components of
  $G$ can be found in linear time (see~\cite{hopcroft.tarjan:447}
  and~\cite{tarjan:dfs}). By Theorem~\ref{th:triangle}, every
  2-connected component is either a clique or is triangle-free. So, to
  find a maximum clique it suffices for each 2-connected component to
  test whether it is a clique or not, and to output a largest such
  clique (if any). If no 2-connected component is a clique then output
  any edge.
\end{proof}

A \emph{2-subdivision} is a graph obtained from any graph by
subdividing twice every edge. More precisely, every edge $uv$ of a
graph $G$ is replaced by an induced path $uabv$ where $a$ and $b$ are
of degree two. Let $F$ be the resulting graph. It is easy to see that
$\alpha(F) = \alpha(G) + |E(G)|$. This construction, due to Poljak,
easily yields:

\begin{theorem}[Poljak \cite{poljak:74}]
  The problem whose instance is a 2-subdivision $G$ and an integer $k$
  and whose question is ``Does $G$ contain a stable set of size at
  least $k$'' is NP-complete. 
\end{theorem}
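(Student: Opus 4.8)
The plan is to prove NP-completeness in the usual two steps: membership in NP, followed by NP-hardness via a polynomial-time reduction from the maximum stable set problem on arbitrary graphs. Membership in NP is immediate, since a stable set of size at least $k$ is a certificate of size ${\cal O}(n)$ whose validity (pairwise non-adjacency of the listed nodes, together with the cardinality bound) can be checked in polynomial time.

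For hardness I would reduce from the classical NP-complete problem \textsc{Stable Set}, whose instance is an \emph{arbitrary} graph $G$ and an integer $k$, and whose question is whether $\alpha(G) \geq k$. Given such an instance, construct the $2$-subdivision $F$ of $G$; this is clearly a polynomial-time step, as $F$ has $|V(G)| + 2|E(G)|$ nodes and $3|E(G)|$ edges. Output the instance $(F, k + |E(G)|)$. Note that $F$ is by definition a $2$-subdivision, so $(F, k+|E(G)|)$ is a legitimate instance of the problem under consideration. Invoking the identity $\alpha(F) = \alpha(G) + |E(G)|$ recalled just before the statement, we get $\alpha(F) \geq k + |E(G)|$ if and only if $\alpha(G) \geq k$; that is, $F$ has a stable set of size at least $k+|E(G)|$ if and only if $G$ has a stable set of size at least $k$. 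This establishes the correctness of the reduction, and hence NP-hardness.

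The only genuinely non-trivial ingredient is the identity $\alpha(F)=\alpha(G)+|E(G)|$, which is exactly what the preceding paragraph of the excerpt supplies, so in the final write-up the reduction may simply quote it. For completeness I would nonetheless re-derive its two halves. The inequality $\alpha(F) \geq \alpha(G) + |E(G)|$ is the easy direction: starting from a maximum stable set of $G$, on each subdivided path $uabv$ one adds exactly one of the interior nodes $a,b$ that is not adjacent to a chosen endpoint, and these additions are mutually non-adjacent. The reverse inequality is the step I expect to be the main obstacle, because the two endpoints $u,v$ of a subdivided edge are \emph{non-adjacent} in $F$ and may both lie in a stable set of $F$, so the trace on $V(G)$ of a stable set of $F$ need not be stable in $G$. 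I would resolve this by a local exchange argument: from a maximum stable set $T$ of $F$, as long as some edge $uv$ of $G$ has both $u,v \in T$, maximality of $T$ forces neither interior node $a,b$ of that path to lie in $T$, so one may replace $u$ by $a$; this preserves stability and cardinality while strictly decreasing the number of edges of $G$ lying inside $T$. The process terminates at a maximum stable set whose trace on $V(G)$ is stable in $G$ and which uses at most one interior node per edge, giving $\alpha(F) \leq \alpha(G)+|E(G)|$. Combining the two bounds yields the identity, after which the reduction above closes the proof.
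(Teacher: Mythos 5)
Your proof is correct and follows exactly the route the paper intends: the paper simply states the identity $\alpha(F)=\alpha(G)+|E(G)|$ for the 2-subdivision $F$ and cites Poljak, and your reduction from \textsc{Stable Set} via $(F,\,k+|E(G)|)$ is precisely what ``easily yields'' refers to. Your additional derivation of the identity, including the exchange argument for the upper bound, is sound and merely fills in details the paper leaves to the reference.
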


Since every 2-subdivision is in $\cal C$, a direct consequence is:

\begin{theorem}
  Finding a maximum stable set of a graph in $\cal C$ is NP-hard.
\end{theorem}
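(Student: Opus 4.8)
The plan is to prove that finding a maximum stable set in $\cal C$ is NP-hard by a polynomial-time reduction from the maximum stable set problem on arbitrary graphs, using the 2-subdivision construction that has just been defined. The two ingredients are already available: the theorem attributed to Poljak, which tells us that computing $\alpha$ (equivalently, the decision version) on 2-subdivisions is NP-complete, and the observation that every 2-subdivision lies in $\cal C$. So the real content is to assemble these into a hardness statement.

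First I would verify the claim \emph{every 2-subdivision is in $\cal C$}, since the whole reduction rests on it. A 2-subdivision $F$ is obtained from a graph $G$ by replacing each edge $uv$ with an induced path $uabv$, where the two new internal nodes $a,b$ have degree two. I want to check that $F$ contains no cycle with a unique chord. The key structural fact is that in $F$ every node coming from $G$ may have arbitrary degree, but every newly inserted subdivision node has degree exactly two, and crucially each original edge is subdivided \emph{twice}, so along any path between two original nodes there are at least two consecutive degree-two nodes. Now suppose $C$ is a cycle in $F$ with exactly one chord $e$. A chord is an edge joining two nodes of $C$ that is not a cycle edge; its endpoints therefore each have degree at least three within $C\cup\{e\}$, hence degree at least three in $F$, hence both endpoints of $e$ must be original nodes of $G$. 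But any edge of $F$ with both endpoints original would have to be an original edge of $G$ that was \emph{not} subdivided, and in a 2-subdivision there are no such edges: every original edge becomes a path of length three. This contradiction shows no chord can exist, so in fact every cycle of $F$ is chordless, and $F\in\cal C$. (This mirrors exactly the argument given earlier for strongly 2-bipartite graphs: a chord forces two adjacent nodes of degree $\geq 3$, which 2-subdivisions forbid.)

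With that in hand the reduction is immediate. Given an instance of the general maximum-stable-set decision problem — a graph $G$ and an integer $k$ — I would construct its 2-subdivision $F$ in polynomial (indeed linear) time, set $k' = k + |E(G)|$, and ask whether $F$ has a stable set of size at least $k'$. By the identity $\alpha(F) = \alpha(G) + |E(G)|$ stated in the construction, $G$ has a stable set of size $\geq k$ if and only if $F$ has one of size $\geq k'$. Since $F\in\cal C$, a polynomial algorithm for maximum stable set on $\cal C$ would decide this instance, and since the general problem is NP-hard (equivalently, by Poljak's theorem the restricted problem on 2-subdivisions is already NP-complete), the problem on $\cal C$ is NP-hard.

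I do not expect a serious obstacle here; the statement is essentially a one-line corollary of the two preceding facts. The only place demanding genuine care is the verification that 2-subdivisions avoid cycles with a unique chord, and even that reduces to the degree argument above. One should be slightly careful to note that the inequality $\alpha(F)=\alpha(G)+|E(G)|$ is being used as a given from Poljak's construction, and that the membership $F\in\cal C$ is what lets us transport NP-hardness \emph{into} the class rather than merely restating Poljak's result; the combination is what yields the claim.
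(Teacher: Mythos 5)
Your proposal is correct and follows exactly the paper's route: the paper also derives the result as an immediate corollary of Poljak's theorem together with the observation that every 2-subdivision lies in $\cal C$ (the paper leaves that membership check implicit, whereas you spell out the degree argument showing a chord would need both endpoints of degree at least three, which no edge of a 2-subdivision has). Nothing is missing.
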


\section{NP-completeness of $\Pi_{H_{3|3}}$}
\label{s:bounds}

It is mentioned in Section~\ref{f:sgraph} that the problem
$\Pi_{H_{3|3}}$ is NP-complete. To prove this, we need a slight
variation on a theorem proved in \cite{leveque.lmt:detect} using a
refinement of a construction due to
Bienstock~\cite{bienstock:evenpair}.  We remind the reader that $I$
denotes the graph on nodes $a, b, c, d, e, f$ with the following
edges: $ab$, $ac$, $ad$, $be$, $bf$.

For any integer $k\geq 1$ we define the problem $\Pi_k$ whose instance
is described below:

\begin{enumerate}
\item
  a graph $G$ that does not contain $I$;
\item
  two nodes of degree two $x$ and $y$;  
\item
  \label{i:lengthk}
  and these are such that there exists a path $P_x$ (resp. $P_y$),
  on $2k-1$ nodes, all interior nodes of which are of degree 2,
  and $x$ (resp. $y$) is the middle of $P_x$ (resp. $P_y$).  Moreover,
  $P_x$ and $P_y$ are node-disjoint.  
\end{enumerate}

The question of $\Pi_k$ is ``does there exist an induced cycle of $G$
that goes through $x, y$?''.

\begin{theorem}[L\'ev\^eque, Lin, Maffray and Trotignon \cite{leveque.lmt:detect}]
  \mbox{} \label{th:linetal} 
  For all integers $k\geq 1$ the problem $\Pi_k$ is NP-complete.
\end{theorem}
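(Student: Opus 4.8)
The plan is to show $\Pi_k \in$ NP and then prove NP-hardness by a reduction from 3-SAT routed through the hole-detection construction of Bienstock \cite{bienstock:evenpair}, as refined in \cite{leveque.lmt:detect}. Membership in NP is immediate: an induced cycle of $G$ through $x$ and $y$ is a polynomial-size certificate, and one checks in polynomial time that a given vertex set induces a cycle containing $x$ and $y$; the side data ($x,y$ of degree two, and the node-disjoint paths $P_x, P_y$ on $2k-1$ vertices) are read off directly from the input.

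For hardness I would start from the fact, established by Bienstock's gadgets and made precise in \cite{leveque.lmt:detect}, that from a 3-SAT formula $\phi$ one can build in polynomial time a graph $B$ together with two designated vertices $s$ and $t$ such that $B$ has a hole through both $s$ and $t$ if and only if $\phi$ is satisfiable. The heart of that construction is a network of variable- and clause-gadgets through which any such hole is forced to travel, the chordlessness of the cycle being exactly what encodes a consistent satisfying assignment. As part of the reduction I would arrange that $s$ and $t$ have degree two: each pole is the single port by which the hole enters and leaves the network, so it is natural to present it with exactly two incident edges, and the hole through $s$ (resp. $t$) is thereby forced to use both of these edges.

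The decorations demanded by $\Pi_k$ are then produced by one subdivision step. Let $B'$ be obtained from $B$ by subdividing \emph{every} edge at least once, and in addition subdividing each of the two edges at $s$ (resp. $t$) enough times that $s$ (resp. $t$) becomes the exact middle of a chordless path $P_x$ (resp. $P_y$) on $2k-1$ vertices with all interior vertices of degree two; for $k=1$ this path is the single vertex $x$. Since every subdivision vertex has degree two, $B'$ is a subdivision of $B$, so no edge of $B'$ joins two vertices of degree at least three (one endpoint of each edge is always a subdivision vertex). As the two centres $a,b$ of an induced $I$ are adjacent and both of degree at least three, $B'$ contains no induced $I$. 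Subdivision vertices create no chords, so $B'$ has a hole through $x=s$ and $y=t$ if and only if $B$ has one through $s$ and $t$; and $P_x, P_y$ are node-disjoint because $s\neq t$. Hence $B'$ is a legitimate instance of $\Pi_k$, whose answer is ``yes'' precisely when $\phi$ is satisfiable, completing the reduction.

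The main obstacle is the first, imported, ingredient: producing a SAT-gadget whose holes through the two poles are in exact correspondence with the satisfying assignments, while keeping both poles of degree two. Simultaneously securing the degree-two poles (so that the subsequent subdivision yields the centred paths $P_x,P_y$) and ruling out spurious holes is the delicate part, and it is exactly the ``refinement'' of Bienstock's construction carried out in \cite{leveque.lmt:detect}. Once that is in place, the $I$-freeness and the path conditions follow cheaply from the uniform subdivision above, which neither destroys nor creates holes through the poles.
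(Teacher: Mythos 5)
There is a genuine flaw in the reduction, and it is fatal: the step ``let $B'$ be obtained from $B$ by subdividing \emph{every} edge at least once'' destroys the NP-hardness you are trying to transfer. After this operation every edge of $B'$ has at least one endpoint of degree two, so no cycle of $B'$ can have a chord (a chord incident to a degree-two vertex of the cycle would be a third edge at that vertex, and an edge with an endpoint off the cycle is not a chord). Hence \emph{every} cycle of $B'$ is induced, and ``does $B'$ have a hole through $x$ and $y$'' becomes ``does $B$ have \emph{any} cycle through $s$ and $t$'', i.e.\ two internally disjoint $s$--$t$ paths, which is decidable in polynomial time by flow techniques. The same subdivision that buys you $I$-freeness (your argument that the two adjacent degree-$\geq 3$ centres of $I$ cannot survive) is exactly what erases the chords whose presence encodes the consistency of a truth assignment in Bienstock-type gadgets. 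You cannot obtain $I$-freeness by wholesale subdivision; it has to be a property of the gadgets themselves, with some adjacent pairs of high-degree vertices carefully avoided rather than subdivided away. Your closing paragraph correctly identifies this as ``the delicate part'', but the construction you propose for the easy part contradicts it.

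For comparison, the paper does not re-run the reduction at all: it observes that the instances $G_{f}(k,1,1,1,3,1,1)$ already built in \cite{leveque.lmt:detect} to prove NP-completeness of the unconstrained problem $\Pi$ happen to satisfy Condition~\ref{i:lengthk} (the centred paths $P_x$, $P_y$ on $2k-1$ degree-two nodes) as well as $I$-freeness, so they are instances of $\Pi_k$ and the known equivalence with satisfiability carries over verbatim. In other words, both the $I$-freeness and the path condition are read off from the specific parametrized gadgets of the cited construction, not retrofitted by a graph operation. If you want a self-contained argument you would need to exhibit those gadgets and verify these two properties directly; any modification of the instance that touches all edges uniformly, as yours does, will either preserve the chords (and then $I$-freeness is not obtained for free) or remove them (and then the problem trivializes).
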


\begin{proof}
  The theorem is not stated this way in~\cite{leveque.lmt:detect}, so
  we explain here how to obtain a proof
  from~\cite{leveque.lmt:detect}.  In this article, Theorem~2.3 states
  that Problem~$\Pi$, that is Problem $\Pi_k$ where
  Condition~\ref{i:lengthk} is forgoten, is NP-complete.  The proof is
  obtained by a reduction from 3-SAT.  For every integer $k$ and every
  instance $f$ of 3-SAT, an instance denoted by $G_{f}(k, 1, 1, 1, 3,
  1, 1)$ of $\Pi$ is built.  It is shown that $f$ has a truth
  assignment if and only if $G_{f}(k, 1, 1, 1, 3, 1, 1)$ contains an
  induced cycle that goes through $x, y$.  It turns out that luckily
  $G_{f}(k, 1, 1, 1, 3, 1, 1)$ is an instance of $\Pi_k$ (even if
  unluckily this is not explicitly stated
  in~\cite{leveque.lmt:detect}), because $G_{f}(k, 1, 1, 1, 3, 1, 1)$
  satisfies Condition~\ref{i:lengthk}.
\end{proof}

\begin{theorem}
  \label{th:NPC}
  Problem $\Pi_{H_{3|3}}$ is NP-complete. 
\end{theorem}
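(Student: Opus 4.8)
The plan is to show membership in NP and then to reduce from the problem $\Pi_k$, which is NP-complete for every $k\ge 1$ by Theorem~\ref{th:linetal}; I would use $k=3$. Membership in NP is immediate, once one observes the right reformulation of the target problem: a realisation of $H_{3|3}$ is exactly an induced cycle $C$ equipped with a \emph{single} chord $e$ that splits $C$ into two arcs each of length at least $4$. Indeed, an arc of length $L\ge 4$ decomposes as $3+(L-3)$ with $L-3\ge 1$, recovering the forced length-$3$ subdivisions of $ab,ac$ and the free subdivisions of $bd,cd$; conversely any realisation has both arcs of length $\ge 3+1=4$. A candidate realisation is just such a vertex set, checkable in polynomial time, so $\Pi_{H_{3|3}}\in\mathrm{NP}$.

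For the hardness reduction, given an instance $(G,x,y)$ of $\Pi_3$ I would output $G'=G+xy$, the graph obtained by adding the single edge $xy$ (note $x,y$ are nonadjacent in $G$, being degree-two nodes sitting at the middles of the disjoint paths $P_x,P_y$). For the forward direction, suppose $G$ has an induced cycle $C$ through $x$ and $y$. Because $x$ has degree two and the interior of $P_x$ consists of degree-two nodes, $C$ is forced to contain the length-$4$ subpath of $P_x$ centred at $x$, and likewise the length-$4$ subpath of $P_y$ centred at $y$. Adding the chord $xy$ splits $C$ at $x$ and at $y$; each resulting arc then contains two forced edges on the $x$-side, two on the $y$-side, and at least one further connecting edge (the two forced subpaths are node-disjoint), so each arc has length at least $5\ge 4$. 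Since $C$ is induced in $G$, its only chord in $G'$ is $xy$, so $C$ together with $xy$ is a realisation of $H_{3|3}$.

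The backward direction is where the real work lies, and the key structural fact is the $I$-freeness of $G$. Suppose $G'$ contains a realisation $R$: an induced cycle $C'$ with unique chord $ad$ and both arcs of length at least $4$. Let $p_1,q_1$ be the two arc-neighbours of $a$ and $r,s$ those of $d$; all four are degree-two nodes of $R$, and since both arcs have length $\ge 4$ the six nodes $\{a,d,p_1,q_1,r,s\}$ are distinct and every non-edge among them is a pair at cyclic distance $\ge 2$. Hence these six nodes induce in $G'$ exactly the graph $I$ (edges $ad,ap_1,aq_1,dr,ds$). As $G$ is $I$-free, this induced copy of $I$ must use the added edge $xy$, so $xy\in\{ad,ap_1,aq_1,dr,ds\}$.

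It then remains to rule out every case except $xy=ad$, and this is the main obstacle I would concentrate on. If $xy$ were one of the four arc edges, then one endpoint of $xy$ — say $x$ — would be a branch node of $R$, hence of degree three in $R$. But $x$ has degree two in $G$, so its degree in $G'$ is three, with neighbours its two (degree-two, interior) $P_x$-neighbours together with $y$; since $R$ is induced and $x$ already has degree three in $R$, these three must be exactly the $R$-neighbours of $x$. This forces one of the interior degree-two nodes of $P_x$ to play the role of the other branch node of $R$, contradicting that a branch node has degree three. The same argument with the roles of $x,y$ interchanged handles the remaining subcases. Therefore $xy=ad$, and deleting this chord from $R$ leaves an induced cycle of $G$ through $x$ and $y$, so $(G,x,y)$ is a yes-instance of $\Pi_3$. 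The two directions give a polynomial reduction $\Pi_3\le_P\Pi_{H_{3|3}}$, and together with NP-membership this establishes that $\Pi_{H_{3|3}}$ is NP-complete.
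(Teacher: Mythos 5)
Your proof is correct and follows essentially the same route as the paper: a reduction from $\Pi_3$ obtained by adding the edge $xy$, with the key fact that every realisation of $H_{3|3}$ contains an induced $I$ while $G$ is $I$-free, forcing the chord of any realisation in $G'$ to be $xy$. The only difference is presentational: where the paper compresses the backward direction into the assertion that $G'$ contains a unique induced $I$ (on $N[x]\cup N[y]$), you carry out the equivalent degree-based case analysis explicitly.
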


\begin{proof}
  We use the fact that every realisation of ${H_{3|3}}$ contains $I$
  (note that this is false for ${H_{1|1}}$, ${H_{2|1}}$, ${H_{3|1}}$,
  ${H_{2|2}}$ and ${H_{3|2}}$).  Let $G, x, y$ be an instance of the
  NP-complete problem $\Pi_3$.  So, $G$ is a graph that does not
  contain $I$ and $x$, $y$ are two nodes of $G$ of degree two.

  We prepare now an instance $G'$ of $\Pi_{H_{3|3}}$ by adding an edge
  between $x, y$.  We note that $G'$ contains a unique $I$, induced by
  $N(x) \cup N(y)$.  So, any realisation of $H_{3|3}$ in $G'$ must
  contain $P_x$ and $P_y$.  It follows that $G$ contains a hole
  passing through $x, y$ if and only if $G'$ contains a realisation of
  ${H_{3|3}}$.
\end{proof}

\section*{Acknowledgement}

We are grateful to Vincent Limouzy for pointing to us that 1-joins can
be found in linear time.

\end{document}